\def\C{{\mathbb C}}
\def\R{{\mathbb R}}
\def\N{{\mathbb N}}
\def\T{{\mathbb T}}
\def\le{\leqslant}
\def\ge{\geqslant}
\newcommand{\im}{\mathrm{Im}}
\theoremstyle{plain}
\newtheorem{theorem}{Theorem}[section]
\newtheorem{lemma}[theorem]{Lemma}
\newtheorem{corollary}[theorem]{Corollary}
\newtheorem{proposition}[theorem]{Proposition}
\theoremstyle{definition}
\newtheorem{remark}[theorem]{Remark}
\newtheorem*{remark*}{Remark}
\numberwithin{equation}{section}
\begin{document}

\title[Gross-Pitaevskii model for exciton-polariton condensates]
{On a dissipative Gross-Pitaevskii-type model for exciton-polariton condensates}

\author[P. Antonelli]{Paolo Antonelli}

\author[P. Markowich]{Peter Markowich}

\author[R. Obermeyer]{Ryan Obermeyer}

\author[J. Sierra]{Jesus Sierra}

\author[C. Sparber]{Christof Sparber}

\address[P. Antonelli]
{Gran Sasso Science Institute, viale F. Crispi 7, 67100 L'Aquilla, Italy}
\email{paolo.antonelli@gssi.it}

\address[P. Markowich]
{CEMSE Division, King Abdullah University of Science and Technology, Box 4700, Thuwal 23955-6900, Saudi Arabia}
\email{peter.markowich@kaust.edu.sa}

\address[R.~Obermeyer]
{Department of Mathematics, Statistics, and Computer Science, M/C 249, University of Illinois at Chicago, 851 S. Morgan Street, Chicago, IL 60607, USA}
\email{roberm2@math.uic.edu}

\address[J. Sierra]
{Department of Information Engineering, Computer Science and Mathematics, University of L'Aquila, 67100 L'Aquila, Italy}
\email{jesus.sierra@gssi.it}

\address[C.~Sparber]
{Department of Mathematics, Statistics, and Computer Science, M/C 249, University of Illinois at Chicago, 851 S. Morgan Street, Chicago, IL 60607, USA}
\email{sparber@math.uic.edu}

\begin{abstract}
We study a generalized dissipative Gross-Pitaevskii-type model arising in the description of exciton-polariton condensates. We derive global in-time 
existence results and various a-priori estimates for this model posed on the one-dimensional torus. 
Moreover, we analyze in detail the long-time behavior of spatially homogenous solutions and their respective steady states and 
present numerical simulations in the case of more general initial data. We also study the convergence to the corresponding adiabatic 
regime, which results in a single damped-driven Gross-Pitaveskii equation. 
\end{abstract}

\date{\today}

\subjclass[2000]{35Q41, 35C99}
\keywords{Gross-Pitaevskii equation, exciton-polariton condensate, long time behavior, phase portrait, adiabatic regime}

\thanks{This publication is based on work supported by the NSF through grant no. DMS 1348092.
}
\maketitle

\section{Introduction}\label{sec:intro}

Exciton-polaritons are hybrid light and matter {\it quasi-particles} of bosonic type. They 
arise from the strong coupling of photons with the electromagnetic dipolar moment of excitons, i.e., electron-hole pairs in semiconductors; see \cite{BrMo} for a general introduction. 
The experimental realization of {\it Bose-Einstein condensates} (BECs) of such exciton-polaritons 
has triggered the emergence of an exciting field of physical and mathematical research; see, e.g., \cite{Kasp} for a description of 
experimental evidence of BEC in exciton-polaritons. In contrast to more classical BECs in 
ultracold atomic gases, exciton-polariton condensates can be produced at much higher temperatures, due to 
their lower effective mass. Being produced in semiconductor microcavities, exciton-polaritons are also of 
interest in that they provide an example of a BEC occurring in a solid-state system.

Moreover, this type of condensate has the crucial novelty of being an intrinsically {\it non-equilibrium system}. The latter is due to 
the finite lifetime of polaritons, which requires one to replenish the condensate continuously via 
optically injected high energy excitations. In turn, this implies that any (stable) stationary state results from a dynamical balance of pumping and losses.

To describe such systems, from a mathematical point of view, the simplest possible approach is based on a mean-field model of 
Gross-Pitaevskii type; cf. \cite{Car} for a broad overview. Such a model has been proposed in \cite{WoCa, WCC} and formally derived in \cite{HDTT} through a quantum-kinetic approach. 
It consists of a  generalized open-dissipative Gross-Pitaevskii equation for the macroscopic wave-function, $\psi$, of the polaritons, coupled to a simple 
rate equation for the exciton reservoir density, $n$. In one spatial dimension (valid for, e.g., micro-wires) and using non-dimensionalized units, 
the model reads as follows:
\begin{equation}\label{GP}
\left\{\begin{aligned}
&i\partial_t\psi=-\frac12\partial_x^2\psi+g|\psi|^2\psi+\lambda n\psi+\frac{i}{2}(Rn-\alpha)\psi, \\
&\varepsilon\partial_tn=P-(R|\psi|^2+\beta)n,
\end{aligned}\right.
\end{equation}
subject to initial data 
\begin{equation}\label{ini}
\psi|_{t=0}=\psi_0(x), \quad n|_{t=0}=n_0(x), \quad x\in \mathbb T.
\end{equation}
Above, $g>0$ denotes the strength of the (repulsive) self-interaction of the polaritons, $\lambda>0$ describes the coupling of the condensate with the reservoir, 
and $\beta, \alpha >0$ are the respective polariton and exciton loss rates. In actual experiments, one usually has $\beta \gg \alpha$, see \cite{Car}. In addition, $R>0$ is the rate of stimulated scattering from the reservoir to the condensate and $P>0$ is the exciton creation rate. For simplicity, the latter is assumed to be constant throughout the spatial 
domain, but the case of an $x$-dependent $P$ has also been considered, cf. \cite{BOM, CCFHKR}. Finally, we introduce a small dimensionless parameter 
$0<\varepsilon\le 1$ which in the limit $\varepsilon \to 0$ will allow us to derive an effective model in the {\it adiabatic regime}, see Section \ref{sec:adiabatic}.

In our analysis, we shall consider \eqref{GP} on the one-dimensional torus of length $|\mathbb T|$. This choice is not only mathematically convenient but also 
physically motivated by the fact that a stable condensate can only form in a spatially confined system. If, instead, we take $x\in \R$, an additional 
confining potential would need to be taken into account, which significantly complicates the mathematical analysis. We mention, however, that the restriction to 
one spatial dimension is purely for notational convenience, and that the majority of our results generalize in a straightforward way to dimensions two and three.

In the following, we shall be interested in deriving various analytical results for \eqref{GP}, in particular 
concerning existence and uniqueness of solutions, as well as their 
long time behavior. To gain more qualitative insight, we shall also perform several numerical simulations of the system \eqref{GP} and some of its simplifications. To be more precise, the rest of the paper is organized as follows: 

In Section \ref{sec:a-priori}, we start with a basic local in-time existence result for smooth solutions (the proof of this result is standard 
and can be found in Appendix \ref{sec:LWPH1}). We shall then derive several a-priori estimates which will allow us to conclude that these solutions indeed 
exist globally for $t\in [0, \infty)$. 
In Section \ref{sec:space}, we consider the particular case of spatially homogenous initial data. Under these circumstances, \eqref{GP} 
simplifies to a system of ordinary differential equations, which we shall analyze in detail. In particular, we explicitly determine the associated steady states and the qualitative behavior
of the solutions locally near to these equilibria. The case of more general initial data is then considered in Section \ref{sec:num}, where we shall perform several 
numerical simulations to determine the qualitative properties of solutions of \eqref{GP} and their respective long-time behavior. Finally, we shall 
study the limit $\varepsilon \to 0$ in Section \ref{sec:adiabatic}. In this limiting regime, the original system \eqref{GP} simplifies to a single damped-driven Gross-Pitaevskii 
equation for $\psi$. 

\subsection*{Acknowledgements}
The authors are grateful to the anonymous referee for helpful suggestions to improve upon an earlier version of this paper: In particular, we are grateful for 
pointing out the pointwise $L^\infty$-bound on $n$ (see Lemma \ref{lem:n_inf}) and for suggesting a Lyapunov-type functional similar to the one introduced in Proposition \ref{prop:energy}.


\section{Existence of smooth global in-time solutions}\label{sec:a-priori}

\subsection{Local in-time existence and basic a-priori estimates}

We start with the following result which establishes existence and uniqueness for smooth solutions of \eqref{GP}, locally in-time. 
The proof follows by a standard fixed point argument, which for the sake of completeness will be given in Appendix \ref{sec:LWPH1}.

\begin{proposition}\label{prop:local} 
Let $(\psi_0, n_0) \in \mathcal H^s(\T)$ for $s > 1/2$, where 
$$\mathcal{H}^s (\T):= H^s(\T) \oplus H^s(\T),\quad s\in \N.$$
 Then there exists a time $T_{\mathrm{max}} > 0$ 
and a unique local in-time solution $(\psi, n) \in C([0,T_{\mathrm{max}});\mathcal{H}^s(\T))$ of \eqref{GP}, depending continuously on the initial data. 
Furthermore, the solution is 
maximal in the sense that if $T_{\mathrm{max}} < +\infty$, then 
\begin{equation}\label{blow}
\lim_{t \rightarrow T_{\mathrm{max}}} \big (\lVert \psi(t, \cdot) \rVert_{{H}^s} + \lVert n(t, \cdot) \rVert_{{H}^s}\big)= +\infty.
\end{equation}
\end{proposition}

Classical arguments (see, e.g. \cite{Caz} for more details) imply that the existence time, $T_{\rm max}>0$, does not depend on the choice of Sobolev index $s>1/2$, i.e., 
we have {\it persistence of regularity} on the time-interval $[0, T_{\rm max})$.

For solutions $(\psi, n) \in C ([0, T_{\rm max}); \mathcal H^s(\T))$, with $s>1/2$, we define the {\it total mass}, $M(t)$, as the sum of the individual masses of the condensate and reservoir, the latter weighted by $\varepsilon\in (0,1]$, i.e.
\[ 
M(t)  = M_{\rm c}(t) + \varepsilon M_{\rm r}(t),
\]
where
\begin{equation}\label{M}
M_{\rm c}(t) = \int_{\T} | \psi(t,x)|^2 \, dx, \qquad M_{\rm r}(t) = \int_\T n(t,x) \, dx.
\end{equation}
Note that $M_r$ is well-defined, since by Cauchy-Schwarz
\[
\int_\T |n (t,x)| \, dx \le \lVert n (t, \cdot) \rVert_{L^2} \sqrt{|\T|} < \infty.
\]
That $n(t,x)$ can indeed be interpreted as a (positive) mass-density is guaranteed by the following lemma.

\begin{lemma}\label{lem:n_pos}
Let $(\psi, n) \in C ([0, T_{\rm max}); \mathcal H^1(\T))$ be a solution of \eqref{GP} with initial data $n_0(x)>0$. Then $n(t,x)>0$ for all $t\in\left[0,T_{\mathrm{max}}\right)$.
\end{lemma}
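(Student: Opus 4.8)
The plan is to exploit the fact that the second equation in \eqref{GP} is, for each \emph{fixed} $x\in\T$, a scalar linear first-order ODE in $t$, with the quantity $R|\psi(t,x)|^2+\beta$ playing the role of a given (nonnegative) coefficient. Since by hypothesis $(\psi,n)\in C([0,T_{\rm max});\mathcal H^1(\T))$ and, in one spatial dimension, $H^1(\T)\hookrightarrow C(\T)$, the map $(t,x)\mapsto |\psi(t,x)|^2$ is continuous and, on any compact subinterval $[0,t]\subset[0,T_{\rm max})$, bounded. This is exactly what is needed to solve the ODE pointwise in $x$ by an integrating-factor (Duhamel) argument and to read off the sign of $n$ directly.

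Concretely, for fixed $x$ I would introduce the integrating factor
\begin{equation*}
\mu(t,x)=\exp\left(\frac1\varepsilon\int_0^t\big(R|\psi(s,x)|^2+\beta\big)\,ds\right),
\end{equation*}
which is well-defined, finite, and strictly positive by the $L^\infty$-control just mentioned. Rewriting the reservoir equation as $\partial_t n+\tfrac1\varepsilon(R|\psi|^2+\beta)n=\tfrac{P}{\varepsilon}$ and multiplying by $\mu$ gives $\partial_t(\mu n)=\tfrac{P}{\varepsilon}\,\mu$, and integration in $t$ yields the representation formula
\begin{equation*}
n(t,x)=\frac{1}{\mu(t,x)}\left(n_0(x)+\frac{P}{\varepsilon}\int_0^t\mu(s,x)\,ds\right).
\end{equation*}
Since $\mu>0$, $n_0(x)>0$, $P>0$ and the integral term is nonnegative, every factor on the right-hand side is positive, so $n(t,x)>0$ for all $t\in[0,T_{\rm max})$ and all $x\in\T$. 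In fact one even obtains the quantitative lower bound $n(t,x)\ge n_0(x)/\mu(t,x)>0$.

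The step requiring the most care is the rigorous justification of this pointwise-in-$x$ reasoning: the solution is only assumed to lie in $\mathcal H^1(\T)$, so I must invoke the Sobolev embedding to make the pointwise evaluation $\psi(t,x)$, and hence the coefficient in the ODE, meaningful and integrable in $t$. Once that continuity and boundedness is in hand, the remaining computation is elementary. As an alternative to the explicit formula, one could argue by a maximum/comparison principle: if $n$ were to reach $0$ for the first time at some $(t_\ast,x_\ast)$, then at that point $\varepsilon\,\partial_t n=P-(R|\psi|^2+\beta)\cdot 0=P>0$, contradicting the fact that $n$ cannot be increasing where it attains a (first) zero from positive values; this gives the same conclusion but requires slightly more bookkeeping to control the infimum over $x$, so I would favour the representation formula above.
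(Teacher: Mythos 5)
Your proof is correct and is essentially identical to the paper's: both use the Sobolev embedding $H^1(\T)\hookrightarrow C(\T)$ to treat the reservoir equation as a pointwise-in-$x$ linear ODE and then read off positivity from the variation-of-constants (integrating-factor) formula, your representation being the same formula with the factor $1/\mu(t,x)$ distributed inside the integral. No gaps.
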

\begin{proof}
Sobolev imbedding implies that for $\psi \in C ([0, T_{\rm max}); \mathcal H^1(\T))$, we indeed have $\psi \in C([0, T_{\rm max})\times \T)$. 
In view of the second equation of \eqref{GP}, we therefore have that $n(t,x)$ is continuously differentiable with respect to $t$, uniformly in $x\in \T$. 
The result then follows directly from the fact that $P>0$ and the usual variation of constants formula:
\[
n(t,x) = n_0(x) e^{- \int_0^t \Gamma(\tau, x) \, d\tau} + \frac{P}{\varepsilon} \int_0^t e^{- \int_s^t \Gamma(\tau, x) \, d\tau}\, ds,
\]
where $\Gamma(t,x) = \left( R|\psi(t, x)|^2 +\beta \right)/\varepsilon$, for $\varepsilon>0$. 
\end{proof}

Next, we shall prove an a-priori bound on the total mass. Notice that this estimate and the ones to follow are uniform in $\varepsilon$.
\begin{lemma}\label{lem:mas}
Let $(\psi, n) \in C ([0, T_{\rm max}); \mathcal H^1(\T))$ be a solution of \eqref{GP}. Then, its total mass, $M(t)$, is uniformly bounded. More precisely, we 
have 
\[
M(t) \le e^{-\gamma t}\left(M(0) - \frac{P |\T|}{\gamma} \right) + \frac{P |\T|}{\gamma}, \quad \forall \, 0\le t < T_{\rm max}, \quad \varepsilon>0,
\] 
where $\gamma = \min \{\alpha, \beta\}$. In the case where $\alpha =\beta$ and $\varepsilon=1$, this estimate becomes an equality and thus, if $T_{\rm max} = +\infty$, we find
\[
\lim_{t\to +\infty} M(t) = \frac{P |\T|}{\beta}\equiv \frac{P |\T|}{\alpha}.
\]
\end{lemma}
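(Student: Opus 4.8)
The plan is to derive a single closed differential inequality for the total mass $M(t)$ by differentiating in time and using the structure of both equations in \eqref{GP}. First I would compute $\frac{d}{dt}M_{\rm c}(t)$. Differentiating $M_{\rm c}(t) = \int_\T |\psi|^2\,dx$ and substituting the first equation of \eqref{GP}, the real part of the standard Schr\"odinger-type terms $-\frac12\partial_x^2\psi$, $g|\psi|^2\psi$, and $\lambda n\psi$ all cancel (they contribute purely imaginary quantities to $\partial_t|\psi|^2 = 2\,\re(\bar\psi\,\partial_t\psi)$), after integrating the dispersive term by parts on the torus. The only surviving contribution comes from the dissipative/gain term $\frac{i}{2}(Rn-\alpha)\psi$, which yields
\[
\frac{d}{dt}M_{\rm c}(t) = \int_\T (Rn - \alpha)|\psi|^2\,dx = R\int_\T n|\psi|^2\,dx - \alpha M_{\rm c}(t).
\]

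Next I would compute $\varepsilon\frac{d}{dt}M_{\rm r}(t)$ by integrating the second equation of \eqref{GP} over $\T$:
\[
\varepsilon\frac{d}{dt}M_{\rm r}(t) = \int_\T \big(P - (R|\psi|^2+\beta)n\big)\,dx = P|\T| - R\int_\T n|\psi|^2\,dx - \beta M_{\rm r}(t).
\]
Adding the two identities, the crucial cross term $R\int_\T n|\psi|^2\,dx$ cancels exactly, leaving
\[
\frac{d}{dt}M(t) = P|\T| - \alpha M_{\rm c}(t) - \beta\,\varepsilon M_{\rm r}(t).
\]
Since $M_{\rm c}, M_{\rm r}\ge 0$ (the latter by Lemma \ref{lem:n_pos}, using $n_0>0$, which guarantees positivity), bounding $\alpha M_{\rm c}+\beta\varepsilon M_{\rm r} \ge \gamma(M_{\rm c}+\varepsilon M_{\rm r}) = \gamma M$ with $\gamma=\min\{\alpha,\beta\}$ gives the differential inequality $\frac{d}{dt}M(t) \le P|\T| - \gamma M(t)$.

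The final step is to integrate this inequality using Gr\"onwall's lemma (or the integrating factor $e^{\gamma t}$): writing $\frac{d}{dt}\big(e^{\gamma t}(M(t) - P|\T|/\gamma)\big) \le 0$ immediately yields the stated exponential bound. In the special case $\alpha=\beta$ and $\varepsilon=1$, one has $\gamma=\alpha=\beta$ and $\alpha M_{\rm c}+\beta M_{\rm r} = \gamma M$ exactly, so the inequality becomes the linear ODE $\frac{d}{dt}M = P|\T| - \gamma M$, whose explicit solution is the claimed equality; letting $t\to+\infty$ gives the stated limit. I expect the only genuine subtlety to be the justification that these formal manipulations are rigorous: one must check that the regularity $\mathcal H^1(\T)$ suffices to differentiate $M$ under the integral sign and to integrate $\partial_x^2\psi$ by parts without boundary terms. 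Both are routine on the torus at $H^1$ regularity (the dispersive term requires pairing $\partial_x^2\psi$ against $\bar\psi$, which after integration by parts gives $-\int|\partial_x\psi|^2$, a real quantity that drops out of the real part anyway), so the main ``obstacle'' is purely bookkeeping rather than conceptual.
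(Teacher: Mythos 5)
Your proof is correct and follows essentially the same route as the paper: the same two mass identities, the same cancellation of the cross term $R\int_\T n|\psi|^2\,dx$, and the same Gr\"onwall integration. If anything, you are slightly more careful than the paper in noting explicitly that the step $\alpha M_{\rm c}+\beta\varepsilon M_{\rm r}\ge\gamma M$ uses $n\ge 0$ (via Lemma \ref{lem:n_pos}), whereas the paper justifies the formal manipulations by a density argument with $H^3$ data rather than by duality pairings at the $H^1$ level.
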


\begin{proof} Below, we assume that the initial data is sufficiently smooth, say $\psi_0, n_0 \in H^3(\T)$. In view of Proposition \ref{prop:local}, this yields a 
solution $$(\psi, n)\in C([0, T_{\rm max});\mathcal H^3 (\T))\cap C^1((0, T_{\rm max});\mathcal H^1 (\T))$$ 
for which all subsequent computations are rigorously justified. Invoking 
a standard density argument (see, e.g., \cite{Tao}) combined with the continuous dependence on initial data (and the asserted persistence of regularity), 
we can conclude that the result holds for $\mathcal H^1$-solutions.

Multiplying the first equation in \eqref{GP} by $\overline{\psi}$, integrating over $\T$, and taking the real part, we obtain
\[
\frac{d}{dt}M_{\rm c}(t) = \int_\T (Rn-\alpha)|\psi|^2 dx.
\]
Similarly, integrating the second equation in \eqref{GP} over $\T$ gives
\[
\varepsilon\frac{d}{dt} M_{\rm r}(t) = \int_\T P - (R|\psi|^2+\beta)n \, dx.
\]
Therefore, we have
\begin{align*}
\frac{d}{dt}M(t) &= \int_\T (Rn-\alpha)|\psi|^2 dx + P - (R|\psi|^2+\beta)n \, dx \\
&= \int_\T P dx - \alpha \int_\T |\psi|^2 dx - \beta \int_\T n \, dx \\
&\le P |\T| - \gamma M(t),
\end{align*}
with $\gamma = \min \{\alpha, \beta\}$. Integrating in time, yields
\begin{align*}
M(t) &\le e^{-\gamma t}M(0) + P |\T| \int_0^t e^{-\gamma(t-s)} ds \\
&= e^{-\gamma t}\left(M(0) - \frac{P |\T|}{\gamma} \right) + \frac{P |\T|}{\gamma} .
\end{align*}
In the case where $\alpha =\beta$ and $\varepsilon=1$, we see that 
this inequality actually becomes an equality. 
\end{proof}

This uniform in-time estimate on $M(t)$ is not sufficient to conclude $T_{\rm max} = +\infty$. However, it shows that the only obstruction to global existence for 
solutions $(\psi, n)(t, \cdot) \in \mathcal H^1$ is the possibility that 
\begin{equation*}
\lim_{t \to {T_{\rm max}}} \big (\| \partial_x  \psi (t, \cdot )\|_{L^2} + \| \partial_x  n (t, \cdot )\|_{L^2} \big)=+\infty.
\end{equation*}
To rule out this scenario, we shall, in a first step, derive a point-wise estimate on the reservoir density $n(t,x)$ below.

\begin{lemma}\label{lem:n_inf}
Let $\left(\psi,n\right)\in C\left(\left[0,T_{\mathrm{max}}\right);\mathcal{H}^{1}\left(\mathbb{T}\right)\right)$
be a solution of \eqref{GP}. Then,
\begin{equation*}
n^{2}\left(t,\cdot\right)\le e^{-t\beta/\varepsilon}\left(n_{0}^{2}(\cdot)-\frac{P^{2}}{\beta^{2}}\right)+\frac{P^{2}}{\beta^{2}},\hspace*{1em}\forall \, 0\le t< T_{\mathrm{max}}.\label{eq:unif_b_n}
\end{equation*}
\end{lemma}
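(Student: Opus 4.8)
The plan is to regard the second equation of \eqref{GP} as a scalar ODE in $t$ at each fixed point $x\in\T$, and to derive from it a closed linear differential inequality for the quantity $n^2(t,x)$. As in the proof of Lemma \ref{lem:n_pos}, the one-dimensional Sobolev embedding $H^1(\T)\hookrightarrow C(\T)$ ensures that $\psi\in C([0,T_{\mathrm{max}})\times\T)$, so that $|\psi(t,x)|^2$ is continuous and $n(t,x)$ is $C^1$ in $t$, uniformly in $x$. This renders the pointwise manipulations below rigorous (as before, one may first argue for smooth data and then pass to $\mathcal H^1$-solutions by density and continuous dependence on the initial data).

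First I would multiply the reservoir equation by $2n/\varepsilon$ to obtain
\[
\partial_t(n^2)=\frac{2}{\varepsilon}\big(Pn-R|\psi|^2 n^2-\beta n^2\big).
\]
Since $R|\psi|^2 n^2\ge 0$, this term may simply be discarded, leaving $\partial_t(n^2)\le \frac{2}{\varepsilon}(Pn-\beta n^2)$. The key algebraic step is to absorb the linear forcing $Pn$ into $n^2$ via Young's inequality in the sharp form $2Pn\le \beta n^2+P^2/\beta$, which yields the autonomous inequality
\[
\partial_t(n^2)\le -\frac{\beta}{\varepsilon}\,n^2+\frac{P^2}{\varepsilon\beta}.
\]

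Finally I would integrate this pointwise in $x$ through Gronwall's lemma. Writing $y(t)=n^2(t,x)$, the bound $y'\le-(\beta/\varepsilon)y+P^2/(\varepsilon\beta)$ gives
\[
y(t)\le e^{-\beta t/\varepsilon}y(0)+\frac{P^2}{\varepsilon\beta}\int_0^t e^{-\beta(t-s)/\varepsilon}\,ds,
\]
and evaluating the elementary integral as $\tfrac{\varepsilon}{\beta}\big(1-e^{-\beta t/\varepsilon}\big)$ reproduces exactly the claimed estimate after regrouping the terms involving $P^2/\beta^2$. The computation is essentially routine, so I do not anticipate a serious obstacle; the only delicate point is the calibration of the weights in Young's inequality, which must split $2Pn$ precisely against $n^2$ and $P^2/\beta$ so that the resulting decay rate $\beta/\varepsilon$ and equilibrium value $P^2/\beta^2$ match the stated constants. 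The positivity $n>0$ from Lemma \ref{lem:n_pos} is reassuring but not strictly needed here, since both $R|\psi|^2 n^2\ge 0$ and $Pn\le P|n|$ hold irrespective of the sign of $n$.
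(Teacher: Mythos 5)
Your proposal is correct and follows essentially the same route as the paper: multiply the reservoir equation by $2n$, discard the nonnegative term $R|\psi|^2 n^2$, apply the Young-type inequality $2Pn\le \beta n^2+P^2/\beta$ (which the paper phrases as $(\tfrac{P}{\sqrt{\beta}}-\sqrt{\beta}\,n)^2\ge 0$), and integrate the resulting linear differential inequality with the integrating factor $e^{t\beta/\varepsilon}$. The constants and the final regrouping match the statement exactly.
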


\begin{proof} We again assume that the solution pair $\left(\psi,n\right)$
is sufficiently smooth to justify the computations below and then argue by density.
Multiplying the second equation in \eqref{GP} by $2n$ we obtain
\[
\varepsilon\partial_{t}n^{2}=2Pn-2\left(R\left|\psi\right|^{2}+\beta\right)n^{2}\le\frac{P^{2}}{\beta}-\beta n^{2},
\]
where the last inequality is a consequence of $\left(\frac{P}{\sqrt{\beta}}-\sqrt{\beta}n\right)^{2}\ge0$.
This implies that
\[
\varepsilon\partial_{t}\left(e^{t\beta/\varepsilon}n^{2}\right)\le e^{t\beta/\varepsilon}\frac{P^{2}}{\beta},
\]
and the result then follows directly by integrating this expression in time.
\end{proof}

In particular, this implies that $n(t, \cdot)\in L^{\infty}(\T)$ for all $t\in\left[0,T_{\mathrm{max}}\right)$, a fact we shall use in our energy estimates below. 
Note that this estimate for $n$ is {\it uniform} in $\varepsilon$ in the sense that $n(t, \cdot)$ remains bounded in $L^{\infty}(\T)$ in the limit $\varepsilon \to 0$.

\begin{remark} In addition, we infer that $n(t, \cdot)\in L^{p}(\T)$ for all $1\le p\le\infty$, $t\in\left[0,T_{\mathrm{max}}\right)$ and that
\[
\underset{t\rightarrow\infty}{\limsup} \, n(t,\cdot)\le\frac{P}{\beta},
\]
which should be compared with the result of Lemma \ref{lem:mas}.
\end{remark}


\subsection{Global smooth solutions} 
To obtain global in-time existence of smooth solutions, we consider the following {\it energy-type functional}:
\[
F\left(t\right):=E\left(t\right)+\frac{\varepsilon}{2}\int_{\mathbb{T}}\left(\partial_{x}\sqrt{n}\right)^{2}dx
-\frac{\lambda P}{R}\int_{\mathbb{T}}\ln n\, dx+\frac{\beta\lambda}{R}\int_{\mathbb{T}}n\, dx,
\]
where
\[
E\left(t\right):=\int_{\mathbb{T}}e (t,x)\, dx,
\]
with energy density
\[
e(t,x):=\frac{1}{2}\left|\partial_{x}\psi\right|^{2}+\frac{g}{2}\left|\psi\right|^{4}+\lambda n\left|\psi\right|^{2}.
\]

\begin{proposition}\label{prop:energy}
Let $(\psi, n) \in C ([0, T_{\rm max}); \mathcal H^1(\T))$ be a solution of \eqref{GP} such that $n(t, \cdot)>0$ for all $t\in [0, T_{\rm max})$. 
Then there exist non-negative constants, $C_1, C_2$, such that
\[
F\left(t\right)\le e^{C_{1}t}\left(F({0})+\frac{C_{2}}{C_{1}}\right)-\frac{C_{2}}{C_{1}}<\infty,
\]
for all $t\in\left[0,T_{max}\right)$. 
\end{proposition}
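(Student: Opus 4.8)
The plan is to differentiate $F$ in time, exploit two algebraic cancellations to discard every genuinely dangerous term, bound what remains by $C_1F+C_2$, and close with Gronwall's lemma. As in the proofs of Lemma \ref{lem:mas} and Lemma \ref{lem:n_inf}, I would first carry out all computations for smooth solutions (say with data in $H^3$) and then pass to $\mathcal H^1$-solutions by density together with continuous dependence on the data. The standing hypothesis $n(t,\cdot)>0$ is what guarantees that $\sqrt n$, $\ln n$ and $1/n$ stay well defined along the flow.

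First I would compute $\frac{d}{dt}E$. Writing the first equation of \eqref{GP} as $\partial_t\psi=-iH\psi+\tfrac12(Rn-\alpha)\psi$ with $H\psi:=-\tfrac12\partial_x^2\psi+g|\psi|^2\psi+\lambda n\psi$, differentiating $E$ under the integral, integrating the kinetic term by parts on $\T$, and substituting the equation, the conservative part drops out because $\re(-i|H\psi|^2)=0$, leaving
\[
\frac{d}{dt}E=-\tfrac12\int_{\T}(Rn-\alpha)\re(\overline\psi\,\partial_x^2\psi)\,dx+\int_{\T}(Rn-\alpha)\big(g|\psi|^4+\lambda n|\psi|^2\big)\,dx+\lambda\int_{\T}\partial_t n\,|\psi|^2\,dx .
\]
Then I would differentiate the three reservoir terms, each time substituting $\partial_t n=\tfrac1\varepsilon\big(P-(R|\psi|^2+\beta)n\big)=:\tfrac1\varepsilon G$ from the second equation of \eqref{GP}; note that the $\varepsilon$ in front of the quantum-pressure term $\tfrac\varepsilon2\int(\partial_x\sqrt n)^2$ cancels against the $\tfrac1\varepsilon$ produced by $\partial_t\sqrt n$, so that term contributes $-\tfrac12\int\tfrac{\partial_x^2\sqrt n}{\sqrt n}\,G\,dx$.

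The heart of the proof is two cancellations. The first is that all terms carrying an explicit factor $\tfrac1\varepsilon$ — namely $\lambda\int\partial_t n|\psi|^2$ together with the derivatives of $-\tfrac{\lambda P}{R}\int\ln n$ and $\tfrac{\beta\lambda}{R}\int n$ — sum, using the identity $|\psi|^2-\tfrac{P}{Rn}+\tfrac{\beta}{R}=-\tfrac{G}{Rn}$, to the single dissipative term $-\tfrac{\lambda}{R\varepsilon}\int_{\T}\tfrac{G^2}{n}\,dx\le0$, which can be discarded. The second is that the quantum-pressure term was chosen precisely so that $\tfrac R2\int\sqrt n\,\partial_x^2\sqrt n\,|\psi|^2$ (coming from differentiating $\tfrac\varepsilon2\int(\partial_x\sqrt n)^2$) produces, after integration by parts, a cross term $-\tfrac R2\int\partial_x n\,\re(\overline\psi\partial_x\psi)$ that exactly cancels the cross term $+\tfrac R2\int\partial_x n\,\re(\overline\psi\partial_x\psi)$ obtained by integrating the $\re(\overline\psi\partial_x^2\psi)$ contribution in $\frac{d}{dt}E$ by parts. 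I expect this second cancellation — and recognising that $\tfrac\varepsilon2\int(\partial_x\sqrt n)^2$ is the right object to add to $E$ — to be the main obstacle: without it the gradient cross term can only be estimated by $\|\psi\|_{L^\infty}^2\int(\partial_x\sqrt n)^2$, i.e. \emph{quadratically} in $F$, which would ruin the linear Gronwall bound.

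After these cancellations every surviving non-positive term (the dissipation above, together with $-\tfrac P2\int\tfrac{(\partial_x\sqrt n)^2}{n}$, $-\tfrac\beta2\int(\partial_x\sqrt n)^2$ and $-\tfrac R2\int(\partial_x\sqrt n)^2|\psi|^2$) is discarded, leaving
\[
\frac{d}{dt}F\le\tfrac12\int_{\T}(Rn-\alpha)|\partial_x\psi|^2\,dx+\int_{\T}(Rn-\alpha)\big(g|\psi|^4+\lambda n|\psi|^2\big)\,dx .
\]
Here I would invoke the uniform-in-time bound $\|n(t,\cdot)\|_{L^\infty}\le\bar n:=\max\{\|n_0\|_{L^\infty},P/\beta\}$ from Lemma \ref{lem:n_inf}, so that $|Rn-\alpha|\le R\bar n+\alpha=:c_0$ and the right-hand side is $\le 2c_0E$. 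Finally, since $\tfrac\varepsilon2\int(\partial_x\sqrt n)^2\ge0$, $\tfrac{\beta\lambda}{R}\int n\ge0$, and $\tfrac{\lambda P}{R}\int\ln n\le\tfrac{\lambda P}{R}|\T|\max\{\ln\bar n,0\}=:C_0$, one has $E\le F+C_0$ with $C_0\ge0$; hence $\frac{d}{dt}F\le C_1F+C_2$ with $C_1=2c_0>0$ and $C_2=2c_0C_0\ge0$. Gronwall's inequality applied to $F+C_2/C_1$ then gives exactly the stated bound, which is finite for every $t\in[0,T_{\max})$.
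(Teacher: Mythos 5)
Your proof is correct and follows essentially the same route as the paper's: the same two cancellations (the $1/\varepsilon$-terms collapsing to $-\tfrac{\lambda\varepsilon}{R}\int(\partial_t n)^2/n$, and the gradient cross term absorbed by $\tfrac{\varepsilon}{2}\int(\partial_x\sqrt n)^2$), the same sign bookkeeping, the $L^\infty$-bound on $n$ from Lemma \ref{lem:n_inf}, and Gronwall. The only cosmetic difference is that you obtain the second cancellation by differentiating $\int(\partial_x\sqrt n)^2$ in time directly, whereas the paper differentiates the reservoir equation in $x$ and works with $\partial_t\bigl((\partial_x n)^2/n\bigr)$ --- the same algebra in a different order.
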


\begin{proof} As before, we first consider sufficiently smooth solutions $\psi, n$ and then argue by density to extend our computations to 
solutions $(\psi, n) (t, \cdot)\in \mathcal H^1(\T)$. Differentiating $E(t)$ and using \eqref{GP}, we obtain, after some straightforward computations 
\begin{align*}
\frac{d}{dt}E\left(t\right)= & \int_{\mathbb{T}}\left(Rn-\alpha\right)e(t,x)\, dx-\frac{1}{4}\int_\T \left(Rn-\alpha\right)\partial_{x}^{2}\left|\psi\right|^{2}dx+\frac{g}{2}\int(Rn-\alpha)|\psi|^4\,dx\nonumber \\
 & +\frac{\lambda}{\varepsilon}\int_{\mathbb{T}}\left(P-\left(R\left|\psi\right|^{2}+\beta\right)n\right)\left|\psi\right|^{2}dx\nonumber \\
= & \int_{\mathbb{T}}\left(Rn-\alpha\right)e\left(t,x\right)dx\nonumber + \frac{R}{4}\int_{\mathbb{T}}\left(\partial_{x}n\right)\partial_{x}\left|\psi\right|^{2}dx +\frac{g}{2}\int(Rn-\alpha)|\psi|^4\,dx \\
 & +\frac{\lambda}{\varepsilon}\int_{\mathbb{T}}\left(P-\left(R\left|\psi\right|^{2}+\beta\right)n\right)\left|\psi\right|^{2}dx,
\end{align*}
where the second equality follows from integration by parts and the fact that $\int_\T \partial_x^2 |\psi|^2 dx = 0$.
Differentiating the second equation in \eqref{GP} w.r.t. $x$, yields
\[
\varepsilon\partial_{t}\partial_{x}n=-\left(R\left|\psi\right|^{2}+\beta\right)\partial_{x}n-Rn\partial_{x}\left|\psi\right|^{2},
\]
which directly implies that 
\[
\varepsilon\partial_{t}\left(\partial_{x}n\right)^{2}=-2\left(R\left|\psi\right|^{2}+\beta\right)\left(\partial_{x}n\right)^{2}-2Rn\partial_{x}\left|\psi\right|^{2}\left(\partial_{x}n\right).
\]
Using this identity and the fact that $n>0$ by assumption,
we can compute 
\begin{align*}
\varepsilon\partial_{t}\frac{\left(\partial_{x}n\right)^{2}}{n}= & -\varepsilon\left(\partial_{t}n\right)\frac{\left(\partial_{x}n\right)^{2}}{n^{2}}+\frac{\varepsilon}{n}\partial_{t}\left(\partial_{x}n\right)^{2}\\
= & -P\frac{\left(\partial_{x}n\right)^{2}}{n^{2}}-\left(R\left|\psi\right|^{2}+\beta\right)n\frac{\left(\partial_{x}n\right)^{2}}{n^{2}}-2R\partial_{x}\left|\psi\right|^{2}\left(\partial_{x}n\right).
\end{align*}
Therefore,
\[
\frac{R}{4}\partial_{x}\left|\psi\right|^{2}\left(\partial_{x}n\right)=-\frac{P}{8}\frac{\left(\partial_{x}n\right)^{2}}{n^{2}}-\frac{1}{8}\left(R\left|\psi\right|^{2}+\beta\right)n\frac{\left(\partial_{x}n\right)^{2}}{n^{2}}-\frac{\varepsilon}{8}\partial_{t}\frac{\left(\partial_{x}n\right)^{2}}{n}.
\]
Plugging this into the expression of the time derivative of $E(t)$ obtained above, and keeping in mind 
that $\left(\partial_{x}n\right)^{2}=4n\left(\partial_{x}\sqrt{n}\right)^{2}$,
we find the following identity:
\begin{align*}
& \frac{d}{dt}\left(E\left(t\right)+\frac{\varepsilon}{2}\int_{\mathbb{T}}\left(\partial_{x}\sqrt{n}\right)^{2}dx\right)=  
\int_{\mathbb{T}}\left(Rn-\alpha\right)e (t,x)\, dx \, +\frac{g}{2}\int(Rn-\alpha)|\psi|^4\,dx \\
 & \, +\frac{\lambda}{\varepsilon}\int_{\mathbb{T}}\left(P-\left(R\left|\psi\right|^{2}+\beta\right)n\right)\left|\psi\right|^{2}\, dx
  -\frac{1}{8}\int_{\mathbb{T}}\left(P+\left(R\left|\psi\right|^{2}+\beta\right)n\right)\frac{\left(\partial_{x}n\right)^{2}}{n^{2}}\, dx.
\end{align*}
On the other hand, 
\[
\frac{\lambda}{\varepsilon}\int_{\mathbb{T}}\left(P-\left(R\left|\psi\right|^{2}+\beta\right)n\right)\left|\psi\right|^{2}dx=  \lambda\int_{\mathbb{T}}\left(\partial_{t}n\right)\left|\psi\right|^{2}dx,
\]
in view of the second equation in \eqref{GP}. The latter also implies that
\[
\left|\psi\right|^{2}= \frac{1}{R}\left(\frac{-\varepsilon\partial_{t}n+P}{n}-\beta\right),
\]
and thus, we can rewrite
\begin{align*}
& \frac{\lambda}{\varepsilon}\int_{\mathbb{T}}\left(P-\left(R\left|\psi\right|^{2}+\beta\right)n\right)\left|\psi\right|^{2}dx= \\
 & = -\frac{\varepsilon\lambda}{R}\int_{\mathbb{T}}\frac{\left(\partial_{t}n\right)^{2}}{n}\, dx+\frac{\lambda P}{R}\int_{\mathbb{T}}\frac{\partial_{t}n}{n}\, dx -\frac{\beta\lambda}{R}\int_{\mathbb{T}}\partial_{t}n \, dx\\
 & = -\frac{\varepsilon\lambda}{R}\int_{\mathbb{T}}\frac{\left(\partial_{t}n\right)^{2}}{n}\, dx+\frac{\lambda P}{R}\frac{d}{dt}\int_{\mathbb{T}}\ln n\, dx -\frac{\beta\lambda}{R}\frac{d}{dt}\int_{\mathbb{T}}n \, dx.
\end{align*}
In summary, this yields
\begin{align*}
& \frac{d}{dt}\left(E\left(t\right)+\frac{\varepsilon}{2}\int_{\mathbb{T}}\left(\partial_{x}\sqrt{n}\right)^{2} dx-\frac{\lambda P}{R}\int_{\mathbb{T}}\ln n \, dx+\frac{\beta\lambda}{R}\int_{\mathbb{T}}n \, dx\right)\\
& =\int_{\mathbb{T}}\left(Rn-\alpha\right)e(t,x)dx-\frac{\varepsilon\lambda}{R}\int_{\mathbb{T}}\frac{\left(\partial_{t}n\right)^{2}}{n}dx
-\frac{1}{8}\int_{\mathbb{T}}\left(P+\left(R\left|\psi\right|^{2}+\beta\right)n\right)\frac{\left(\partial_{x}n\right)^{2}}{n^{2}}dx\\
& +\frac{g}{2}\int(Rn-\alpha)|\psi|^4\,dx \le 2R \int_{\mathbb T} n e(t,x) \, dx,
\end{align*}
since all other terms on the right hand side are non-positive. 
Having in mind the definition of $F(t)$ and using the fact that $n(t, \cdot) \in L^{\infty}(\T)$, cf. Lemma \ref{lem:n_inf}, this implies 
\[
\frac{d}{dt}F\left(t\right) \le 2R \| n(t, \cdot) \|_{L^\infty} \, E(t)\le C_1 \left(F(t) + \frac{\lambda P}{R}\int_{\mathbb T}( \ln n)_+ \, dx\right), 
\]
where $( \ln n)_+= \max\{\ln n, 0\}\le n$. Using the $L^\infty$-bound on $n$ one more time, then allows us to bound
\[
\frac{d}{dt}F\left(t\right)\le C_{1}F\left(t\right) +C_2,
\]
where $C_2=C_2(\| n(t, \cdot) \|_{L^\infty})\ge0$. 
Integrating this last inequality with respect to time then gives the asserted result.
\end{proof}

The exponential bound obtained for $F(t)$ is most likely far from optimal. Nevertheless, it is sufficient to conclude global in-time existence:

\begin{theorem}\label{thm:global}
Let $(\psi_0,n_0)^\top \in \mathcal{H}^1$, with $n_0>0$. Then there exists a unique global in-time solution 
$(\psi, n) \in C ([0, \infty); \mathcal H^1(\T))$ of the system \eqref{GP}. In addition, its total mass, $M(t)$, 
is uniformly bounded for all $t\ge 0$.
\end{theorem}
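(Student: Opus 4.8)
The plan is to combine the local theory of Proposition \ref{prop:local} with the a-priori estimates established above, via the standard continuation argument: if $\|\psi(t,\cdot)\|_{H^1}+\|n(t,\cdot)\|_{H^1}$ can be shown to stay finite on every bounded time interval, then the blow-up alternative \eqref{blow} forces $T_{\mathrm{max}}=+\infty$. First I would apply Proposition \ref{prop:local} with $s=1$ to obtain a unique maximal solution $(\psi,n)\in C([0,T_{\mathrm{max}});\mathcal H^1(\T))$, depending continuously on the data. Since $n_0>0$, Lemma \ref{lem:n_pos} guarantees $n(t,x)>0$ on $[0,T_{\mathrm{max}})$, so that $F(t)$ is well-defined and Proposition \ref{prop:energy} applies, giving $F(t)\le e^{C_1t}(F(0)+C_2/C_1)-C_2/C_1$ with fixed constants $C_1,C_2\ge 0$. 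Arguing by contradiction, I assume $T_{\mathrm{max}}<\infty$; then on the whole interval the right-hand side is bounded by $e^{C_1T_{\mathrm{max}}}(F(0)+C_2/C_1)$, so that $\sup_{[0,T_{\mathrm{max}})}F(t)<\infty$.

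The central step is to convert this bound on $F$ into a bound on the full $\mathcal H^1$-norm. Recalling the definitions of $F$ and $E$, and using $g>0$, $\lambda>0$, $n>0$ so that all three summands in the energy density $e(t,x)$ are non-negative, I would rearrange the definition of $F$ as
\[
E(t)+\frac{\varepsilon}{2}\int_{\T}\left(\partial_x\sqrt n\right)^2 dx
= F(t)+\frac{\lambda P}{R}\int_{\T}\ln n\, dx-\frac{\beta\lambda}{R}\int_{\T}n\, dx.
\]
The last term is non-positive, while the logarithmic term is controlled from above via $\ln n\le n$ together with the mass bound of Lemma \ref{lem:mas}, since $\int_\T\ln n\, dx\le\int_\T n\, dx=M_{\rm r}(t)\le C$. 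Consequently both $E(t)$ and $\|\partial_x\sqrt n\|_{L^2}^2$ are bounded on $[0,T_{\mathrm{max}})$ (for the fixed value $\varepsilon>0$).

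From $\tfrac12\|\partial_x\psi\|_{L^2}^2\le E(t)$ and $\|\psi\|_{L^2}^2=M_{\rm c}(t)\le C$ I then obtain a uniform bound on $\|\psi(t,\cdot)\|_{H^1}$. For the reservoir density I would exploit the pointwise identity $(\partial_x n)^2=4n(\partial_x\sqrt n)^2$, already used in Proposition \ref{prop:energy}, together with the $L^\infty$-bound of Lemma \ref{lem:n_inf}, to estimate
\[
\|\partial_x n\|_{L^2}^2\le 4\,\|n(t,\cdot)\|_{L^\infty}\,\|\partial_x\sqrt n\|_{L^2}^2\le C,
\qquad
\|n\|_{L^2}^2\le\|n(t,\cdot)\|_{L^\infty}^2\,|\T|\le C,
\]
which together bound $\|n(t,\cdot)\|_{H^1}$. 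Hence $\|\psi(t,\cdot)\|_{H^1}+\|n(t,\cdot)\|_{H^1}$ stays finite as $t\to T_{\mathrm{max}}$, contradicting \eqref{blow}; therefore $T_{\mathrm{max}}=+\infty$ and the solution is global. Uniqueness and continuous dependence are inherited directly from Proposition \ref{prop:local}, and the asserted uniform-in-time bound on $M(t)$ is precisely Lemma \ref{lem:mas}.

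I expect the only delicate point to be the handling of the logarithmic contribution: one must ensure it is bounded from below in $F$, equivalently that $\int_\T\ln n\, dx$ is bounded from above, which is exactly where the strict positivity of $n$ (from Lemma \ref{lem:n_pos}) and the combined $L^1$/$L^\infty$ control of $n$ (from Lemmas \ref{lem:mas} and \ref{lem:n_inf}) enter in an essential way. Once this sign issue is resolved, the remaining estimates are routine bookkeeping with the non-negative energy terms.
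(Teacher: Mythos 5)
Your proposal is correct and follows essentially the same route as the paper: local existence at $s=1$, positivity of $n$ via Lemma \ref{lem:n_pos}, the bound on $F$ from Proposition \ref{prop:energy}, control of $E$ and $\|\partial_x\sqrt n\|_{L^2}$ by absorbing the $\ln n$ term through $(\ln n)_+\le n$ together with the $L^1$/$L^\infty$ bounds on $n$, the identity $(\partial_x n)^2=4n(\partial_x\sqrt n)^2$ for $\|\partial_x n\|_{L^2}$, and the blow-up alternative to conclude $T_{\mathrm{max}}=+\infty$. The only cosmetic differences (arguing by contradiction rather than directly, and using the $L^1$ rather than the $L^\infty$ bound to control $\int_\T\ln n\,dx$) do not change the substance.
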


\begin{proof}
From Proposition \ref{prop:local}, we know that for $(\psi_0,n_0) \in \mathcal{H}^1$, we obtain a 
unique maximal solution in $\mathcal H^1$ obeying the blow-up alternative \eqref{blow}. 
Recall that, in view of Lemma \ref{lem:mas}, we have a uniform bound on both $\| \psi(t, \cdot)\|_{L^2}$ and $\| n(t, \cdot)\|_{L^2}$, 
and thus it only remains to control the derivative of both $\psi$ and $n$ w.r.t. $x$ in $L^2(\mathbb T)$.

To this end, Lemma \ref{lem:n_pos} ensures $n(t,x)>0$ for all $t\in\left[0,T_{\mathrm{max}}\right)$, 
and thus we can apply Proposition \ref{prop:energy} to conclude that $F(t)$ remains bounded for all $t\in\left[0,T_{\mathrm{max}}\right)$.  
Together with the $L^\infty$-bound on $n$ established in Lemma \ref{lem:n_inf} this implies, that 
\[
E(t) \le F(t) + C_2 \le {\rm const},\quad \forall t\in\left[0,T_{\mathrm{max}}\right),
\]
and since $g, \lambda >0$ we infer that $\| \psi(t, \cdot)\|_{H^1}$ is bounded for all $t\in\left[0,T_{\mathrm{max}}\right)$.

In addition, the fact that $n>0$ allows us to bound
\[
\int_\T (\partial_x n)^2 \, dx  \le  \| n \|_{L^\infty} \int_\T \frac{(\partial_x n)^2 }{n} \, dx =  4 \| n \|_{L^\infty} \int_\T (\partial_x\sqrt{ n})^2 \, dx \le {\rm const.},
\]
in view of Proposition \ref{prop:energy}. 

Continuity then implies that the $H^1$-norm of both $\psi$ and $n$ remain bounded as $t\to T_{\rm max}$. In turn this yields $T_{\rm max}=+\infty$, for otherwise 
we would have a contradiction to the maximality of $T_{\rm max}$. 
\end{proof}

\begin{remark}
As mentioned before, the energy estimate obtained in Proposition \ref{prop:energy} is far from optimal. 
In particular, it is not strong enough to study the existence of a global attractor of the system \eqref{GP}. 
We are currently investigating the possibility of applying local smoothing methods to obtain 
the uniform energy estimates needed in this case. This approach has been successfully used in, e.g., \cite{Com, Com2, EMNT, ET}.
\end{remark}

In the next section, we shall obtain a qualitative insight into the solutions of \eqref{GP} in the particular case of $x$-independent initial data.


\section{The case of space-homogenous solutions} \label{sec:space} 

\subsection{Asymptotic behavior of spatially homogenous solutions} 
In this section, we study the long-time behavior of solutions of \eqref{GP} with $\varepsilon=1$ and in the case of {\it spatially homogenous} initial data. 
To this end, it is convenient to 
rewrite \eqref{GP} into its fluid-dynamical form, using $\psi=\sqrt{\rho} e^{i \phi}$. In this way, one formally obtains
\begin{equation}\label{QHD}
\left\{\begin{aligned}
&\partial_t\rho+\partial_x(\rho\partial_x\phi)=(Rn-\alpha)\rho,\\
&\partial_t\phi+\frac12(\partial_x \phi)^2+g\rho+\lambda n=\frac12\frac{\partial_x^2\sqrt{\rho}}{\sqrt{\rho}},\\
&\partial_tn=P-(R\rho+\beta)n.
\end{aligned}\right.
\end{equation}
For solutions which are $x$-independent, this Euler-type model simplifies considerably. Indeed, we obtain the following 
coupled system of ordinary differential equations for the condensate and reservoir densities:
\begin{equation}\label{eq:ode}
\left\{\begin{aligned}
\dot\rho=&(Rn-\alpha)\rho, \\
\dot n=&P-(R\rho+\beta)n,
\end{aligned}\right.
\end{equation}
subject to initial data 
\[
\rho|_{t=0} = \rho_0>0, \quad n|_{t=0} = n_0>0.
\]
When deriving the system \eqref{QHD} by means of the WKB ansatz $\psi=\sqrt{\rho}e^{i\phi}$, one usually faces the obstacle of 
possible vacuum regions. However, here we only consider spatially homogeneous solutions, so \eqref{eq:ode} is indeed 
completely justified and equivalent to \eqref{GP}.
\begin{lemma}
For any $\rho_0, n_0>0$, there exists a unique $(\rho, n)\in C^1([0, \infty), \R^2_+)$, solution of \eqref{eq:ode}, satisfying $\rho(t)>0$, $n(t)>0$, for all $t\ge0$.
\end{lemma}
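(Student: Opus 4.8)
The plan is to prove existence, uniqueness, positivity, and global-in-time extension for the planar system \eqref{eq:ode}. First I would invoke the standard Picard--Lindel\"of theorem: the right-hand side $(f_1, f_2) = ((Rn-\alpha)\rho,\, P-(R\rho+\beta)n)$ is a polynomial in $(\rho, n)$, hence smooth and locally Lipschitz on all of $\R^2$. This yields a unique local $C^1$-solution on some maximal interval $[0, T_{\max})$ with $T_{\max} > 0$. It remains only to show that the solution stays in the open positive quadrant $\R^2_+$ and that $T_{\max} = +\infty$.

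For positivity, the first equation is linear in $\rho$ once $n(t)$ is regarded as a given continuous function, so the variation-of-constants formula gives
\[
\rho(t) = \rho_0 \exp\!\left(\int_0^t (Rn(s)-\alpha)\, ds\right) > 0,
\]
preserving the sign of $\rho_0 > 0$ for as long as the solution exists. For $n$, I would argue exactly as in Lemma \ref{lem:n_pos}: writing $\Gamma(t) = R\rho(t)+\beta$, the representation
\[
n(t) = n_0\, e^{-\int_0^t \Gamma(s)\, ds} + P \int_0^t e^{-\int_s^t \Gamma(\tau)\, d\tau}\, ds
\]
shows $n(t) > 0$ since $n_0 > 0$ and $P > 0$. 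Thus $(\rho, n)(t) \in \R^2_+$ throughout $[0, T_{\max})$.

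The remaining point is to rule out finite-time blow-up, i.e.\ to establish $T_{\max} = +\infty$. Here I would exploit that the spatially homogeneous setting is the special case $\varepsilon = 1$, $\psi = \sqrt{\rho}$ constant in $x$ (so $\rho = |\psi|^2$, $M_{\rm c} = \rho |\T|$), of the full system \eqref{GP}, for which all the a-priori bounds already proved apply. Concretely, Lemma \ref{lem:n_inf} gives a pointwise, $\varepsilon$-uniform bound on $n$, yielding $n(t) \le \max\{n_0, P/\beta\}$, while Lemma \ref{lem:mas} bounds the total mass $M(t) = \rho(t)|\T| + n(t)|\T|$ uniformly in time, whence $\rho(t)$ itself is bounded. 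Having both $\rho$ and $n$ confined to a fixed compact subset of $\R^2_+$ for all $t < T_{\max}$, the standard continuation criterion for ODEs forbids blow-up, forcing $T_{\max} = +\infty$. Alternatively, and more self-containedly, one may derive these bounds directly from \eqref{eq:ode}: the inequality $\dot n \le P - \beta n$ gives the $n$-bound by Gr\"onwall, and then $\frac{d}{dt}(\rho + n) = P - \alpha\rho - \beta n \le P - \gamma(\rho+n)$ with $\gamma = \min\{\alpha,\beta\}$ yields the bound on $\rho + n$.

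The main obstacle, such as it is, is purely the global-existence step: the linear-growth structure of the nonlinearity is not immediately apparent (the $Rn\rho$ and $R\rho n$ terms are genuinely quadratic), so a crude Gr\"onwall estimate on $|\rho| + |n|$ alone does not close. The essential observation that unlocks the argument is the \emph{cancellation} of the quadratic coupling terms when one forms the combination $\rho + n$, exactly mirroring the cancellation of $\int_\T Rn|\psi|^2\, dx$ in the proof of Lemma \ref{lem:mas}; together with positivity of $\rho$ and $n$, this dissipative structure confines the orbit to a compact invariant region and thereby precludes blow-up. Everything else is routine.
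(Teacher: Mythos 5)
Your proposal is correct, and its skeleton (local Picard--Lindel\"of, positivity via variation of constants, a-priori bounds plus the continuation criterion) matches the paper's. The difference lies in how the global bound is obtained. The paper simply uses positivity to discard the quadratic term in the $n$-equation, getting $\dot n\le P$ and hence $n(t)\le Pt+n_0$, and then treats the $\rho$-equation as linear with this time-dependent coefficient, yielding $\rho(t)\le \rho_0\,e^{RPt^2/2+t(Rn_0-\alpha)}$; these bounds grow in time but are finite on every compact interval, which is all that continuation requires. You instead exploit the cancellation of the $R\rho n$ coupling in $\frac{d}{dt}(\rho+n)=P-\alpha\rho-\beta n\le P-\gamma(\rho+n)$, which buys genuinely \emph{uniform-in-time} bounds (mirroring Lemma \ref{lem:mas}) and a compact absorbing region; your alternative of quoting Lemmas \ref{lem:mas} and \ref{lem:n_inf} directly is the ``consequence of Theorem \ref{thm:global}'' route the paper explicitly mentions and then declines in favor of a self-contained argument. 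Two small remarks: your claim that a Gr\"onwall estimate ``does not close'' without the cancellation is overstated, since the paper closes it using only positivity and the linearity of the $\rho$-equation in $\rho$; and for the continuation criterion you only need the orbit to remain \emph{bounded} in $\R^2$ (the vector field being polynomial on all of $\R^2$), not confined to a compact subset of the open quadrant $\R^2_+$, which your positivity argument alone would not give. Your treatment of positivity is, if anything, more careful than the paper's one-line appeal to continuity.
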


Of course this result can be seen as a simple consequence of Theorem \ref{thm:global}. Its proof however, can be stated independently and 
reveals new estimates 
for $\rho(t)$ and $n(t)$.

\begin{proof}
Since the right hand side of \eqref{eq:ode} is quadratic (and thus locally Lipschitz) in $\rho, n$, a classical theorem implies 
existence of a unique local solution $(\rho, n)\in C^1([0, \tau), \R^2)$, for some $\tau>0$. Continuity also implies positivity of this solution. 
Because of that, the second line of  \eqref{eq:ode} allows us to estimate $\dot n \le P$, 
and thus 
\[
n(t) \le Pt +n_0, \quad \forall \, 0 \le t <\tau.
\]
Plugging this into the equation for $\rho$ gives
\[
\dot \rho \le R(Pt +n_0)\rho - \alpha \rho,
\]
which can be directly integrated, to yield 
\[
\rho(t) \le \rho_0 e^{RPt^2/2+ t(Rn_0 - \alpha)}, \quad \forall \, 0 \le t <\tau.
\]
In turn, this implies that the local 
solution $(\rho, n)(t)$ can be (uniquely) extended for all $t\ge0$.
\end{proof}

Given a solution $(\rho, n)$ of \eqref{eq:ode}, the condensate phase-function $\phi(t)$ associated to $\psi = \sqrt{\rho} e^{i \phi}$ can then 
be determined a-posteriori via
\begin{equation*}
\dot\phi=-g\rho-\lambda n, \quad \phi|_{t=0} = \phi_0,
\end{equation*} 
which gives
\[
\phi(t)= -\int_0^t  g \rho(\tau) + \lambda n(\tau) \, d\tau.
\]
If we set $\psi(t)=\sqrt{\rho}(t)e^{i\phi(t)}$, then we have defined a global in time, spatially homogeneous solution $(\psi, n)$ of \eqref{GP}.

\begin{remark} It has been (formally) shown in \cite{CCFHKR}, that small perturbations of spatially homogenous steady states (see subsection below) obey the Korteweg-de Vries equation, 
and thus admit solutions of dark-soliton type. It 
would be interesting to study the stability of these solitons within the dynamics of \eqref{GP}, but this is beyond the scope of the current article. 
\end{remark}


\subsection{Characterization of spatially homogenous equilibria} 

Now we turn our attention to the equilibrium points of the ODE system \eqref{eq:ode}, in the hope that they will give us 
some insight into the full ($x$-dependent) dynamics of \eqref{GP}.

A preliminary formal analysis of homogeneous stationary states, together with their stability properties, was already performed in \cite{WoCa, BOM}.

\begin{theorem}\label{thm:ODE}

The system \eqref{eq:ode} has two equilibrium points, given by
\begin{equation}
\xi_{1}=\left(\begin{array}{c}
\rho_{1}^{*}\\
n_{1}^{*}
\end{array}\right)=\left(\begin{array}{c}
\frac{1}{\alpha R}\left(PR-\alpha\beta\right)\\
\frac{\alpha}{R}
\end{array}\right),\label{eq:equi1}
\end{equation}
and
\begin{equation}
\xi_{2}=\left(\begin{array}{c}
\rho_{2}^{*}\\
n_{2}^{*}
\end{array}\right)=\left(\begin{array}{c}
0\\
\frac{P}{\beta}
\end{array}\right).\label{eq:equi2}
\end{equation}
Furthermore:
\renewcommand{\labelenumi}{(\roman{enumi})}
\begin{enumerate}
\item Both $\xi_{1}$ and $\xi_{2}$ are hyperbolic, except for the case
$PR-\alpha\beta=0$.
\item $\xi_{1}$ is an asymptotically stable spiral if $0<\frac{P^{2}R^{2}}{4\alpha^{2}}<PR-\alpha\beta$.
\item $\xi_{1}$ is an asymptotically stable node if $0<PR-\alpha\beta\le\frac{P^{2}R^{2}}{4\alpha^{2}}$.
\item $\xi_{1}$ is a saddle point, and hence unstable, if $PR-\alpha\beta<0$.
\item $\xi_{2}$ is a saddle point if $PR-\alpha\beta>0$.
\item $\xi_{2}$ is an asymptotically stable node if $PR-\alpha\beta<0$.
\end{enumerate}
\end{theorem}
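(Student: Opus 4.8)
The plan is to proceed by the standard linearization method for planar autonomous systems. First I would locate the equilibria by setting the right-hand side of \eqref{eq:ode} to zero. The equation $\dot\rho=(Rn-\alpha)\rho=0$ forces one of two cases: either $\rho=0$, which upon substitution into $\dot n=P-(R\rho+\beta)n=0$ gives $n=P/\beta$ and hence $\xi_2$; or $n=\alpha/R$, which then yields $\rho=(PR-\alpha\beta)/(\alpha R)$ and hence $\xi_1$. This shows that \eqref{eq:ode} has exactly the two equilibria claimed in \eqref{eq:equi1} and \eqref{eq:equi2}.

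Next I would compute the Jacobian of the vector field,
\[
J(\rho,n)=\begin{pmatrix} Rn-\alpha & R\rho \\ -Rn & -(R\rho+\beta) \end{pmatrix},
\]
and evaluate it at each equilibrium. At $\xi_2$ the matrix is lower triangular, so its eigenvalues are read off immediately as $-\beta<0$ and $(PR-\alpha\beta)/\beta$. The sign of the second eigenvalue is governed entirely by the sign of $PR-\alpha\beta$, which at once produces statements (v) and (vi), together with the non-hyperbolic exception in (i) corresponding to a vanishing eigenvalue when $PR-\alpha\beta=0$.

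The substance of the argument is the classification of $\xi_1$. Evaluating $J$ at $\xi_1$ gives a matrix with trace $T=-PR/\alpha$ and determinant $D=PR-\alpha\beta$. Since $P,R,\alpha>0$, the trace is always strictly negative, so the local type is dictated entirely by $D$ and by the discriminant $\Delta=T^{2}-4D=P^{2}R^{2}/\alpha^{2}-4(PR-\alpha\beta)$. When $D<0$ the eigenvalues are real of opposite sign, giving a saddle, hence (iv). When $D>0$ the strictly negative trace forces both eigenvalues into the open left half-plane, so $\xi_1$ is asymptotically stable, and the sign of $\Delta$ splits this into the spiral case (ii) when $\Delta<0$, i.e. $P^{2}R^{2}/(4\alpha^{2})<PR-\alpha\beta$, and the node case (iii) when $\Delta\ge0$, i.e. $PR-\alpha\beta\le P^{2}R^{2}/(4\alpha^{2})$. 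The degenerate value $D=0$ again yields a zero eigenvalue and the remaining non-hyperbolic exception in (i).

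I do not expect a genuine obstacle here: the vector field is polynomial, so all spectral data are elementary, and the Hartman--Grobman theorem justifies transferring the linear phase portrait to the nonlinear system at every hyperbolic equilibrium. The only point demanding care is the bookkeeping of inequalities, so that the three regimes for $\xi_1$ partition $\{PR-\alpha\beta>0\}$ exactly as in (ii) and (iii), and in particular that the threshold $\Delta=0$ (a degenerate, improper node) is correctly absorbed into the node case via the non-strict inequality in (iii).
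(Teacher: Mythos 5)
Your proposal is correct and follows essentially the same route as the paper: compute the Jacobian at each equilibrium, read off the eigenvalues (your trace--determinant--discriminant bookkeeping is equivalent to the paper's explicit eigenvalue formulas), and invoke Hartman--Grobman to transfer the linear classification to the nonlinear system. The only cosmetic difference is that the paper first translates each equilibrium to the origin before linearizing, which changes nothing in the resulting Jacobians or conclusions.
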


From the physics point of view, the two equilibria $\xi_1, \xi_2$, have very different interpretations: $\xi_2$ corresponds to the 
case where {\it no condensate} is formed and the system simply relaxes to the stationary state $\frac{P}{\beta}$ for the reservoir. $\xi_1$, however,
describes a configuration with a {\it non-zero condensate} in dynamical equilibrium with the reservoir. It is thereby 
natural to impose the condition $PR-\alpha\beta>0$, in order to ensure that the 
equilibrium condensate density $\rho_1^*$ is positive. 

\begin{proof}
The fact that $\xi_{1}$ and $\xi_{2}$ are equilibrium points of
\eqref{eq:ode} follows immediately. For the remaining assertions on the qualitative behavior of these equilibria we shall 
use the well-known Hartman-Grobman theorem, see, e.g., \cite{Per}. The latter allows one to describe the 
local behavior of dynamical systems in the neighborhood of a hyperbolic equilibrium point via its linearization.

To this end, we translate $\xi_{1}$ to the origin using the following change of variables in \eqref{eq:ode}:
\begin{align*}
x=\rho-\frac{1}{\alpha R}\left(PR-\alpha\beta\right),\quad y=n-\frac{\alpha}{R}.
\end{align*}
Then \eqref{eq:ode} becomes
\begin{equation}\label{eq:sys_trans}
\left\{\begin{aligned}
\dot x& =  \left(R\left(y+\frac{\alpha}{R}\right)-\alpha\right)\left(x+\frac{1}{\alpha R}\left(PR-\alpha\beta\right)\right),\\
\dot y & =  P-\left(R\left(x+\frac{1}{\alpha R}\left(PR-\alpha\beta\right)\right)+\beta\right)\left(y+\frac{\alpha}{R}\right).
\end{aligned}\right.
\end{equation}
The Jacobian of (\ref{eq:sys_trans}) at $(x,y)=(0,0)$ is given by
\[
J\left(0,0\right)=\left(\begin{array}{cc}
0 & \frac{1}{\alpha}\left(PR-\alpha\beta\right)\\
-\alpha & -\frac{PR}{\alpha}
\end{array}\right).
\]
It has the following eigenvalues:
\[
\lambda_{1}=\frac{1}{2\alpha}\left(-PR-\sqrt{P^{2}R^{2}-4\alpha^{2}\left(PR-\alpha\beta\right)}\right),
\]
and
\[
\lambda_{2}=\frac{1}{2\alpha}\left(-PR+\sqrt{P^{2}R^{2}-4\alpha^{2}\left(PR-\alpha\beta\right)}\right).
\]
In view of these, the equilibrium point is hyperbolic if $PR-\alpha\beta\neq0$,
and the first part of (i) follows. Now we can use the Hartman-Grobman
theorem to characterize this equilibrium point through the linearized
system. Hence, (ii) follows from the requirement that $\lambda_{1}$
and $\lambda_{2}$ must be complex with negative real part, (iii)
is a consequence of $\lambda_{1}$ and $\lambda_{2}$ being negative
real quantities, and (iv) results from $\lambda_{1}$ and $\lambda_{2}$
being real with opposite sign.

We proceed in the same way for $\xi_{2}$. In order to translate this equilibrium
point to the origin, we use the change of variables
\[
x=\rho,\quad 
y=n-\frac{P}{\beta},
\]
in which case, \eqref{eq:ode} becomes
\begin{equation}\label{eq:sys_trans2}
\left\{\begin{aligned}
\dot x & =  \left(R\left(y+\frac{P}{\beta}\right)-\alpha\right)x,\\
\dot y & =  P-\left(Rx+\beta\right)\left(y+\frac{P}{\beta}\right).
\end{aligned}\right.
\end{equation}
The Jacobian of (\ref{eq:sys_trans2}) at $(x,y)=(0,0)$ is given
by
\[
J\left(0,0\right)=\left(\begin{array}{cc}
\frac{PR}{\beta}-\alpha & 0\\
-\frac{PR}{\beta} & -\beta
\end{array}\right),
\]
with eigenvalues:
\[
\lambda_{1}=-\beta,\quad \lambda_{2}=\frac{PR-\alpha\beta}{\beta}.
\]
Therefore, the second part of (i), (v), and (vi) follow as in the
previous cases.
\end{proof}

\begin{remark}
Note that in the case $\alpha = \beta$, the total mass 
of both stationary states $\xi_1$ and $\xi_2$ is given by
\[
M^*=\int_\T \xi_j \, dx \equiv \int_\T \big( \rho^*_j+ n^*_j \big) \, dx = \frac{P |\T|}{\beta}\equiv \frac{P |\T|}{\alpha}, \quad j=1, 2.
\]
which is consistent with Lemma \ref{lem:mas}.
\end{remark}

In the physical relevant case of $\beta \gg \alpha$, the situation with non-vanishing condensate becomes even simpler.

\begin{corollary} \label{cor:ODE}
Let $\alpha>0$ and $\beta>0$ be such that $\beta\gg\alpha$ . Then,
for any values of $P$ and $R$ such that $PR-\alpha\beta>0$, $\xi_{1}$
is an asymptotically stable node. 
\end{corollary}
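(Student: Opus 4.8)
The plan is to reduce the statement entirely to part (iii) of Theorem \ref{thm:ODE}, which asserts that $\xi_1$ is an asymptotically stable node precisely when $0 < PR-\alpha\beta \le \frac{P^2R^2}{4\alpha^2}$. Since we are assuming $PR-\alpha\beta>0$, the lower bound is automatic and also guarantees that $\xi_1$ is hyperbolic (the degenerate case $PR-\alpha\beta=0$ of part (i) is excluded), so the Hartman-Grobman classification applies. The only remaining task is therefore to verify the upper bound
\[
PR-\alpha\beta \le \frac{P^2R^2}{4\alpha^2}.
\]
Equivalently, recalling the eigenvalues $\lambda_{1,2}$ computed in the proof of Theorem \ref{thm:ODE}, I would show that the radicand $P^2R^2-4\alpha^2(PR-\alpha\beta)$ appearing there is nonnegative, so that both eigenvalues are real; they are then automatically negative, since the trace $-PR/\alpha$ and determinant $PR-\alpha\beta$ of the linearization have the appropriate signs. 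This is exactly the node condition.

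To carry this out, I would treat the radicand as a quadratic polynomial in the single variable $u:=PR>0$, namely
\[
q(u) = u^2 - 4\alpha^2 u + 4\alpha^3\beta,
\]
and compute its discriminant $\Delta = 16\alpha^4 - 16\alpha^3\beta = 16\alpha^3(\alpha-\beta)$. The key observation is that the regime $\beta\gg\alpha$ (in fact merely $\beta>\alpha$, together with $\alpha>0$) forces $\Delta<0$. Since the leading coefficient of $q$ is positive, this yields $q(u)>0$ for \emph{every} real $u$, and in particular for $u=PR$. Hence the required inequality holds for all admissible $P,R$, the eigenvalues are real and distinct, and combined with the sign analysis above, $\xi_1$ is an asymptotically stable node.

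I do not anticipate a genuine obstacle: the entire content of the corollary is the elementary sign computation $\Delta = 16\alpha^3(\alpha-\beta)<0$, and the physical regime $\beta\gg\alpha$ enters only through the weaker inequality $\beta>\alpha$. The one point worth recording carefully is that $\beta\gg\alpha$ keeps $\Delta$ strictly negative — the borderline $\Delta=0$ would occur only at $\alpha=\beta$ — so that $q(PR)>0$ holds with strict inequality; this places us in the interior of the range in part (iii) and confirms that $\xi_1$ is a (nondegenerate) node rather than the exceptional repeated-eigenvalue case.
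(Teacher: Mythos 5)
Your proposal is correct and follows essentially the same route as the paper: the paper likewise reduces the claim to the sign of the quadratic $(PR)^2-4\alpha^2(PR)+4\alpha^3\beta$ and observes that its roots $2\alpha^{2}\pm\sqrt{4\alpha^{3}(\alpha-\beta)}$ are complex when $\beta\gg\alpha$, which is exactly your discriminant computation $\Delta=16\alpha^{3}(\alpha-\beta)<0$. Your added remarks --- that only $\beta>\alpha$ is really needed and that strict positivity of the radicand rules out the repeated-eigenvalue case --- are accurate refinements but do not change the argument.
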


In particular, $\beta\gg\alpha$
excludes the possibility of $\xi_{1}$ being an asymptotically stable
spiral, and thus we do not expect oscillations of the solution $\psi, n$ near the equilibrium.

\begin{proof}
Assume that we have $\alpha, \beta>0$ with $\beta\gg\alpha$
and we want to find the possible values of $P$ and $R$, with $PR-\alpha\beta>0$,
such that $\xi_{1}$ is either an asymptotically stable spiral or
node. From the results of Theorem \ref{thm:ODE}, we
obtain the inequalities
\[
\begin{array}{l}
\left(PR\right)^{2}-4\alpha^{2}\left(PR\right)+4\alpha^{3}\beta<0\textrm{ (spiral)},\\
\left(PR\right)^{2}-4\alpha^{2}\left(PR\right)+4\alpha^{3}\beta\ge0\textrm{ (node).}
\end{array}
\]
The equation
\[
\left(PR\right)^{2}-4\alpha^{2}\left(PR\right)+4\alpha^{3}\beta=0
\]
has the roots
\[
PR=\left\{ \begin{array}{c}
2\alpha^{2}+\sqrt{4\alpha^{3}\left(\alpha-\beta\right)},\\
2\alpha^{2}-\sqrt{4\alpha^{3}\left(\alpha-\beta\right)}.
\end{array}\right.
\]
Both of these roots are complex if $\beta\gg\alpha$ and one
can verify that for any $P, R$, with $PR-\alpha\beta>0$,
and $\beta\gg\alpha$, the only possibility is 
\[
\left(PR\right)^{2}-4\alpha^{2}\left(PR\right)+4\alpha^{3}\beta>0.
\]
Notice that this inequality is also valid for $PR=0$. Hence,
$\beta\gg\alpha$ ensures that $\xi_{1}$ is an asymptotically stable
node. 
\end{proof}

Figures (\ref{fig:spiral}) and (\ref{fig:node}) below show the phase portrait
of (\ref{eq:ode}) for different values of the parameters. The
numerical simulations have been obtained using a standard fourth-order Runge-Kutta method, and 
agree with the results of Theorem \ref{thm:ODE}.

\begin{figure}[H]
\begin{centering}
\includegraphics[scale=0.6]{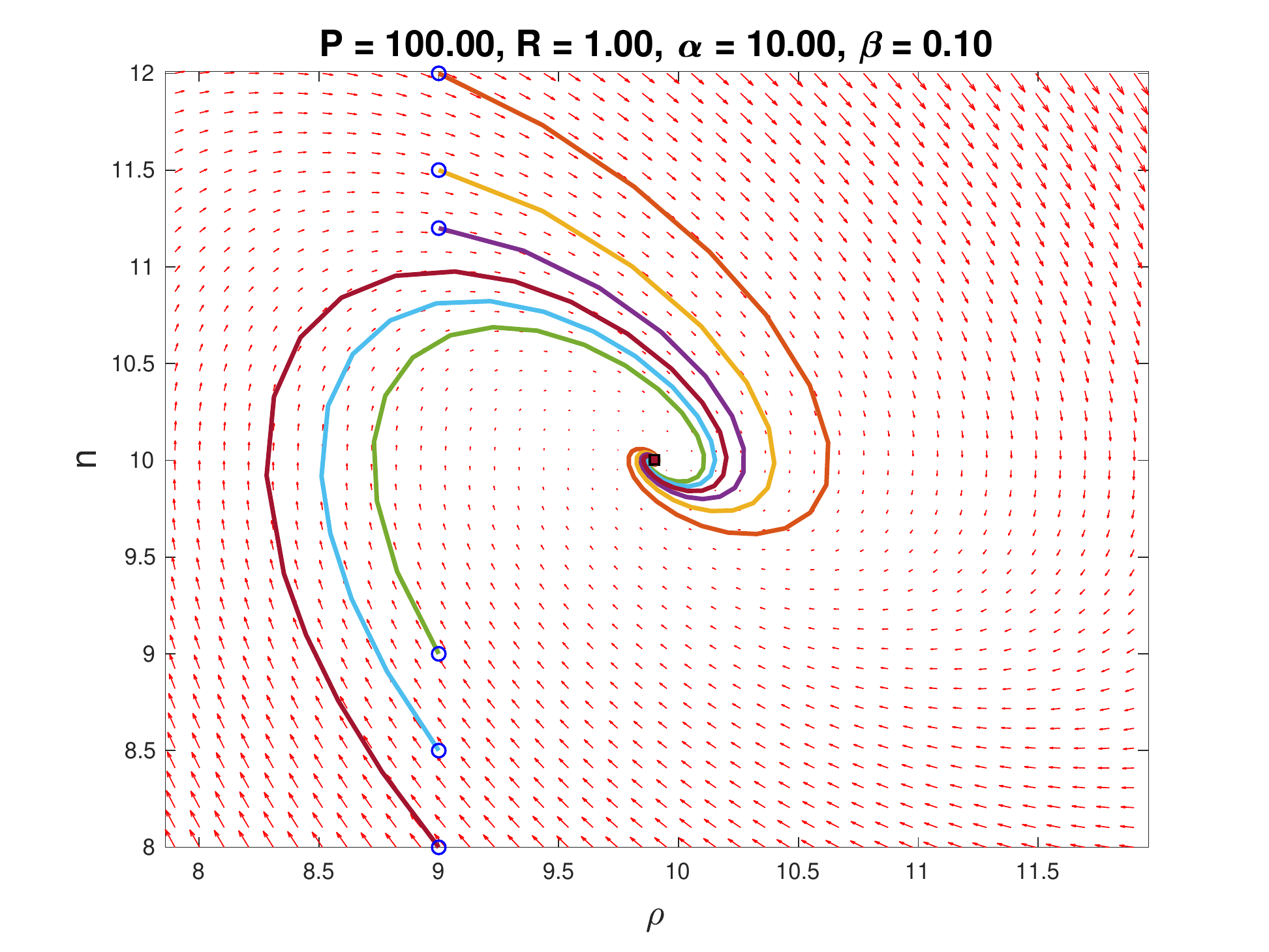}
\par\end{centering}
\caption{Asymptotically stable spiral $\xi_{1}$, with
$0<\left(\frac{PR}{2\alpha}\right)^2<PR-\alpha\beta$.}\label{fig:spiral}
\end{figure}
\begin{figure}[H]
\begin{centering}
\includegraphics[scale=0.6]{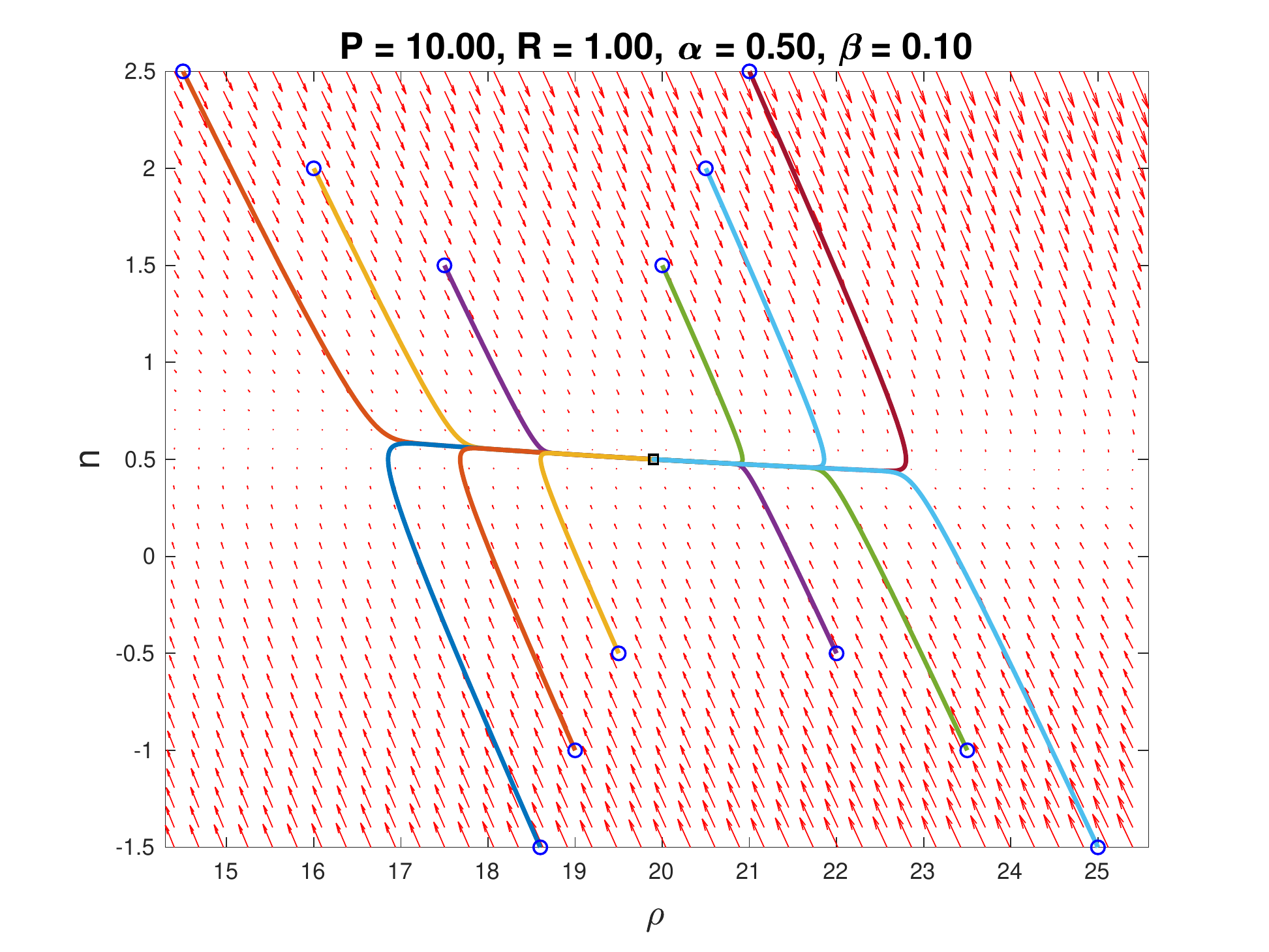}
\par\end{centering}
\caption{Asymptotically stable node $\xi_{1}$, with $0<PR-\alpha\beta\le\left(\frac{PR}{2\alpha}\right)^2.$ }\label{fig:node}
\end{figure}

As we have seen, both $\xi_1, \xi_2$ are hyperbolic, except if $PR-\alpha \beta=0$. 
Determining the stability and qualitative behavior of a dynamical system in 
a neighborhood of a non-hyperbolic critical point requires a
different approach, such as the center manifold theory. However, we
will not discuss this situation since our primary concern is $PR-\alpha\beta>0$. 
We shall only add that in the case $PR-\alpha\beta=0$ the system
\eqref{eq:ode} has a single non-hyperbolic critical point given
by
\[
\xi=\left(\begin{array}{c}
\rho^{*}\\
n^{*}
\end{array}\right)=\left(\begin{array}{c}
0\\
\frac{\alpha}{R}
\end{array}\right)\equiv\left(\begin{array}{c}
0\\
\frac{P}{\beta}
\end{array}\right).
\]
Moreover, in our numerical simulations, $\xi$ behaves like a node when approached from $\rho>0$ and like a saddle point when approached
from $\rho<0$. This behavior is commonly observed in non-hyperbolic
equilibrium points (see \cite{Per}).

\begin{remark}\label{rem:abel}
Note that \eqref{eq:ode} can be reduced to the following first
order equation and quadrature:
\begin{equation}
\frac{dn}{d\rho}=\frac{P-\left(R\rho+\beta\right)n\left(\rho\right)}{\left(Rn\left(\rho\right)-\alpha\right)\rho},\label{eq:Abel1}
\end{equation}
together with
\begin{equation*}
t=\int\frac{d\rho}{\left(Rn\left(\rho\right)-\alpha\right)\rho}+c,\label{eq:quad_Abel}
\end{equation*}
where $c$ is an integration constant. Equation \eqref{eq:Abel1} is an Abel equation of the second kind, which is a well-studied class of equations, see, e.g. \cite{ZP}.
Unfortunately, \eqref{eq:Abel1} does not seem to fit any of the explicitly solvable examples currently known. We have to consider this fact later for our numerical scheme. 
\end{remark}


\subsection{A Lyapunov functional for $\xi_2$}

Recall that the equilibrium point $\xi_2$ defined in \eqref{eq:equi2}, describing the situation with vanishing condensate, is asymptotically stable if 
$PR-\alpha\beta<0$. Under this condition, it is possible to define a Lyapunov functional for the ODE system \eqref{eq:ode}. 
To this end, we first note that \eqref{eq:ode} can be rewritten as
\begin{equation}\label{eq:ode1}
\left\{\begin{aligned}
\dot\rho=&\frac1\beta(PR-\alpha\beta)\rho+R\left(n-\frac{P}{\beta}\right)\rho\\
\dot n=& -(R\rho+\beta)\left(n-\frac{P}{\beta}\right)-\frac{PR}{\beta}\rho.
\end{aligned}\right.
\end{equation}
In this way, it is easy to see that the following holds:

\begin{lemma} The expression
\begin{equation*}
\ell (t):=\frac{P}{\beta}\rho(t)+\frac12\left(n(t)-\frac{P}{\beta}\right)^2,
\end{equation*}
is a Lyapunov functional for \eqref{eq:ode1}, provided $PR-\alpha\beta<0$ and $\rho\ge0$. In particular, we 
have that
\[
\left(\begin{array}{c}
\rho(t)\\
n(t)
\end{array}\right) \xrightarrow{t \to +\infty} \xi_{2}=\left(\begin{array}{c}
0\\
\frac{P}{\beta}
\end{array}\right), \quad \text{exponentially fast.}
\]
\end{lemma}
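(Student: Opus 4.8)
The plan is to verify directly that $\ell$ is positive definite about $\xi_2$ and that its time-derivative along trajectories of \eqref{eq:ode1} is non-positive, and then to sharpen the latter estimate into a differential inequality of the form $\dot\ell\le -c\ell$, from which the exponential decay follows.

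First I would set $m(t):=n(t)-\frac{P}{\beta}$, so that $\ell=\frac{P}{\beta}\rho+\frac12 m^{2}$ and the equilibrium $\xi_2$ corresponds to $(\rho,m)=(0,0)$. Since $\rho\ge0$ along the solution (by the positivity established in the earlier lemma), we have $\ell\ge0$, with $\ell=0$ precisely at $\xi_2$. Differentiating $\ell$ in time and inserting \eqref{eq:ode1} gives
\[
\dot\ell=\frac{P}{\beta}\dot\rho+m\dot n
=\frac{P}{\beta^{2}}(PR-\alpha\beta)\rho+\frac{PR}{\beta}\rho m-(R\rho+\beta)m^{2}-\frac{PR}{\beta}\rho m.
\]
The crucial observation is that the two cross terms $\pm\frac{PR}{\beta}\rho m$ cancel exactly, leaving
\[
\dot\ell=\frac{P}{\beta^{2}}(PR-\alpha\beta)\rho-(R\rho+\beta)m^{2}.
\]
Under the hypotheses $PR-\alpha\beta<0$ and $\rho\ge0$, the first term is non-positive, while the second is non-positive because $R\rho+\beta>0$. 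Hence $\dot\ell\le0$, and combined with the positivity of $\ell$ this identifies $\ell$ as a Lyapunov functional for \eqref{eq:ode1}.

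To obtain the asserted exponential rate, I would compare $\dot\ell$ with $\ell$ term by term. Writing $\delta:=\alpha\beta-PR>0$ and discarding the favourable $R\rho$ contribution in the quadratic term, one has
\[
\dot\ell\le-\frac{P\delta}{\beta^{2}}\rho-\beta m^{2}
\le -c\left(\frac{P}{\beta}\rho+\frac12 m^{2}\right)=-c\,\ell,
\qquad c:=\min\Big\{\tfrac{\delta}{\beta},\,2\beta\Big\}>0,
\]
where the middle inequality follows by matching the coefficients of $\rho$ and of $m^{2}$ separately. Gronwall's inequality then yields $\ell(t)\le \ell(0)e^{-ct}$. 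Since $\frac{P}{\beta}\rho(t)\le\ell(t)$ and $\frac12\big(n(t)-\frac{P}{\beta}\big)^{2}\le\ell(t)$, this forces $\rho(t)\to0$ and $n(t)\to\frac{P}{\beta}$ exponentially fast, as claimed.

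The computation is essentially mechanical; the one step deserving care is the cancellation of the cross terms $\pm\frac{PR}{\beta}\rho m$, which is exactly what makes $\dot\ell$ sign-definite and motivates the particular choice of the coefficient $\frac{P}{\beta}$ in front of $\rho$. The only other point to watch is that both the sign of the first term and the term-by-term comparison leading to $\dot\ell\le-c\ell$ rely on $\rho\ge0$, so the positivity of the solution is needed not merely for $\ell\ge0$ but also for the decay estimate itself.
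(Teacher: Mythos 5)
Your proposal is correct and follows essentially the same route as the paper: the same cancellation of the cross terms $\pm\frac{PR}{\beta}\rho\,(n-\tfrac{P}{\beta})$ in $\dot\ell$, the same bound $\dot\ell\le\frac{P}{\beta^2}(PR-\alpha\beta)\rho-\beta\,(n-\tfrac{P}{\beta})^2\le -c\,\ell$, and the same Gronwall conclusion. The only difference is that you make the constant $c=\min\{\delta/\beta,\,2\beta\}$ explicit, where the paper merely asserts ``for some $c>0$.''
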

\begin{proof} 
Using \eqref{eq:ode1} we simply compute the time-derivative of $\mathcal L$:
\begin{equation*}
\begin{aligned}
&\frac{d\ell}{dt} =\\
&=\frac{P}{\beta^2}(PR-\alpha\beta)\rho+\frac{PR}{\beta}\rho\left(n-\frac{P}{\beta}\right)-(R\rho+\beta)\left(n-\frac{P}{\beta}\right)^2-\frac{PR}{\beta}\rho\left(n-\frac{P}{\beta}\right)\\
& \le \frac{P}{\beta^2}(PR-\alpha\beta)\rho-\beta\left(n-\frac{P}{\beta}\right)^2\\
& \le -c \ell,
\end{aligned}
\end{equation*}
for some $c>0$. Thus 
\[
\ell(t) \le e^{-ct}  \ell(0), \quad \text{for all $t\ge 0$,} 
\]
which directly implies exponential decay of $\rho$ and $n$.
\end{proof}

\begin{remark} This simple idea can even be lifted to the level of the original PDE-system \eqref{GP}. Indeed, let
\[
{L}(t):= \frac{P}{\beta}\int_\T |\psi(t,x)|^2 \, dx + \frac12\int_\T \left(n(t,x)-\frac{P}{\beta}\right)^2 \, dx.
\]
Differentiating ${ L}$ with respect to time and using the first equation from \eqref{QHD}, yields 
an exponentially fast decay in-time of ${ L}$, along the same lines as before. Assuming $T_{\rm max}=+\infty$, this clearly implies 
that, as $t\to +\infty$: $\psi(t,x) \to 0$, and $n(t,x)\to \frac{P}{\beta}$, exponentially fast. 
\end{remark}


\section{Numerical simulations}\label{sec:num}

In this section, we study the (long-time) behavior of solutions of \eqref{GP} with general (non-space-homogeneous) initial data via numerical integration. In particular, we are interested in the evolution of the system after perturbing the space-homogeneous solutions obtained in Section \ref{sec:space}. This approach will give us an insight into the attractor of the PDE system \eqref{GP} and a way to compare it with that of the ODE system \eqref{eq:ode}.

\subsection{Stationary states} Before presenting the details of our numerical computations, we shall briefly comment on some basic properties of general $x$-dependent steady states. These are solutions of \eqref{QHD} given by
\begin{equation}\label{steady}
\psi(t,x) = e^{-i \mu t} \varphi(x), \quad n(x) = \frac{P}{R |\varphi|^2 + \beta},
\end{equation}
where $\mu \in \R$ and $\varphi(x)\in \C$, some yet undetermined wave function, which is only unique up to a constant phase factor.

\begin{lemma}
A necessary condition for the existence of non-trivial steady states $0\not = \varphi \in H^1(\T)$, and hence $ n = \frac{P}{R |\varphi|^2 + \beta} \in L^\infty(\T)$, is: 
\[
PR-\alpha\beta>0 , \quad \text{and} \quad \mu >0.
\]
\end{lemma}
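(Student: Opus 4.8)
The plan is to insert the stationary ansatz \eqref{steady} into \eqref{GP} and reduce the problem to a single semilinear profile equation for $\varphi$, from which both conditions will follow by testing against $\overline{\varphi}$. Since the proposed $n$ is time-independent, the second (reservoir) equation in \eqref{GP} holds precisely because $n = P/(R|\varphi|^2+\beta)$; observe that this relation already guarantees $0 < n \le P/\beta$, as $P,\beta>0$. Substituting $\psi = e^{-i\mu t}\varphi$ into the first equation and cancelling the common factor $e^{-i\mu t}$ yields the profile equation
\[
-\tfrac{1}{2}\varphi'' + g|\varphi|^2\varphi + \lambda n\varphi + \tfrac{i}{2}(Rn-\alpha)\varphi = \mu\varphi,\qquad n = \frac{P}{R|\varphi|^2+\beta}.
\]

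Next I would multiply this identity by $\overline{\varphi}$, integrate over $\T$, and integrate by parts in the term $-\tfrac12\varphi''$, which produces no boundary contribution on the torus. Separating real and imaginary parts decouples the two conditions. The real part gives
\[
\frac12\int_\T|\varphi'|^2\,dx + g\int_\T|\varphi|^4\,dx + \lambda\int_\T n|\varphi|^2\,dx = \mu\int_\T|\varphi|^2\,dx .
\]
Because $\varphi\not\equiv 0$ and $g,\lambda>0$, $n>0$, the left-hand side is strictly positive, while $\int_\T|\varphi|^2\,dx>0$; hence $\mu>0$. The imaginary part gives the orthogonality (solvability) condition
\[
\int_\T (Rn-\alpha)|\varphi|^2\,dx = 0 .
\]

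It remains to extract $PR-\alpha\beta>0$ from this last identity, which is the only step requiring any real thought. Inserting $n=P/(R|\varphi|^2+\beta)$ and rewriting $Rn-\alpha = \big((PR-\alpha\beta)-\alpha R|\varphi|^2\big)/(R|\varphi|^2+\beta)$, the condition becomes
\[
\int_\T \frac{(PR-\alpha\beta)-\alpha R|\varphi|^2}{R|\varphi|^2+\beta}\,|\varphi|^2\,dx = 0 .
\]
I would argue by contradiction: if $PR-\alpha\beta\le 0$, then the numerator is $\le -\alpha R|\varphi|^2\le 0$, so the integrand is nonpositive everywhere and strictly negative on the set $\{|\varphi|>0\}$. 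Since $\varphi\in H^1(\T)$ is not identically zero, this set has positive measure, forcing the integral to be strictly negative, which contradicts the identity above. Therefore $PR-\alpha\beta>0$. The main (mild) obstacle is precisely this sign analysis: one must keep track of the points where $\varphi$ vanishes and use that a nontrivial $H^1$ function has support of positive measure, so that the vanishing of the integral cannot be reconciled with a pointwise one-signed integrand.
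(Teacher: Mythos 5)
Your proposal is correct and follows essentially the same route as the paper: insert the ansatz, multiply the profile equation by $\overline{\varphi}$, and read off $\mu>0$ from the real part and $PR-\alpha\beta>0$ from the imaginary part. The only cosmetic difference is that the paper first derives the pointwise real/imaginary system and then integrates, writing the imaginary-part identity as $(PR-\alpha\beta)\int_\T \frac{|\varphi|^2}{R|\varphi|^2+\beta}\,dx = \alpha R\int_\T \frac{|\varphi|^4}{R|\varphi|^2+\beta}\,dx$ and concluding directly, whereas you integrate first and run the equivalent sign argument by contradiction.
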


\begin{proof}
Plugging the ansatz \eqref{steady} into \eqref{QHD} yields the following equation for $\varphi$:
\[
\mu \varphi=-\frac12\partial_x^2\varphi+g|\varphi|^2\varphi+ \frac{\lambda P \varphi}{\beta+ R|\varphi |^2} +\frac{i}{2}\left(\frac{PR}{\beta+ R|\varphi|^2}-\alpha\right)\varphi.
\]
Here $\mu$ plays the role of a chemical potential. 
Multiplying this equation by $\overline \varphi$ and separating real and imaginary parts, we find, after some straightforward computations,
\begin{equation}\label{steadyGP}
\left\{\begin{aligned}
&\mu |\varphi|^2=-\frac14\partial_x^2|\varphi|^2+\frac{1}{2} |\partial_x \varphi|^2 +g|\varphi|^4+ \frac{\lambda P |\varphi|^2}{\beta+ R|\varphi |^2}, \\
&0 =-\frac{1}{2}\partial_x \im(\overline \varphi \partial_x \varphi )+ \left(\frac{PR}{\beta+ R|\varphi|^2}-\alpha\right)|\varphi|^2,
\end{aligned}\right.
\end{equation}
By integrating the second equation over $\mathbb T$, the term involving the imaginary part vanishes and we thus have
\[
(PR - \alpha \beta ) \int_\T \frac{|\varphi|^2}{R|\varphi|^2+\beta}\, dx = \alpha R \int_\T \frac{|\varphi|^4}{R|\varphi|^2+\beta}\, dx.
\]
This implies $PR-\alpha\beta>0$ for otherwise $\varphi \equiv 0$. Also, by integrating the first equation of \eqref{steadyGP} over $\T$, we obtain
\[
\mu \int_\T |\varphi|^2\, dx = \frac{1}{2} \int_\T  |\partial_x \varphi|^2 +g\int_\T |\varphi|^4\, dx + \lambda P \int_\T \frac{ |\varphi|^2}{\beta+ R|\varphi |^2} \, dx,
\]
which clearly implies $\mu>0$, since $g, \lambda, P >0$ by assumption.
\end{proof}

Note that the second equation of \eqref{steadyGP} also shows that any {\it real-valued} (up to a constant phase) steady state wave function $\varphi\not =0$ is necessarily equal to 
\begin{equation}\label{stationary}
|\varphi|^2 = \frac{PR-\alpha \beta}{\alpha R},
\end{equation}
i.e., the same constant as that obtained in Theorem \ref{thm:ODE}. At the moment, we cannot exclude the possibility of 
complex steady states, $\varphi$, not obtained from a real function by a constant rotation of phase. On the other hand, we have not seen this situation in our numerical simulations. Such $\varphi\in \C$ would 
correspond to {\it non-equilibrium steady states} with non-vanishing current density, $J=\im(\overline \varphi \partial_x \varphi )\not =0$.


\subsection{Numerical method} Below, we shall present several numerical findings for solutions of our model system (\ref{GP}) with general (non-space-homogeneous) initial data. 
These numerical results are obtained using a Strang-splitting Fourier spectral method. 

Let $h=\Delta x>0$ denote the mesh size, with $h=\left|\mathbb{T}\right|/M$, where
$M\in 2\N$. Define $\tau=\Delta t>0$ to be the
time-step. Let the grid points be $x_{j}=a+jh,\: j=0,1,...,M$, and $t_{n}=n\tau,\: n=0,1,2,...$. The main idea is to split 
the system (\ref{GP}) into:
\begin{equation}\label{eq:sp_1}
\left\{\begin{aligned}
&i\partial_t\psi=  g\left|\psi\right|^{2}\psi+\lambda n\psi+\frac{i}{2}\left(Rn-\alpha\right)\psi,\\
& \partial_t n=  P-\left(R\left|\psi\right|^{2}+\beta\right)n
\end{aligned}\right.
\end{equation}
and
\begin{equation}\label{eq:sp_2}
i\partial_t\psi=-\frac12\partial_x^2 \psi.
\end{equation}

Notice that \eqref{eq:sp_1} is an ODE system. It is important to remark that this splitting method is particularly useful when the corresponding ODE system can be explicitly integrated. In such cases, one can usually show that the method is unconditionally stable, among other properties (see, e.g., \cite{Ba,Ba2,Si} ). To deal with the ODE resulting from the splitting method, one usually considers the WKB ansatz $\psi=\sqrt{\rho}e^{i\phi}$. If we proceed in this way for the system \eqref{eq:sp_1}, we end up with the system \eqref{eq:ode}. As indicated in Remark \ref{rem:abel}, a similarity reduction of \eqref{eq:ode} leads to an Abel equation of the second kind with no explicit solution. 

Since we are not able to explicitly integrate the ODE system \eqref{eq:sp_1}, we have to rely on numerical integration. In particular, the stability of the method used to integrate the ODE system will determine the stability of the entire numerical scheme.

On the other hand, we discretize \eqref{eq:sp_2} in space by a Fourier spectral method 
and then integrate in-time exactly via $$\psi = \mathcal F^{-1}\left(e^{it \xi^2/2} \mathcal F(\psi_0)\right).$$ 

Combining these two steps using a Strang-splitting yields a numerical solution, $\Psi_n^j \approx \psi(t_n, x_j)$, on the time-interval $\left[t_{n},t_{n+1}\right]$. 

When choosing an integration method for the ODE system \eqref{eq:sp_1}, we have to consider that the splitting method will be at most second-order accurate. Besides, it is essential to keep in mind the stability of the numerical scheme, as mentioned before. 

To corroborate the results presented below, we have used two different methods for the numerical integration of \eqref{eq:sp_1}: a fourth-order Runge-Kutta method and a second-order midpoint method.


\subsection{Numerical results} In this section, our primary goal is to study the time-evolution of certain 
perturbations of the space-homogenous solutions depicted in Figs. \ref{fig:spiral} and \ref{fig:node}.

Fig. \ref{fig:Test1} shows the time evolution of the position density
of the perturbed stationary solution corresponding to $\alpha=10$,
$\beta=0.1$, $R=1$, and $P=100$. In particular $PR-\alpha \beta>0$ in this case. Notice that after a transient phase,
the system returns to the stationary (space-homogeneous) solution \eqref{stationary}.

\begin{figure}[H]
\begin{centering}
\begin{tabular}{cc}
\includegraphics[scale=0.32]{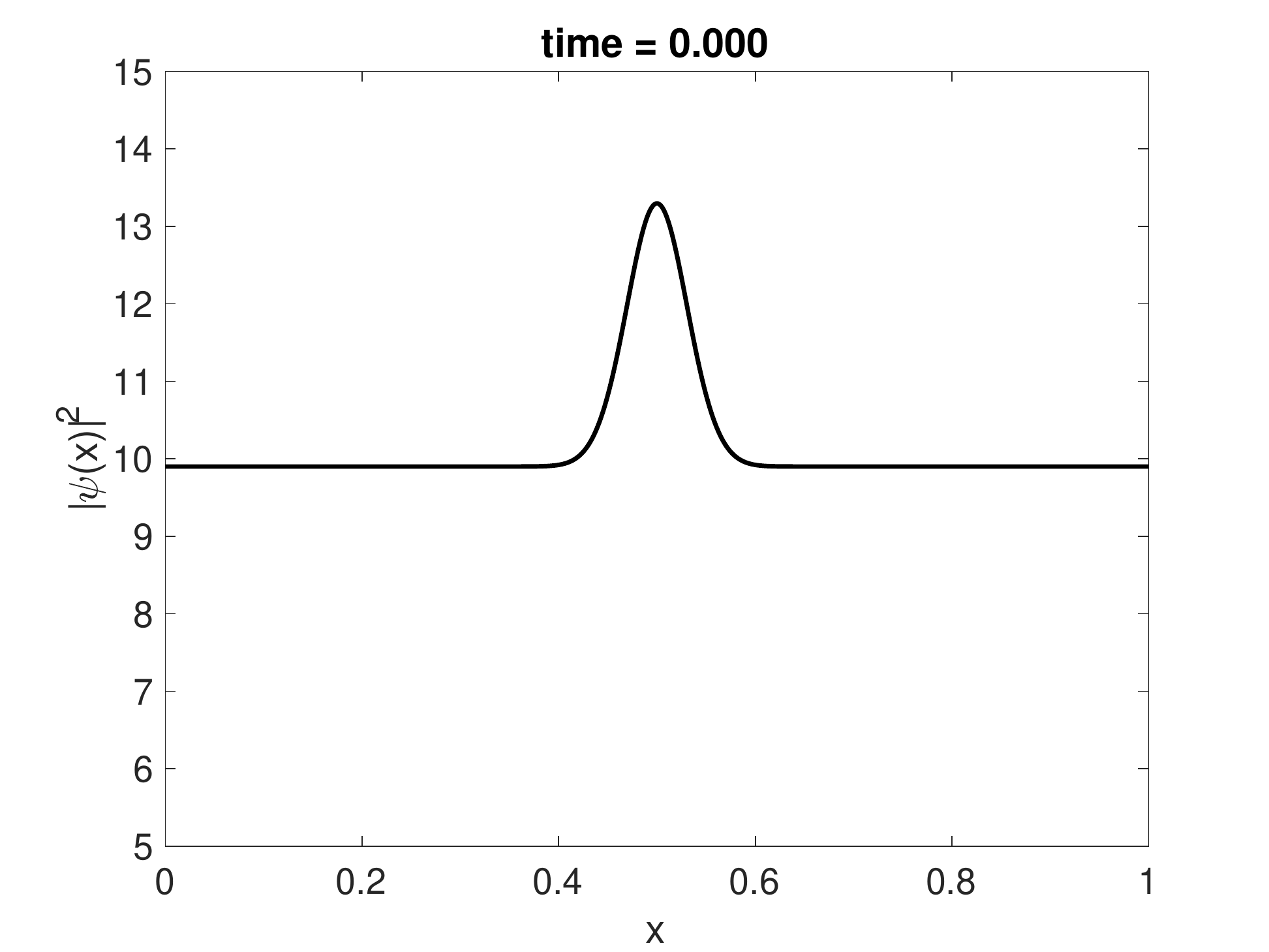} & \includegraphics[scale=0.32]{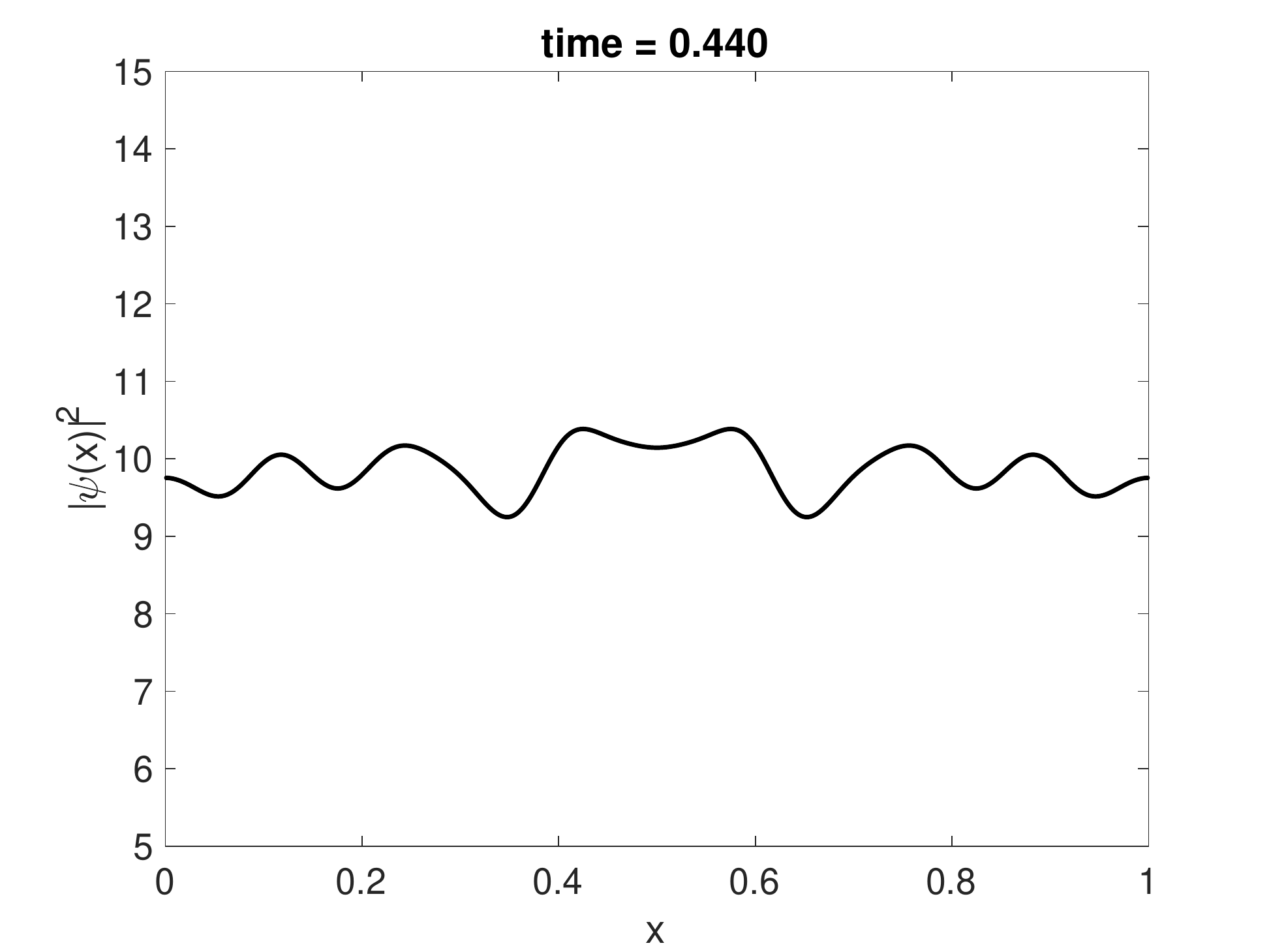}\tabularnewline
\includegraphics[scale=0.32]{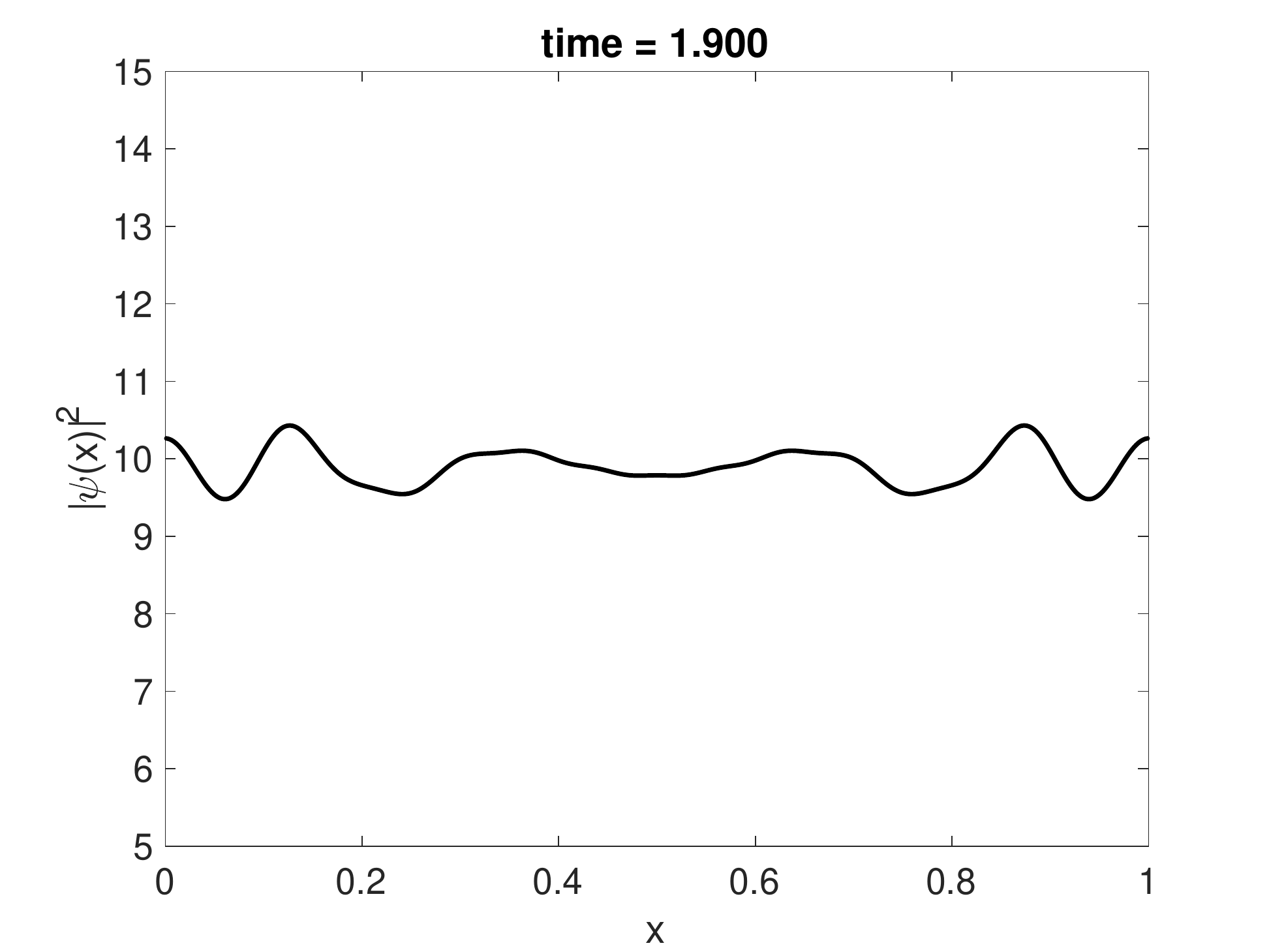} & \includegraphics[scale=0.32]{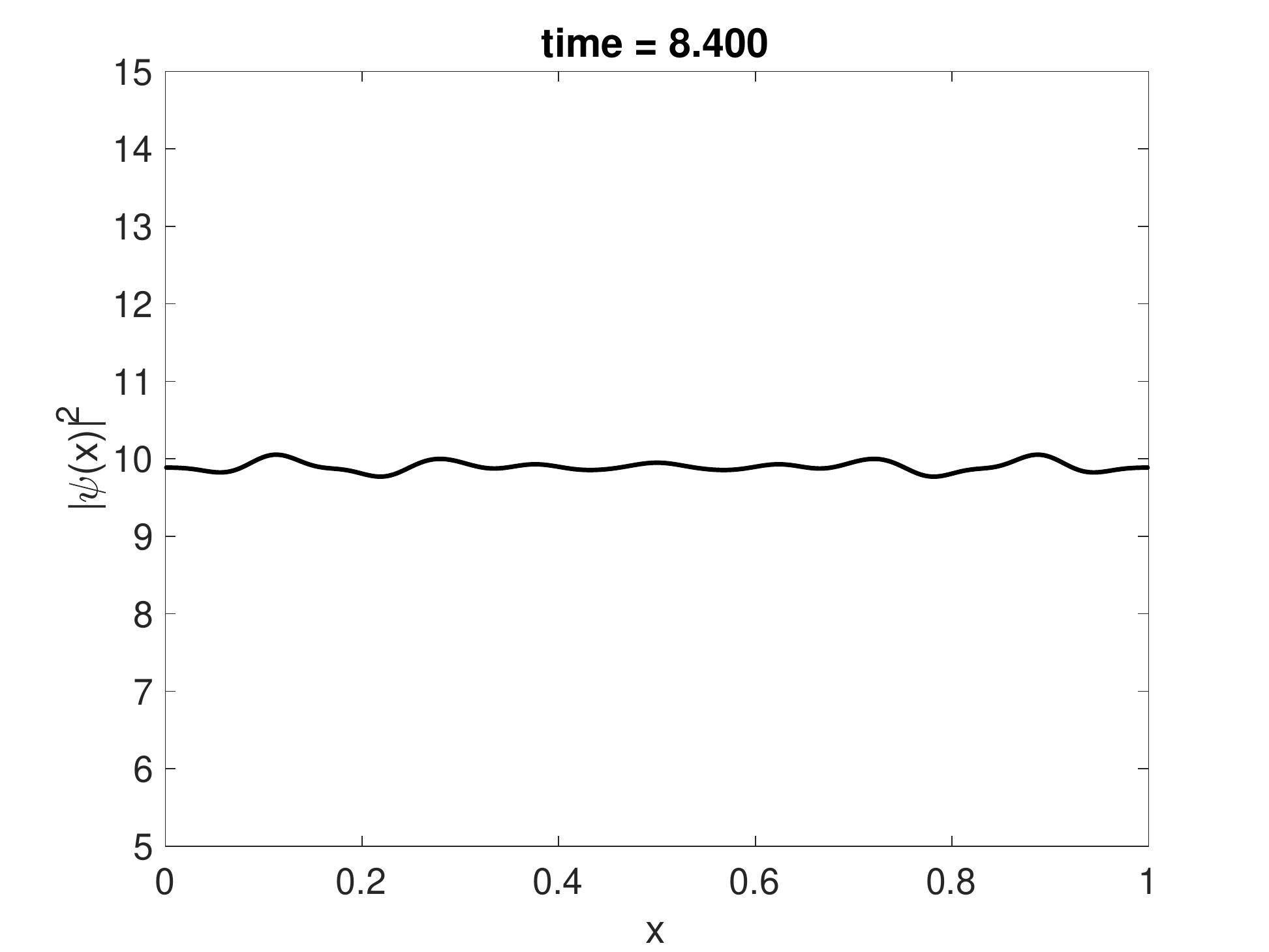}\tabularnewline
\includegraphics[scale=0.32]{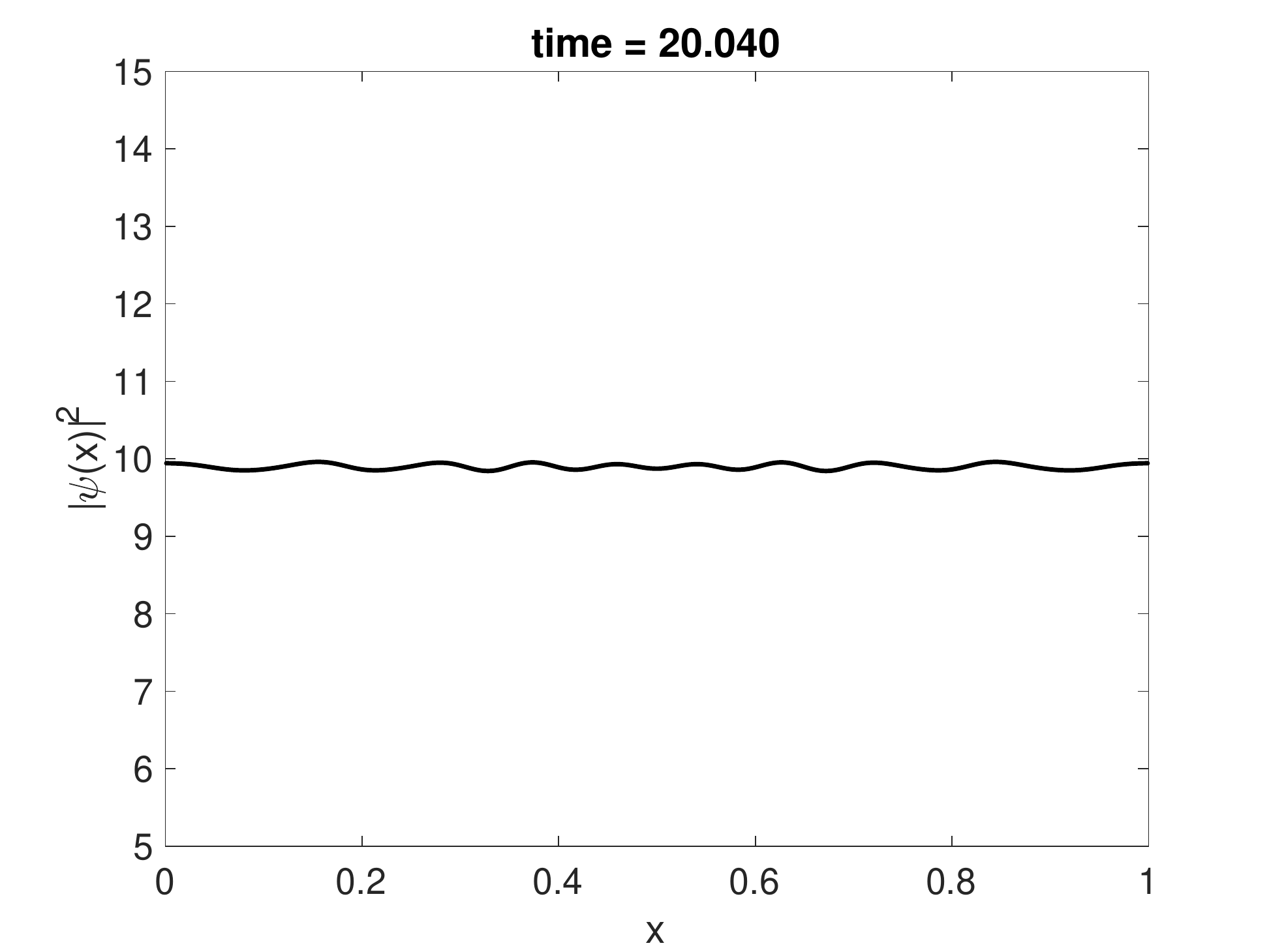} & \includegraphics[scale=0.32]{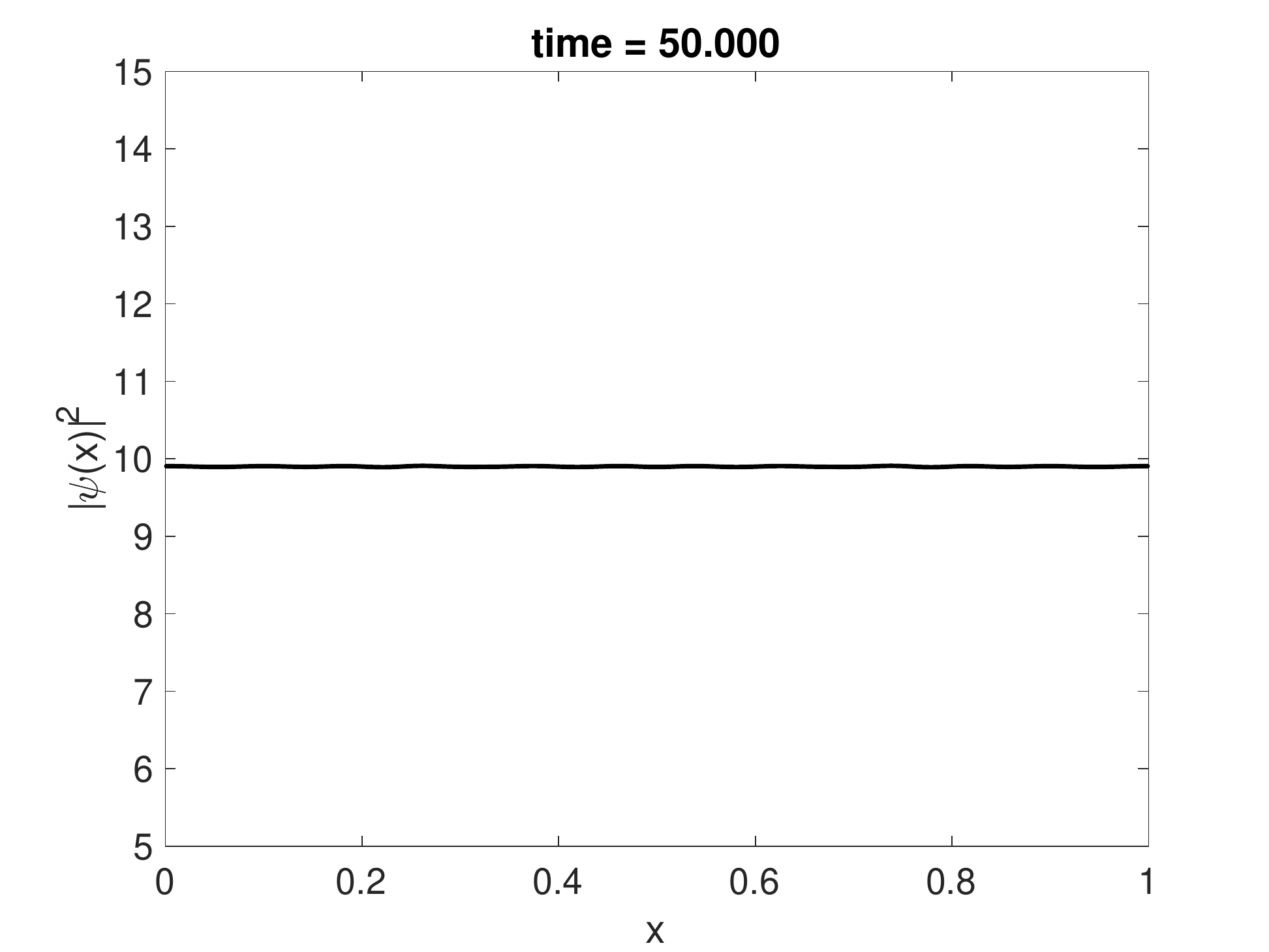}\tabularnewline
\end{tabular}
\par\end{centering}
\caption{Time evolution of the position density of the perturbed stationary
solution corresponding to $\alpha=10$, $\beta=0.1$, $R=1$, and
$P=100$. }\label{fig:Test1}
\end{figure}

Figure \ref{fig:L2_L1_exp1} shows the evolution of (the square of)
the $L^{2}-$norm of $\psi$ and the $L^{1}-$norm of $n$ corresponding to the simulation displayed in Fig. \ref{fig:Test1}.
Furthermore, Fig. \ref{fig:inf_phase} shows the plot of the $L^{2}-$norm
of $\psi$ vs the $L^{1}-$norm of $n$ corresponding to: (0), the simulation
shown in Fig. \ref{fig:Test1}; (1) and (2), the simulations with 
initial conditions shown in Fig. \ref{fig:IC_spiral}. Notice the similarities with the case of space-homogeneous solutions studied in Section \ref{sec:space}; in particular, Fig.
\ref{fig:inf_phase} resembles an asymptotically stable spiral, which should be compared with Fig. \ref{fig:spiral}.

\begin{figure}[H]
\begin{centering}
\begin{tabular}{cc}
\includegraphics[scale=0.32]{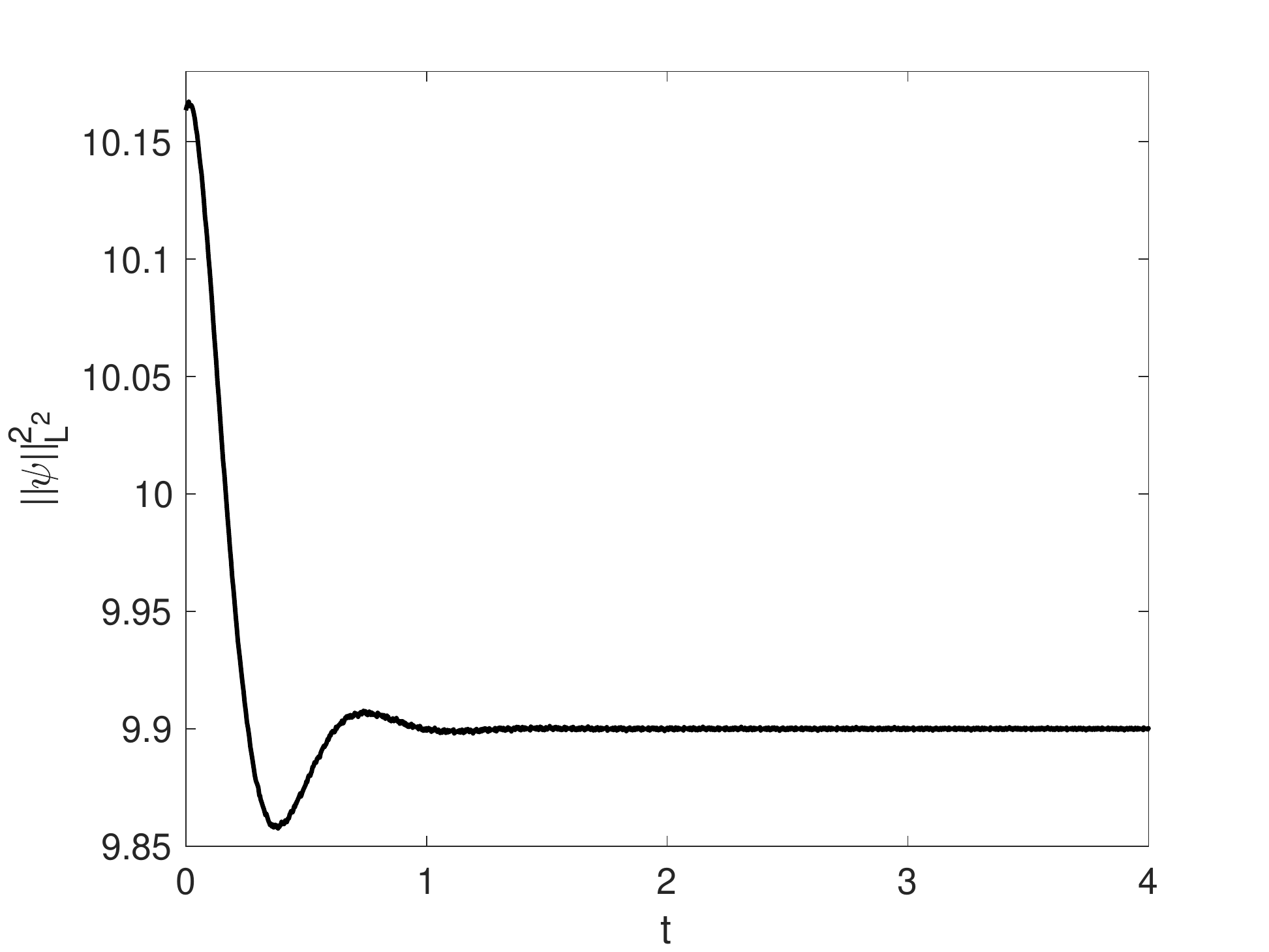} & \includegraphics[scale=0.32]{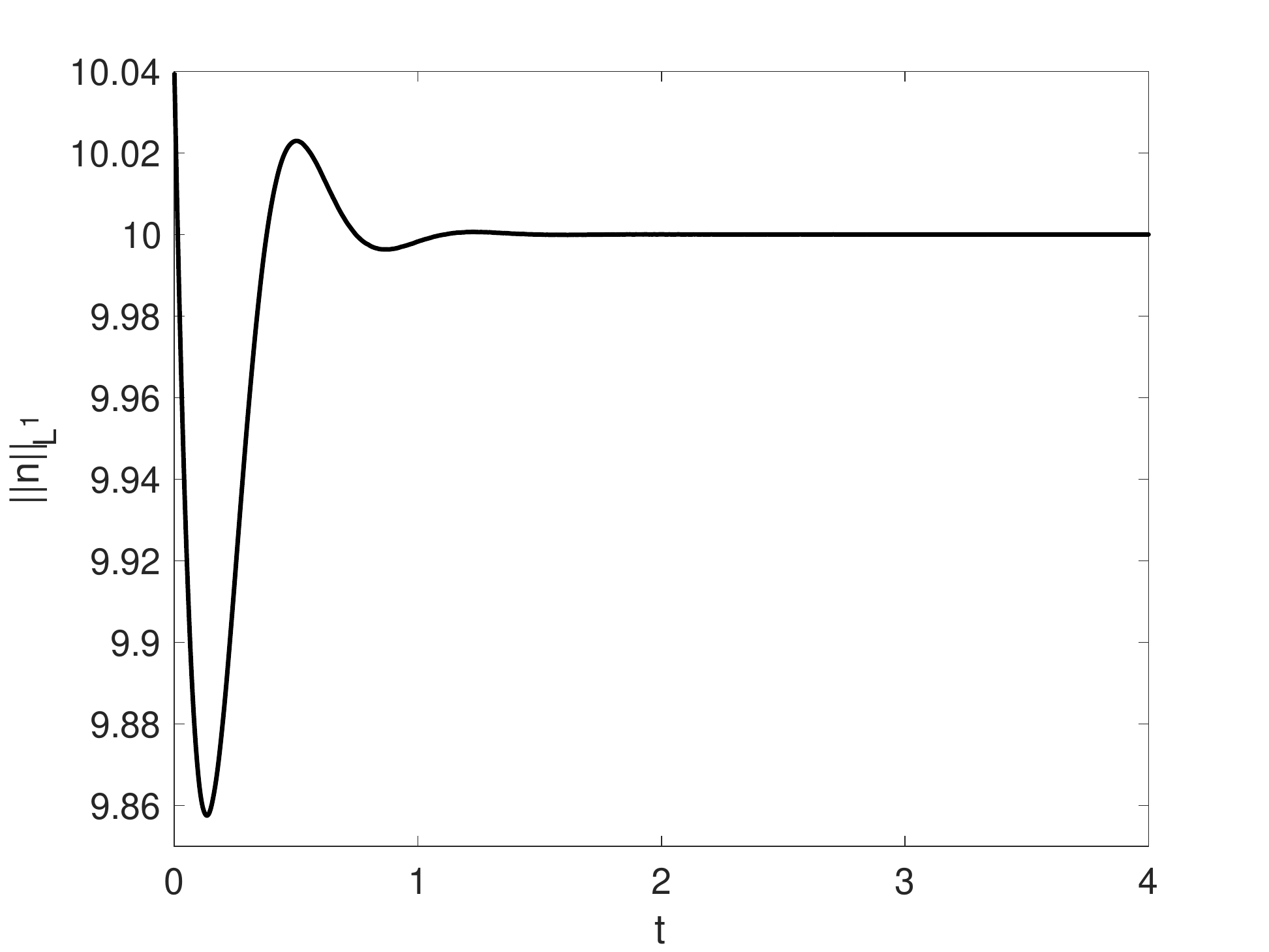}\tabularnewline
\end{tabular}
\par\end{centering}
\caption{Evolution of the $L^{2}-$norm of $\psi$ (left) and the $L^{1}-$norm
of $n$ (right) corresponding to the simulation shown in Fig. (\ref{fig:Test1}).
}\label{fig:L2_L1_exp1}
\end{figure}

\begin{figure}[H]
\begin{centering}
\includegraphics[scale=0.7]{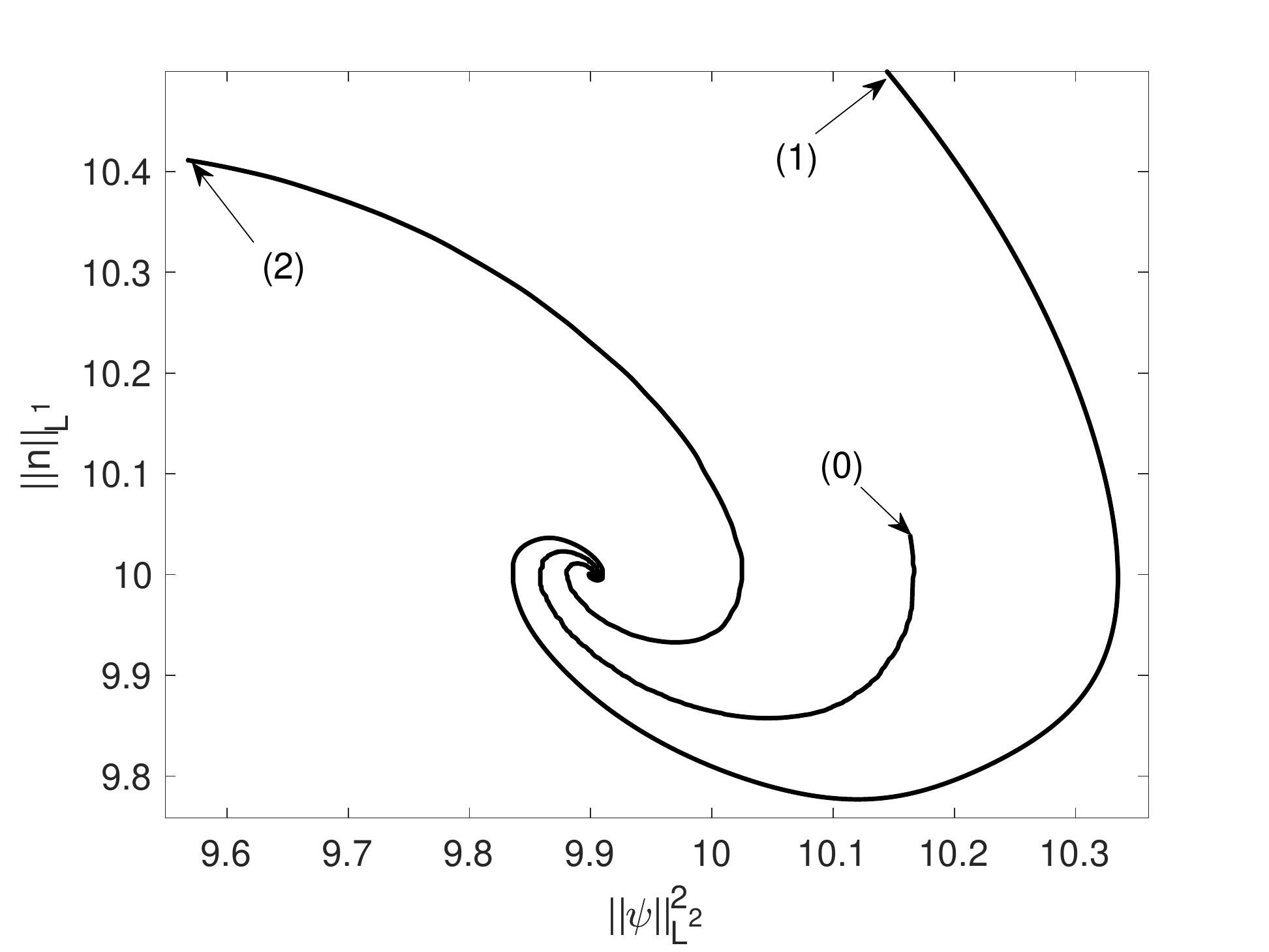}
\par\end{centering}
\caption{$L^{2}-$norm of $\psi$ vs $L^{1}-$norm of $n$ corresponding to: (0), the simulation
shown in Fig. \ref{fig:Test1}; (1) and (2), the simulations with
the initial conditions shown in Fig. \ref{fig:IC_spiral}.}\label{fig:inf_phase}
\end{figure}

\begin{figure}[H]
\begin{centering}
\begin{tabular}{cc}
\includegraphics[scale=0.32]{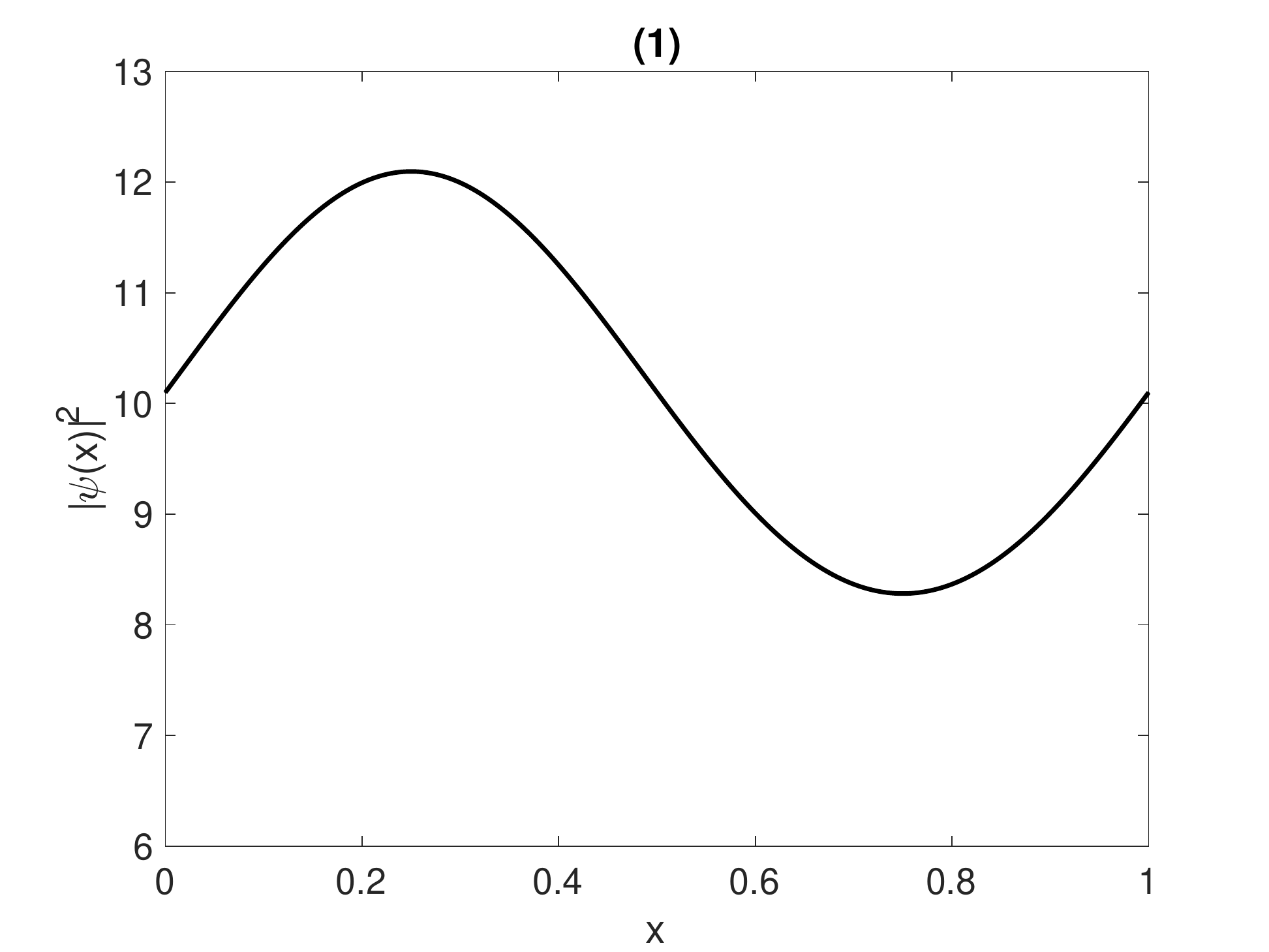} & \includegraphics[scale=0.32]{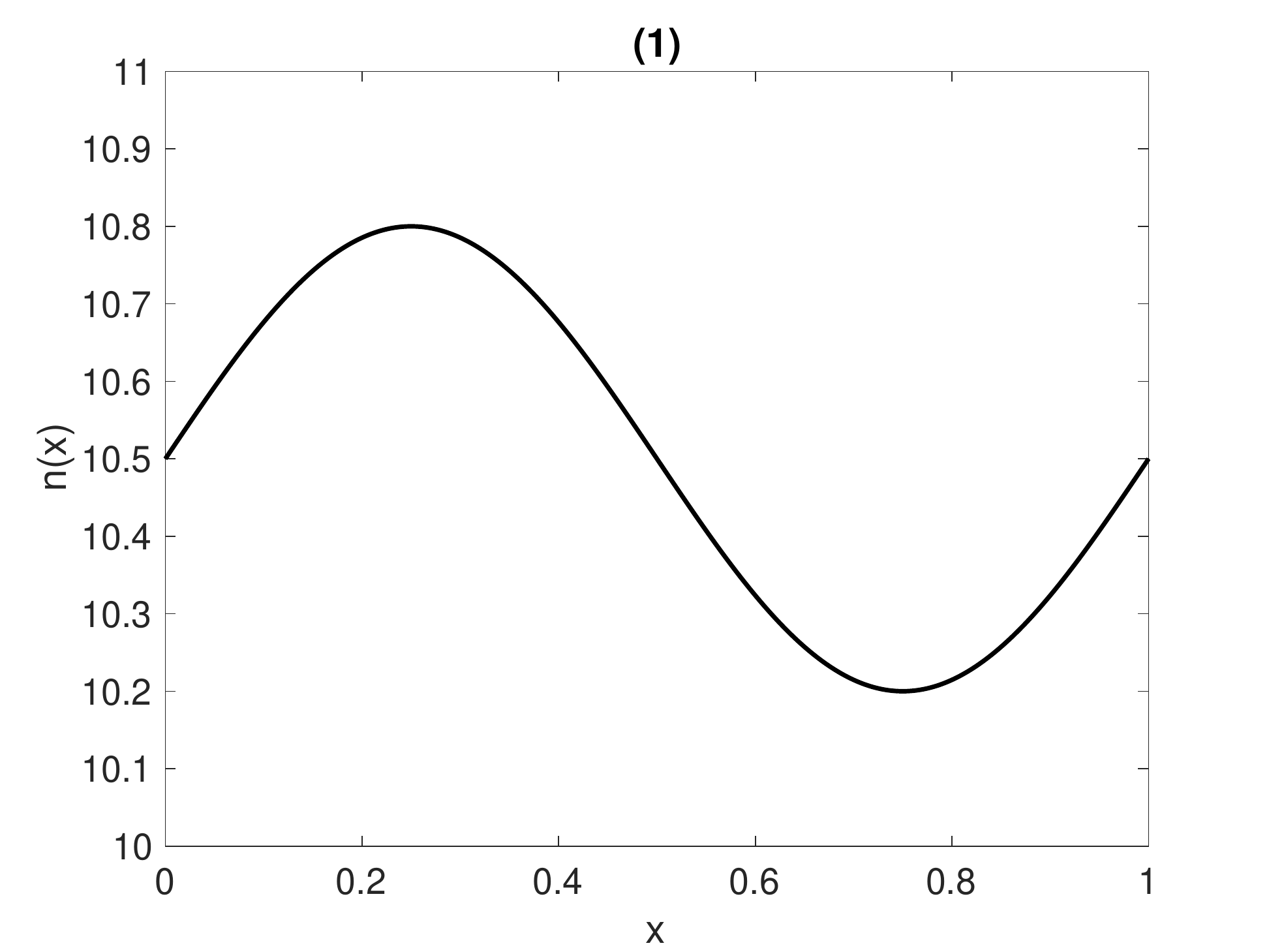}\tabularnewline
\includegraphics[scale=0.32]{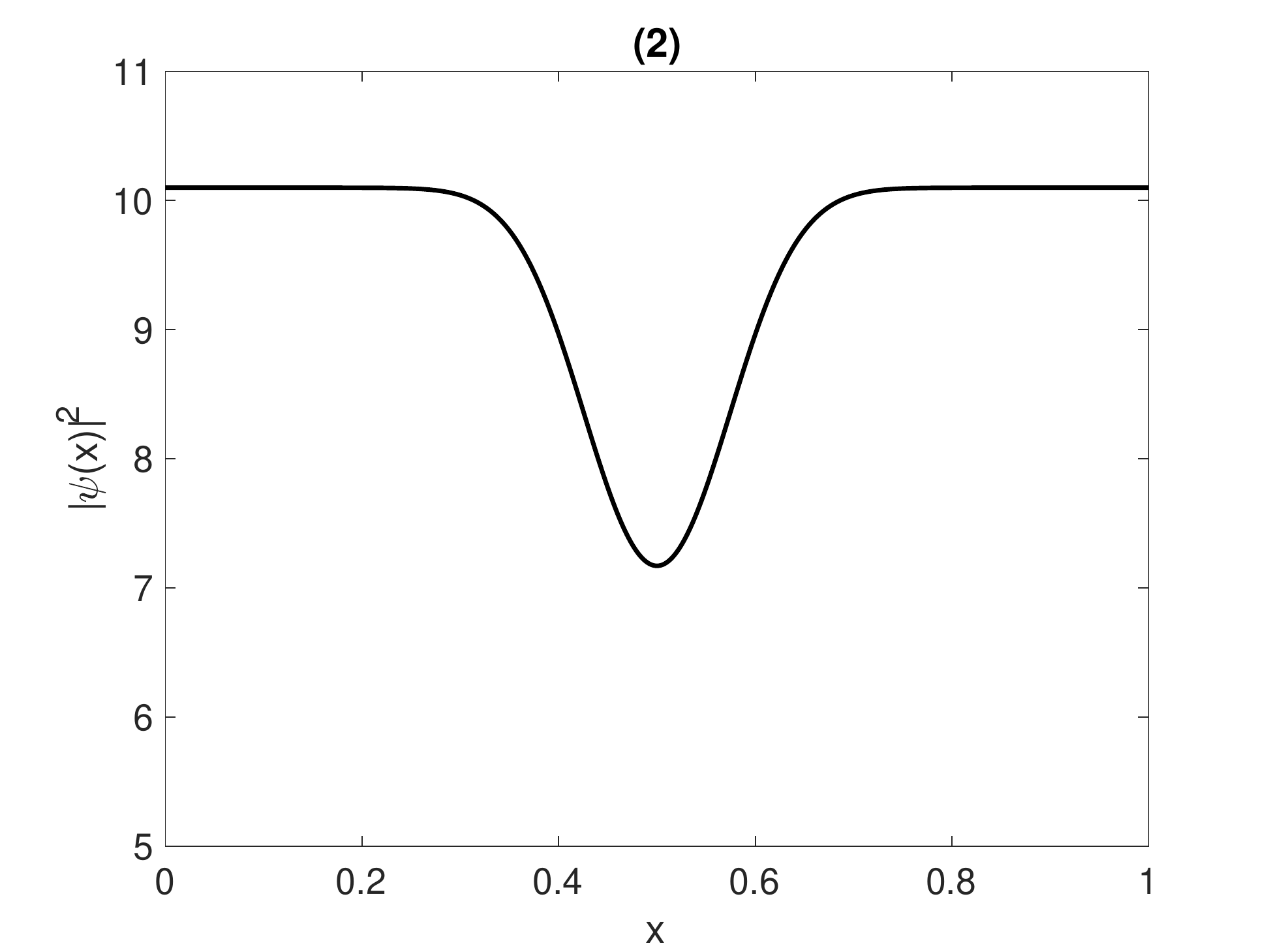} & \includegraphics[scale=0.32]{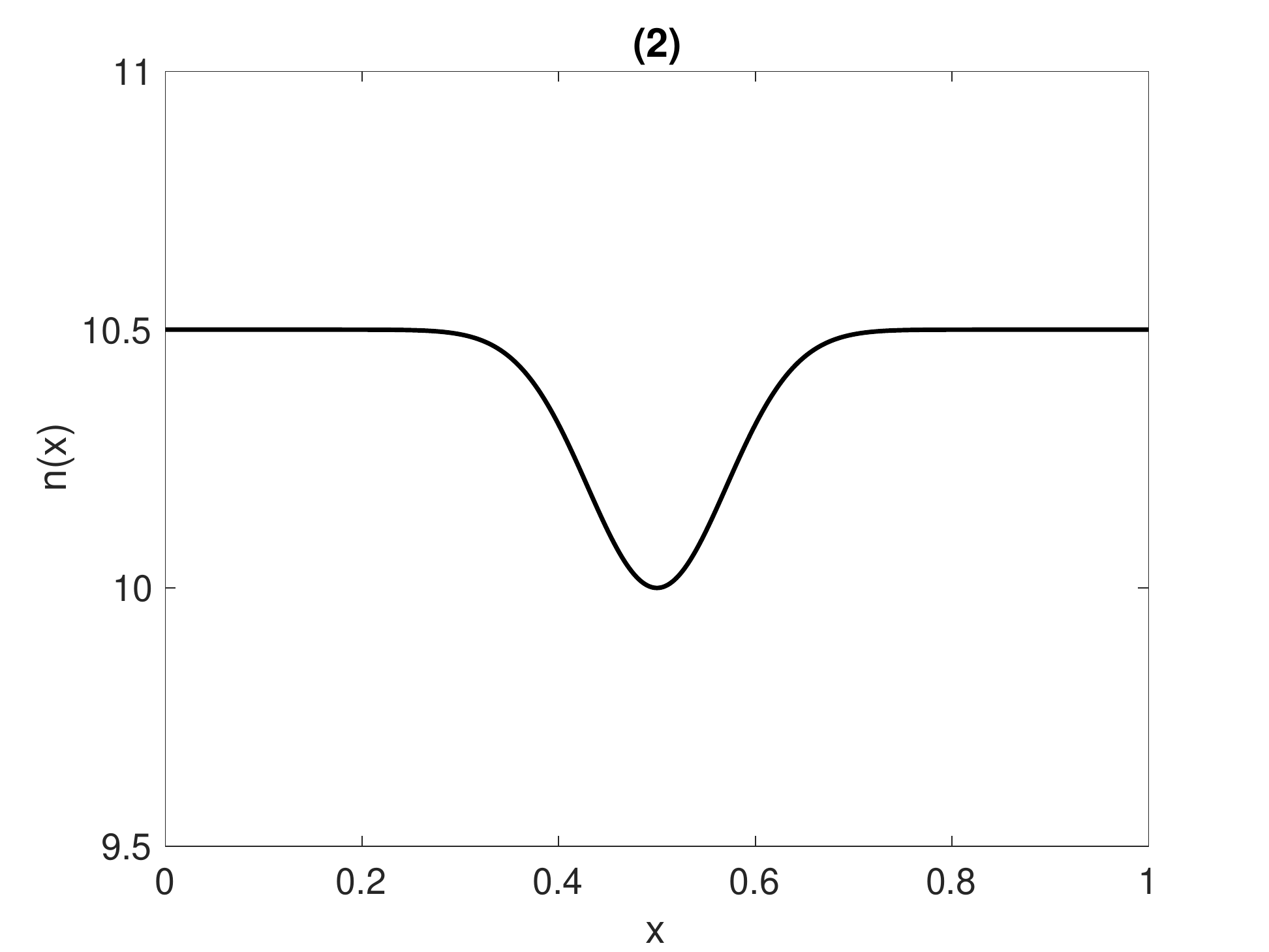}\tabularnewline
\end{tabular}
\par\end{centering}
\caption{Initial conditions for the simulations (1) and (2) depicted in Fig.
\ref{fig:inf_phase} with $\alpha=10$, $\beta=0.1$, $R=1$, and
$P=100$.} \label{fig:IC_spiral}

\end{figure}

\begin{figure}[H]
\begin{centering}
\begin{tabular}{cc}
\includegraphics[scale=0.32]{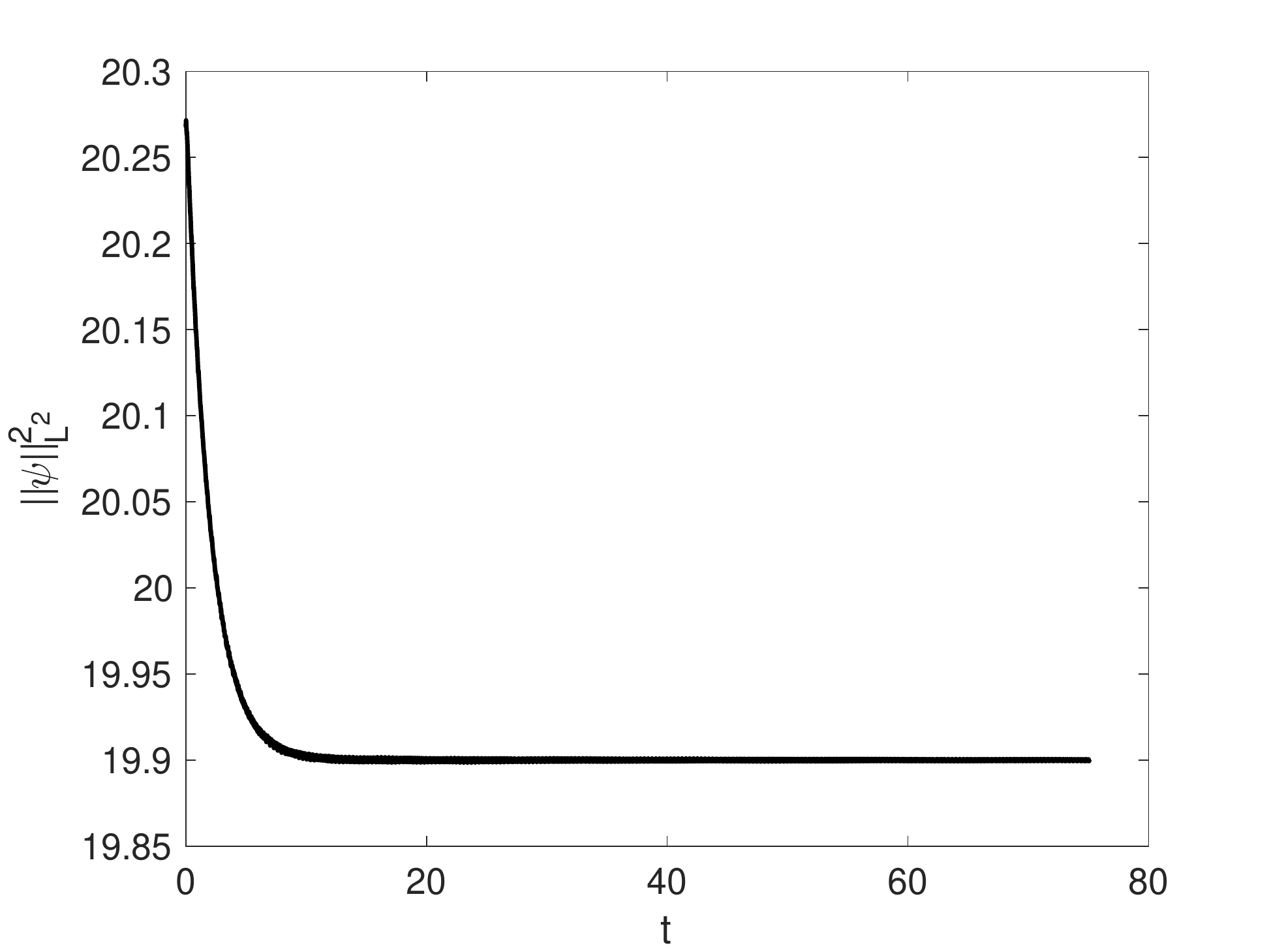} & \includegraphics[scale=0.32]{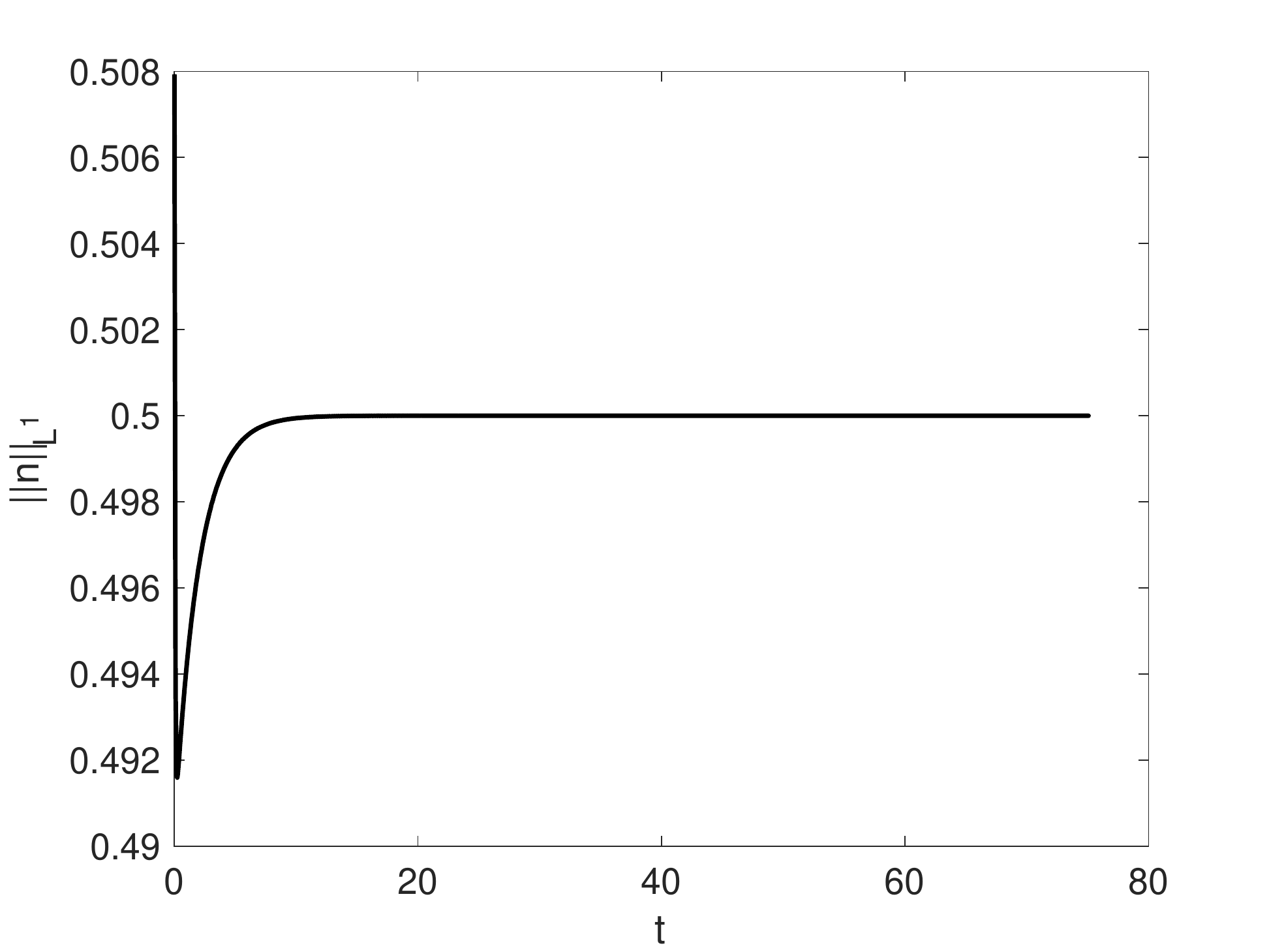}\tabularnewline
\end{tabular}
\par\end{centering}
\caption{Evolution of the (square of the) $L^{2}-$norm of $\psi$ (left) and
the $L^{1}-$norm of $n$ (right) corresponding to the simulation
of the perturbation of the stationary solution for $\alpha=0.5$,
$\beta=0.1$, $P=10$, and $R=1$. }\label{fig:L2_L1_exp2}
\end{figure}

Fig. \ref{fig:L2_L1_exp2} shows the evolution of the (square of the)
$L^{2}-$norm of $\psi$ and the $L^{1}-$norm of $n$
corresponding to the initial data depicted in Fig. \ref{fig:IC_spiral}, and $\alpha=0.5$, $\beta=0.1$, $P=10$, and $R=1$. Moreover,
Fig. \ref{phase_exp2} displays the plot of the $L^{2}-$norm of $\psi$
vs the $L^{1}-$norm of $n$ corresponding to: (0), the simulation shown in Fig.
\ref{fig:L2_L1_exp2}; (1) and (2), the simulations with the initial
conditions shown in Fig. \ref{fig:IC_node}. Like in the previous
case, it is interesting to notice the similarity with space-homogeneous
solutions: Fig. \ref{phase_exp2} resembles an asymptotically stable
node. 

For the case $\beta \gg \alpha$ we have the corresponding phase-space plot represented in Fig (\ref{phase_beta_mgt_alpha}).

\begin{figure}[H]
\begin{centering}
\includegraphics[scale=0.4]{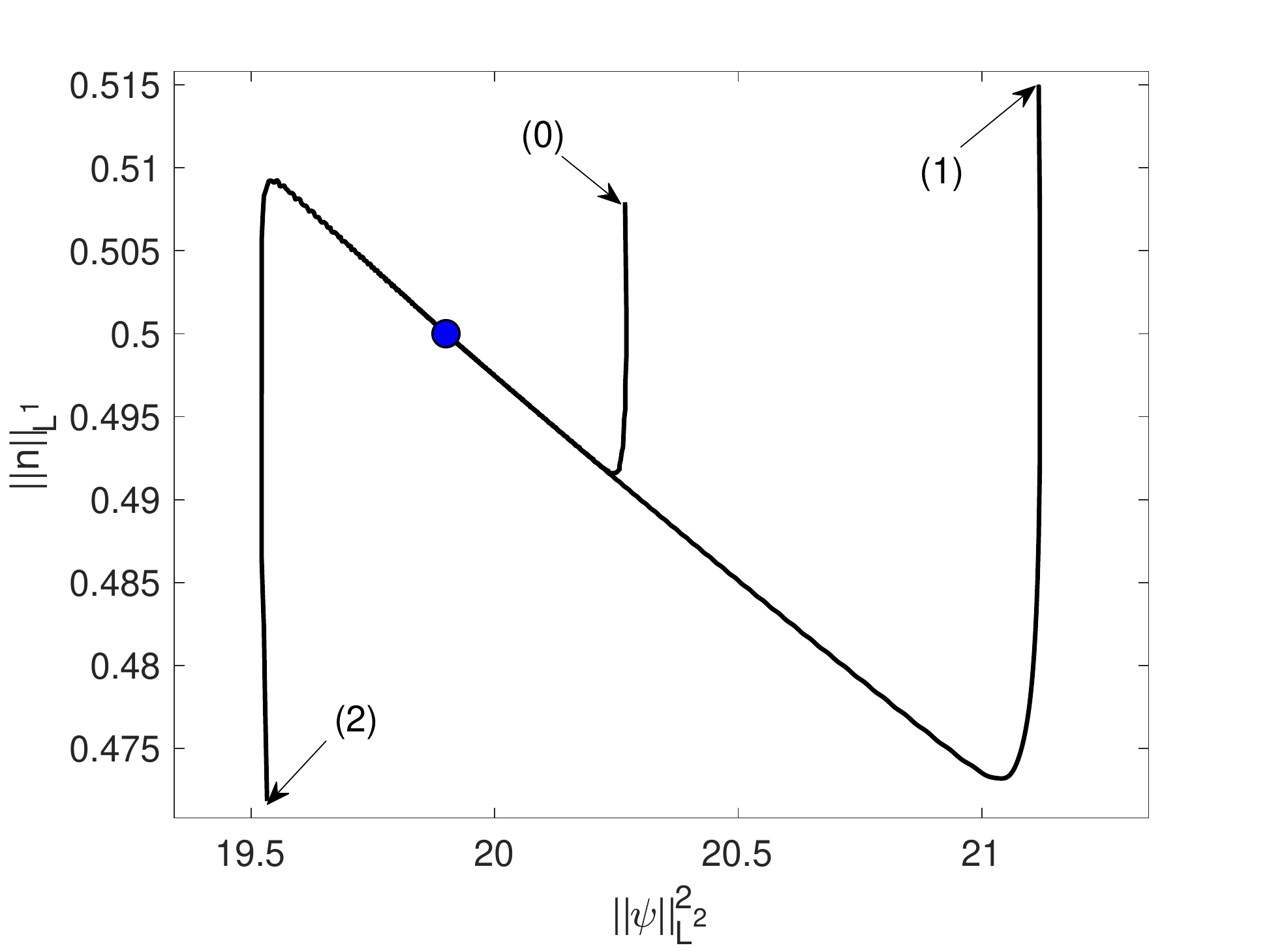}
\par\end{centering}
\caption{$L^{2}-$norm of $\psi$ vs $L^{1}-$norm of $n$ corresponding to: (0), the simulation
shown in Fig. \ref{fig:L2_L1_exp2}; (1) and (2), the simulations
with the initial conditions shown in Fig. \ref{fig:IC_node}. The equilibrium point is represented by the circle. Cf.
Fig. \ref{fig:node}. }\label{phase_exp2}
\end{figure}

\begin{figure}[H]
\begin{centering}
\begin{tabular}{cc}
\includegraphics[scale=0.3]{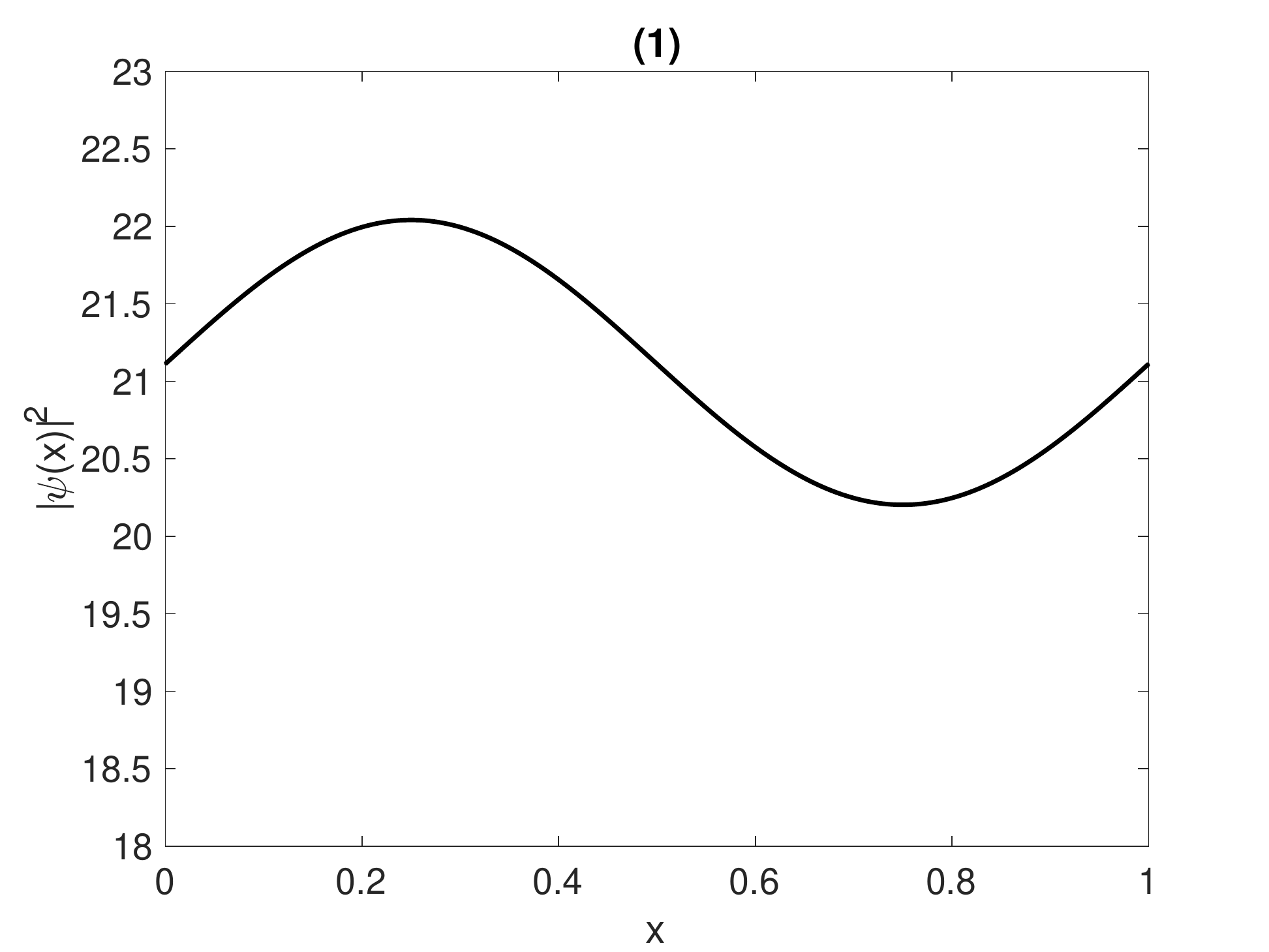} & \includegraphics[scale=0.3]{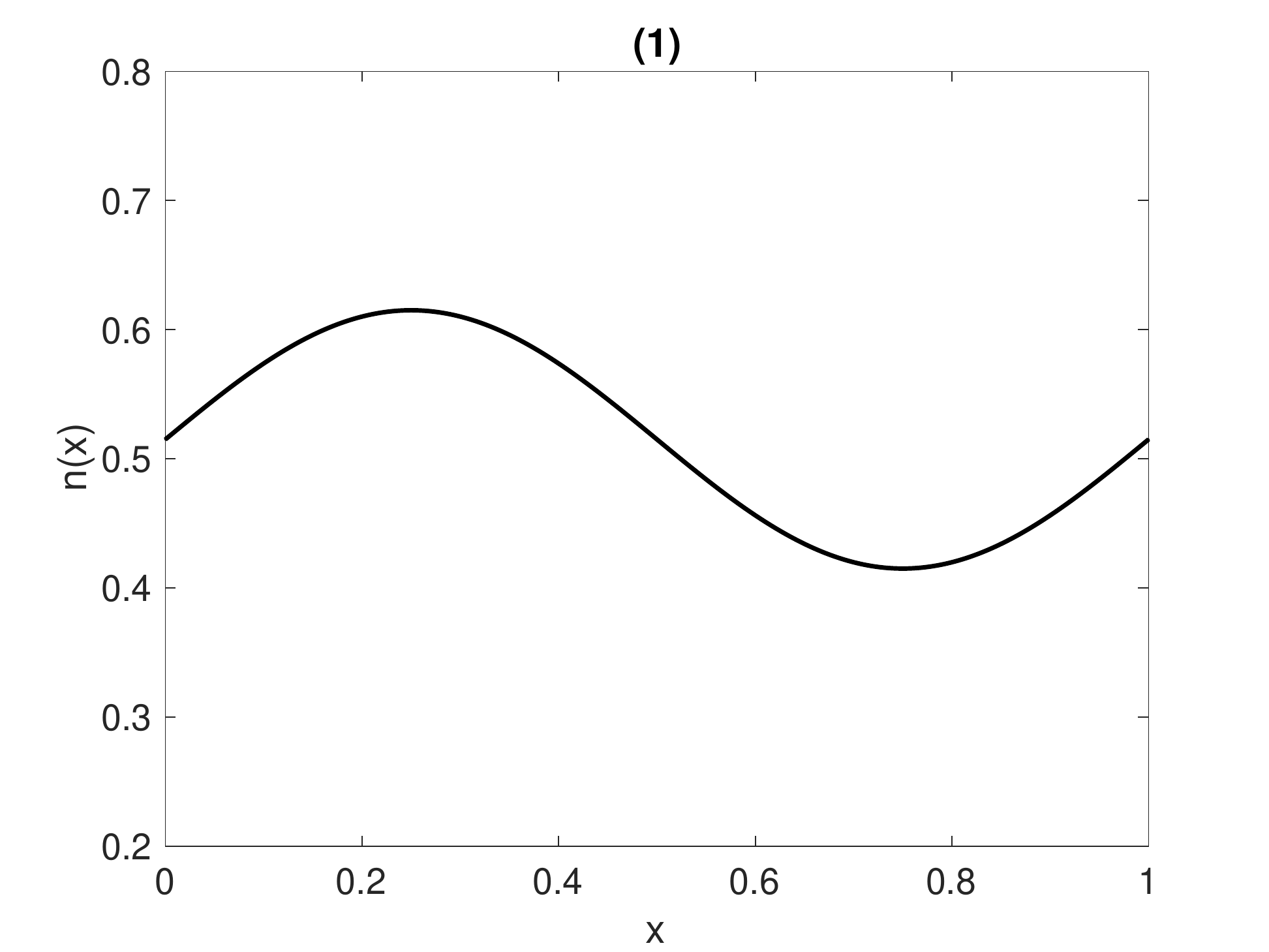}\tabularnewline
\includegraphics[scale=0.3]{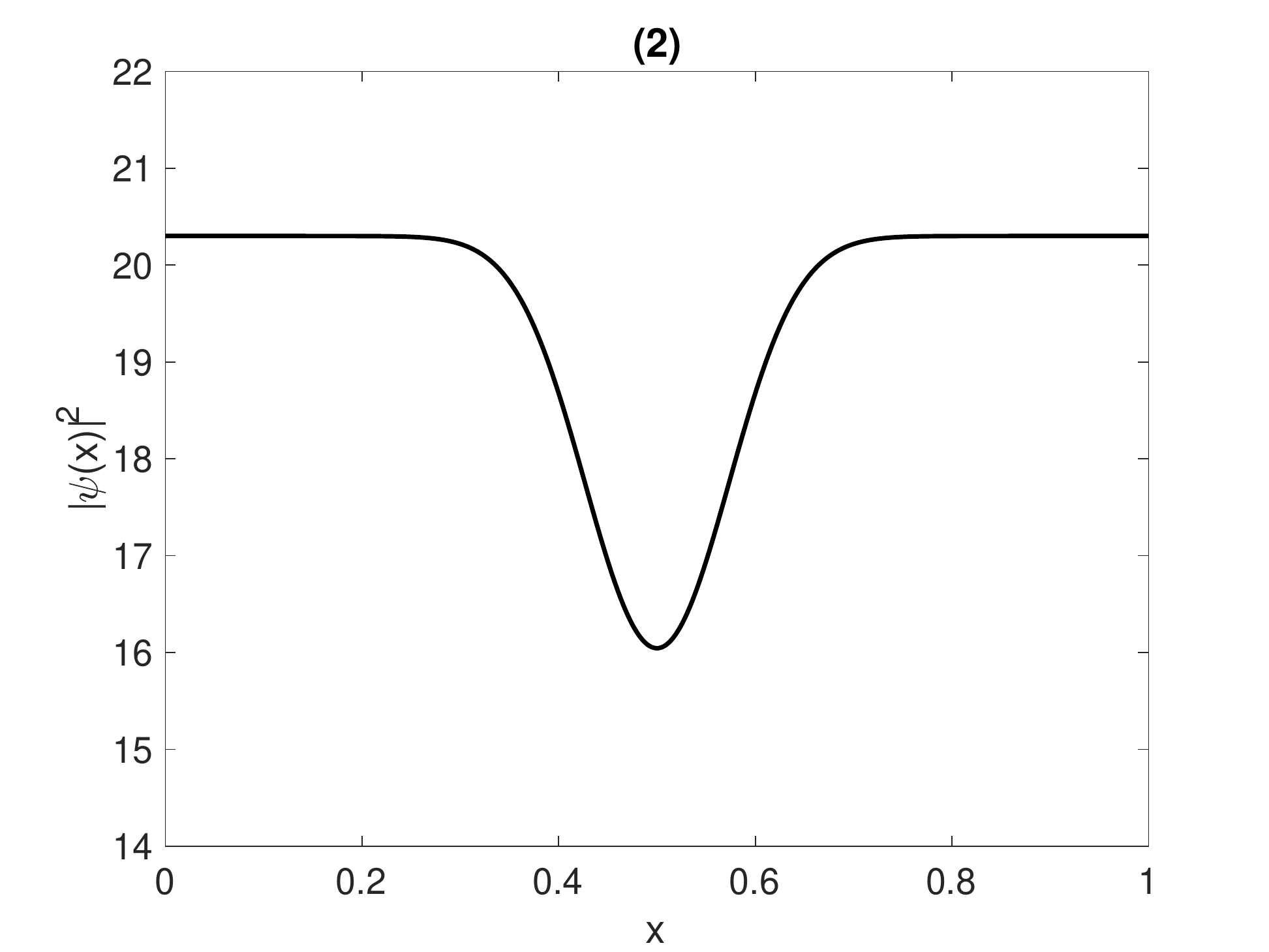} & \includegraphics[scale=0.3]{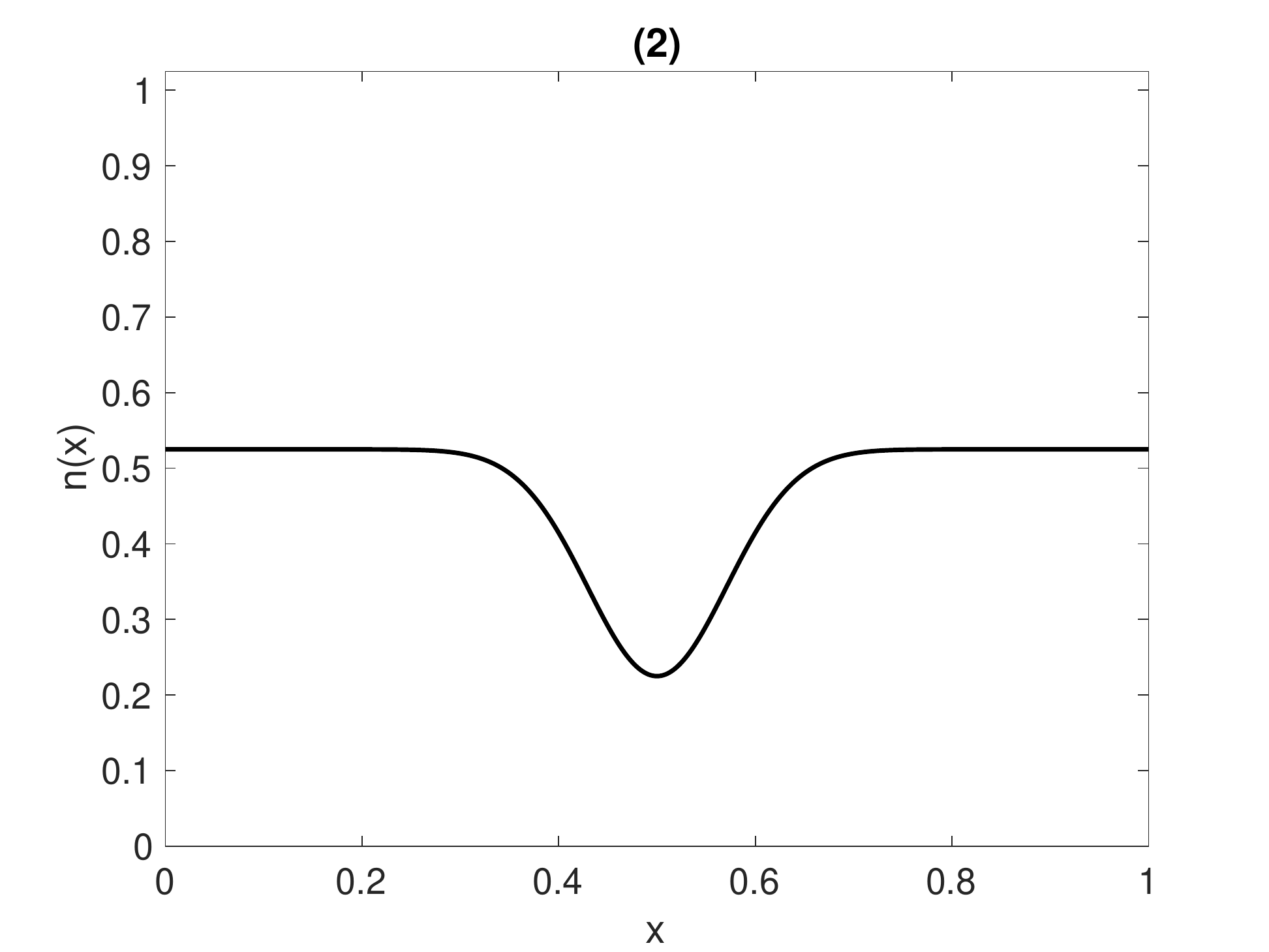}\tabularnewline
\end{tabular}
\par\end{centering}
\caption{Initial conditions for the simulations (1) and (2) depicted in Fig.
\ref{phase_exp2} with $\alpha=0.5$,
$\beta=0.1$, $P=10$, and $R=1$}\label{fig:IC_node}

\end{figure}

\begin{figure}[H]
\begin{centering}
\includegraphics[scale=0.5]{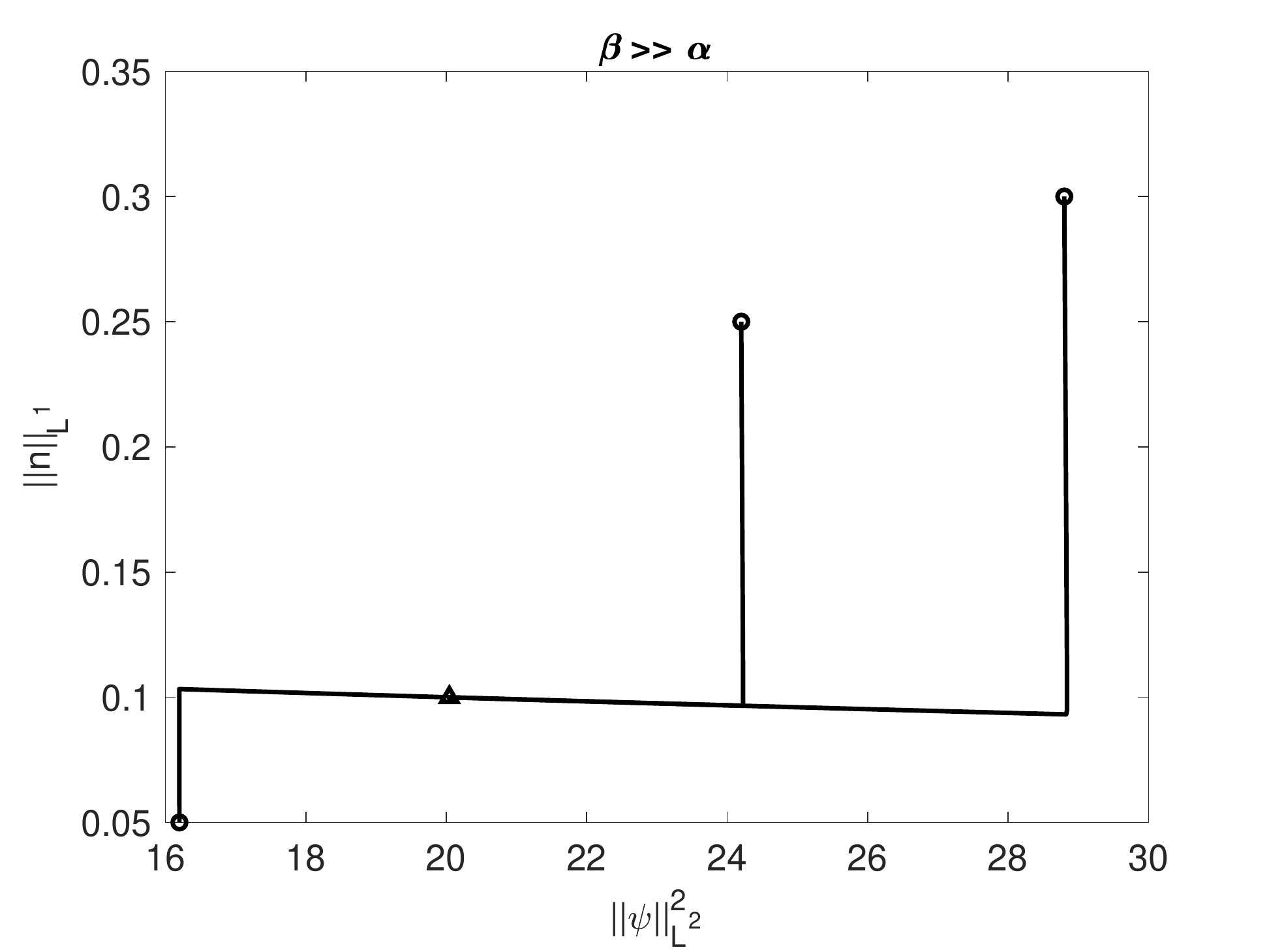}
\par\end{centering}
\caption{$L^{2}-$norm of $\psi$ vs $L^{1}-$norm of $n$ corresponding to the simulations with $\alpha=0.1$, $\beta=100$, $P=12$,
$R=1$, and the initial conditions indicated with the circles.  The equilibrium point is represented by the triangle.}\label{phase_beta_mgt_alpha}
\end{figure}

Finally, we turn to the case with vanishing condensate, i.e. $PR-\alpha\beta<0$: Fig. \ref{fig:Phase_zero}
shows the $L^{2}-$norm of $\psi$ vs the $L^{1}-$norm of $n$ corresponding
to the numerical simulations with $\alpha=10$, $\beta=10$, $P=1$,
$R=1$, and various initial conditions, indicated by circles. Notice
that Fig. \ref{fig:Phase_zero} is similar to an asymptotically stable
node.

\begin{figure}[H]
\begin{centering}
\includegraphics[scale=0.5]{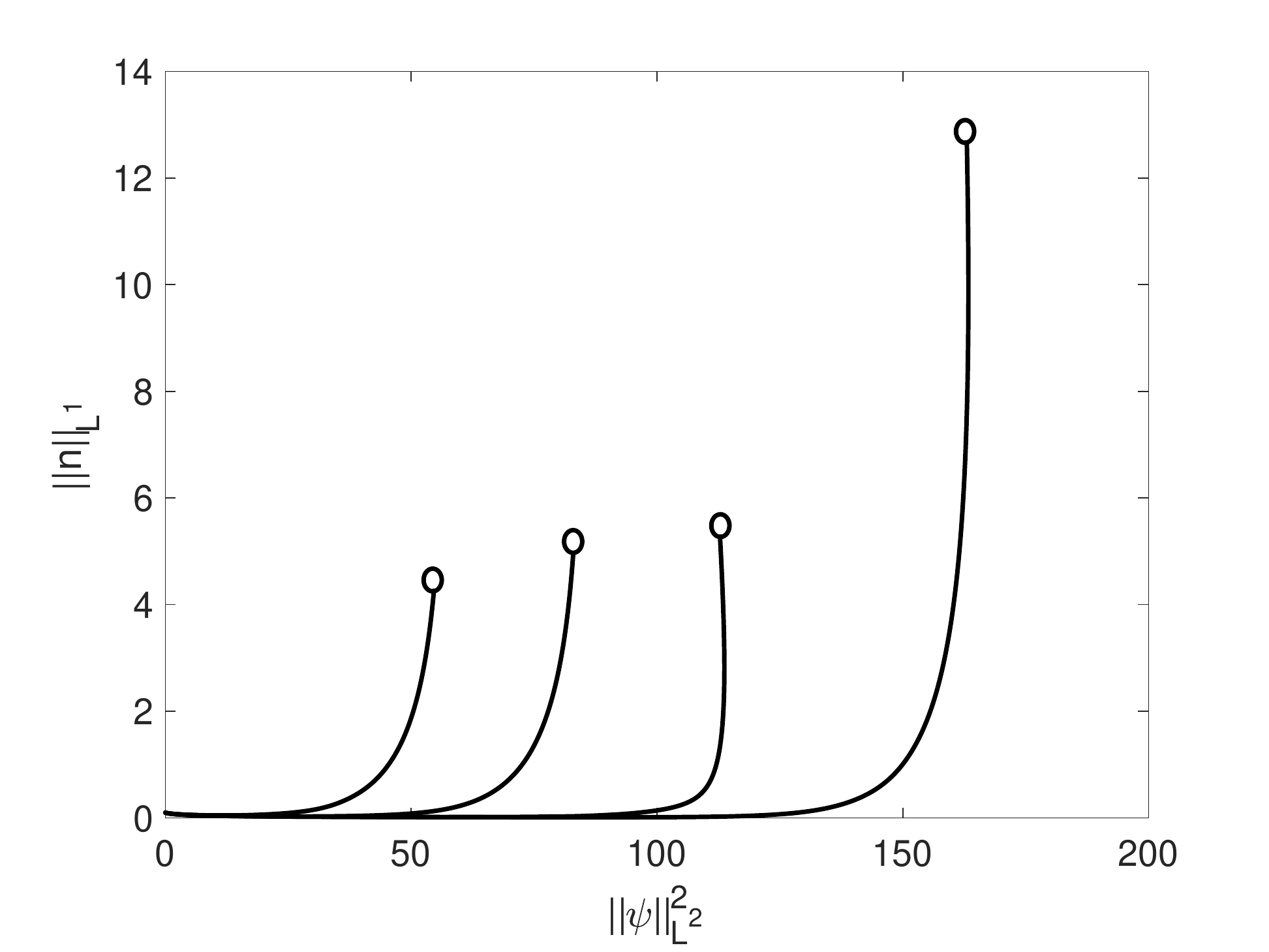}
\par\end{centering}
\caption{$L^{2}-$norm of $\psi$ vs $L^{1}-$norm of $n$ corresponding to
the numerical simulations with $\alpha=10$, $\beta=10$, $P=1$,
$R=1$; the initial conditions are indicated by circles. }\label{fig:Phase_zero}
\end{figure}

 
\section{The adiabatic regime}\label{sec:adiabatic}

In this last section, we look at a particular limiting case, called the {\it adiabatic regime}, cf \cite{BM}. It allows to reduce the 
full model \eqref{GP} to a single equation under the assumption that the reservoir density, $n$, adiabatically follows the change of $|\psi|^2$. 
Formally, one considers the limit $\varepsilon \to 0$ and simply drops the time derivative in the second equation of \eqref{GP}. This allows one to 
rewrite the exciton-density, $n$, via
\begin{equation}\label{nG}
n(t,x) = \frac{P}{\beta+ R|\psi(t,x)|^2}.
\end{equation}
Plugging this into the equation for $\psi$ yields a damped-driven Gross-Pitaevskii equation of the form
\begin{equation}\label{GPD}
i\partial_t\psi=-\frac12\partial_x^2\psi+g|\psi|^2\psi+ \frac{\lambda P \psi}{\beta+ R|\psi |^2} +\frac{i}{2}\left(\frac{PR}{\beta+ R|\psi|^2}-\alpha\right)\psi,
\end{equation}
subject to initial data $\psi |_{t=0} = \psi_0(x)$. 

\begin{remark}
The adiabatic model \eqref{GPD} shares certain similarities with an alternative mean-field equation for exciton-polariton condensates 
introduced in \cite{KeBe} (see also \cite{Si} for a numerical study). The main difference seems to be that in \eqref{GPD}, the damping is linear $\propto \alpha$ 
and the driving (or pumping) is nonlinear, while 
in the model in \cite{KeBe, Si} it is the other way around.
\end{remark}

In the following, we shall derive the result that will allow us to conclude global in-time existence of solution $\psi(t, \cdot) \in H^1(\T)$ of \eqref{GPD}. 
In particular, we shall connect the system \eqref{GP} and the adiabatic equation \eqref{GPD} by 
using the estimates derived in Section \ref{sec:a-priori} and the Aubin-Lions compactness lemma.

\begin{proposition}\label{prop:AdEps}
For $\varepsilon>0$ denote by $(\psi^\varepsilon,n^\varepsilon) \in C([0, \infty); \mathcal H^1(\mathbb T))$ 
the unique global solution of \eqref{GP} subject to initial data $(\psi_0, n_0)\in \mathcal H^1(\mathbb T)$. 
Then, up to extraction of a suitable subsequence, we have for all $T\ge 0$:
\[
\psi^{\varepsilon} \stackrel{\varepsilon \rightarrow 0_+
}{\longrightarrow} \psi\textrm{ in }C([0,T]\times \mathbb{T}),
\]
as well as 
\[
n^{\varepsilon} \stackrel{\varepsilon \rightarrow 0_+
}{\longrightarrow} n \textrm{ in $L^{\infty}((0,T)\times\mathbb{T}))$ weak-${\ast}$}
\]
In addition, for any $T>0$,  $\psi \in C([0,T], H^1(\mathbb T))$ is the unique mild solution of \eqref{GP} with initial data $\psi_0\in H^1(\T)$.
\end{proposition}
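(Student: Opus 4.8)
The plan is to derive the convergence through uniform-in-$\varepsilon$ compactness estimates and then identify the limit as the unique mild solution of the adiabatic equation \eqref{GPD}. The starting point is the collection of a-priori bounds from Section \ref{sec:a-priori}, all of which are uniform in $\varepsilon\in(0,1]$. Specifically, Lemma \ref{lem:mas} gives a uniform $L^2$-bound on $\psi^\varepsilon$, Lemma \ref{lem:n_inf} gives a uniform $L^\infty$-bound on $n^\varepsilon$ (crucially not degenerating as $\varepsilon\to 0$), and the energy estimate of Proposition \ref{prop:energy} together with the argument in Theorem \ref{thm:global} gives a uniform $H^1$-bound on $\psi^\varepsilon$ on each interval $[0,T]$. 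From these, $\{\psi^\varepsilon\}$ is bounded in $L^\infty((0,T);H^1(\T))$ and $\{n^\varepsilon\}$ is bounded in $L^\infty((0,T)\times\T)$, so Banach-Alaoglu immediately yields the weak-$\ast$ convergence $n^\varepsilon \rightharpoonup n$ along a subsequence.

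To upgrade $\psi^\varepsilon$ to strong convergence I would invoke the Aubin-Lions lemma. First I would establish a uniform bound on $\partial_t\psi^\varepsilon$ in $L^\infty((0,T);H^{-1}(\T))$: reading off the first equation of \eqref{GP}, the term $-\tfrac12\partial_x^2\psi^\varepsilon$ is bounded in $H^{-1}$ by the $H^1$-bound, while $g|\psi^\varepsilon|^2\psi^\varepsilon$, $\lambda n^\varepsilon\psi^\varepsilon$, and $\tfrac{i}{2}(Rn^\varepsilon-\alpha)\psi^\varepsilon$ are all bounded in $L^2\subset H^{-1}$ using the $H^1$-bound (hence $L^\infty$-bound via Sobolev embedding on $\T$) on $\psi^\varepsilon$ and the $L^\infty$-bound on $n^\varepsilon$. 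With $H^1 \hookrightarrow\hookrightarrow C(\T) \hookrightarrow H^{-1}$ — the first embedding being compact in one dimension — Aubin-Lions gives a subsequence along which $\psi^\varepsilon\to\psi$ strongly in $C([0,T]\times\T)$, which is precisely the first claimed mode of convergence, and $\psi\in C([0,T];H^1(\T))$ follows by weak-$\ast$ lower semicontinuity of the $H^1$-norm.

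The main obstacle — and the decisive step — is passing to the limit in the nonlinear reservoir coupling to show that the limit $n$ equals $P/(\beta+R|\psi|^2)$ rather than merely some weak-$\ast$ limit. Here I would exploit the strong convergence of $\psi^\varepsilon$: since $\psi^\varepsilon\to\psi$ uniformly, $R|\psi^\varepsilon|^2+\beta \to R|\psi|^2+\beta$ uniformly and stays bounded below by $\beta>0$, so one can solve the second equation of \eqref{GP} for $n^\varepsilon$ and show that the algebraic relation survives the limit. Concretely, writing $\varepsilon\partial_t n^\varepsilon = P-(R|\psi^\varepsilon|^2+\beta)n^\varepsilon$ and testing against a smooth test function, the left side is $O(\varepsilon)$ because $n^\varepsilon$ is bounded (and one can control $\partial_t n^\varepsilon$ in a weak sense from the equation itself), forcing the identity
\[
(R|\psi|^2+\beta)\,n = P
\]
in the sense of distributions, whence $n = P/(\beta+R|\psi|^2)$ by the uniform positive lower bound on the denominator. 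This also pins down the weak-$\ast$ limit uniquely, so in fact the whole family converges, not just a subsequence.

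Finally, I would substitute $n=P/(\beta+R|\psi|^2)$ into the (weak formulation of the) first equation of \eqref{GP} and pass to the limit term by term using the strong convergence of $\psi^\varepsilon$ and the now-explicit form of $n$; this identifies $\psi$ as a solution of \eqref{GPD}. Uniqueness of the mild solution in $C([0,T];H^1(\T))$ follows from a standard contraction/Gr\"onwall argument: the nonlinearities $g|\psi|^2\psi$, $\lambda P\psi/(\beta+R|\psi|^2)$, and $\tfrac{i}{2}(PR/(\beta+R|\psi|^2)-\alpha)\psi$ are all locally Lipschitz on bounded sets of $H^1(\T)$ (the denominators being bounded below by $\beta>0$), so Duhamel's formula for the free Schr\"odinger group gives local well-posedness and hence uniqueness, and the a-priori $H^1$-bound promotes this to all $T>0$.
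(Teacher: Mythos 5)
Your argument is correct and follows essentially the same route as the paper: uniform-in-$\varepsilon$ bounds from Lemmas \ref{lem:mas}, \ref{lem:n_inf} and Proposition \ref{prop:energy}, an $H^{-1}$-bound on $\partial_t\psi^\varepsilon$ feeding into Aubin--Lions for strong convergence of $\psi^\varepsilon$, identification of $n=P/(\beta+R|\psi|^2)$ by testing the second equation and letting the $O(\varepsilon)$ time-derivative term vanish, and a fixed-point/Gr\"onwall argument for uniqueness of the limit. Your additional remark that the uniqueness of the limit upgrades subsequential convergence to convergence of the whole family is a valid bonus not stated in the paper.
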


\begin{proof}
To pass to the adiabatic limit, $\varepsilon\rightarrow0$, in \eqref{GP} we
observe that the energy estimate given by Proposition \ref{prop:energy} implies that for all $T\ge 0$: $\psi^{\varepsilon}\in L^{\infty}\left(\left(0,T\right);{H}^{1}\left(\mathbb{T}\right)\right)$,
uniformly as $\varepsilon\rightarrow0$. As a consequence of Lemma \ref{lem:n_inf} and the continuous embedding $H^{1}\left(\mathbb{T}\right) \hookrightarrow
L^{\infty}\left(\mathbb{T}\right)$, we also have that for all $T>0$: $n^{\varepsilon},\psi^{\varepsilon}\in L^{\infty}\left((0,T)\times \mathbb{T}\right)$,
uniformly as $\varepsilon\rightarrow0$. 
From the first equation in
\eqref{GP}, we thus infer, by inspection, that $\partial_t \psi^{\varepsilon}\in L^{\infty}\left(\left(0,T\right);H^{-1}\left(\mathbb{T}\right)\right)$
uniformly as $\varepsilon\rightarrow0$. Now the Aubin-Lions lemma (see, e.g., \cite{Sim})
shows that, after extraction of a suitable subsequence,
\[
\psi^{\varepsilon}\stackrel{\varepsilon \rightarrow 0_+
}{\longrightarrow} \psi\textrm{ in }C([0,T]\times \mathbb{T}),\quad \forall \, T\ge 0,
\]
since $H^{1}\left(\mathbb{T}\right)$ embeds compactly into $C\left(\mathbb{T}\right)$
in one space dimension. Furthermore, we have that, after extraction of
a subsequence, $n^{\varepsilon}\rightharpoonup n$ in in $L^{\infty}((0,T)\times\mathbb{T}))$ weak-${\ast}$. 

To identify the limit, we multiply both equation in \eqref{GP} by test-functions $\varphi_1, \varphi_2\in C_0^\infty([0, \infty)\times \mathbb T)$ 
and pass to the limit $\varepsilon \to 0$ in the associated weak formulation (which is possible due to the strong convergence of $\psi^\varepsilon$ in the uniform topology). 
Note that we thereby lose the value of $n^\varepsilon \rightharpoonup n$ at $t=0$, to obtain
\[
0 = \iint \left(P-(R|\psi|^2+\beta)n\right)  \varphi_2 \, dt \, dx, \quad \forall \varphi_2 \in C_0^\infty([0, \infty)\times \mathbb T),
\]
i.e., the distributional reformulation of \eqref{nG}. 
This shows that, for all $T>0$, the limiting pair $(\psi, n)$ is a distributional solution of \eqref{GPD}, after $n$ has been computed via \eqref{nG}. In addition, we have that 
the limit $\psi  \in C([0,T], H^1(\mathbb T))$ has finite energy, $E(t)$.
A standard fixed point argument, similar to the one outlined in the appendix, allows us to obtain a unique local in-time solution, $\psi$, in the same 
class of $H^1$-solutions with finite energy. Since the latter is unique, it must coincide with the limiting function $\psi$ obtained before, which exists for all times $T>0$. 
In summary, this yields a unique global in-time solution of \eqref{GPD} with finite energy.
\end{proof}

\begin{remark}
For a sequence of solutions on the two- or three-dimensional
torus and on a time interval $\left(0,T\right)$, the Aubin-Lions
compactness argument gives weaker results, i.e., for all $T\ge 0$:
\[
\psi^{\varepsilon}\stackrel{\varepsilon \rightarrow 0_+
}{\longrightarrow}\psi\textrm{ in }C\left(\left[0,T\right];L^{q}\left(\mathbb{T}\right)\right)
\]
with $1\le q<\infty$ for $d=2$ and $1\le q<6$ for $d=3$. With
$n^{\varepsilon}\rightharpoonup n$ in $L^{\infty}((0,T)\times\mathbb{T}))$
weak-${\ast}$, this is sufficient to identify the limit $\left(\psi,n\right)$
as an energy-bounded solution of the adiabatic equation \eqref{GPD}. 
\end{remark}

On the other hand, we can directly pass to the limit $\varepsilon \to 0$ in the estimate stated in Lemma \ref{lem:mas} to conclude that 
the solution of \eqref{GPD} satisfies:
\[
\lVert \psi \rVert_{L^2}^2 \le  \left(\lVert \psi_0 \rVert_{L^2}^2 - \frac{P|\T|}{\alpha} \right) e^{-\alpha t} + \frac{P|\T|}{\alpha},
\]
and thus
\[
\limsup_{t \rightarrow + \infty} \, \lVert \psi(t,\cdot) \rVert_{L^2}^2 \le \frac{P|\T|}{\alpha}.  
\]

Finally, a direct computation also shows that in the case of vanishing condensate density the solution exponentially converges to zero. More precisely, we have:
\begin{lemma}
Let $\psi \in C([0, \infty);H^1(\T))$ be a solution of \eqref{GPD} and assume that $PR -\alpha \beta <0$. Then
\[
\| \psi (t, \cdot) \|_{L^2} \le \| \psi_0 \|_{L^2} e^{-\kappa t},\quad \forall t\ge 0,
\]
where $\kappa := \frac{\alpha - PR}{2\beta}>0$.
\end{lemma}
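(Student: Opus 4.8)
The plan is to reduce the statement to a scalar differential inequality for the squared mass $m(t):=\lVert\psi(t,\cdot)\rVert_{L^2}^2$ and to close it via Grönwall's lemma, mirroring the mass computation already carried out in Lemma \ref{lem:mas}. First I would multiply \eqref{GPD} by $\overline{\psi}$, integrate over $\T$, and take the real part of $\int_\T\overline{\psi}\,\partial_t\psi\,dx$. It is convenient to write the right-hand side of \eqref{GPD} as $A\psi+iB\psi$, where $A:=-\tfrac12\partial_x^2+g|\psi|^2+\tfrac{\lambda P}{\beta+R|\psi|^2}$ is a (formally) self-adjoint operator and $B:=\tfrac12\bigl(\tfrac{PR}{\beta+R|\psi|^2}-\alpha\bigr)$ is a real-valued multiplication. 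Since $A$ is self-adjoint, $\int_\T\overline{\psi}\,A\psi\,dx\in\R$ and hence contributes only to the imaginary part of $i\int_\T\overline{\psi}\,\partial_t\psi\,dx$; consequently the dispersive, cubic, and linear reservoir-coupling terms all drop out, leaving the mass balance
\[
\frac{d}{dt}m(t)=\int_\T\Bigl(\frac{PR}{\beta+R|\psi|^2}-\alpha\Bigr)|\psi|^2\,dx.
\]

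The decisive step is then an elementary pointwise bound on the surviving coefficient. Because $\beta+R|\psi|^2\ge\beta>0$, we have $\tfrac{PR}{\beta+R|\psi|^2}\le\tfrac{PR}{\beta}$, so that
\[
\frac{PR}{\beta+R|\psi|^2}-\alpha\le\frac{PR-\alpha\beta}{\beta}<0,
\]
where the strict negativity is exactly the hypothesis $PR-\alpha\beta<0$. Substituting this into the mass balance yields $\tfrac{d}{dt}m(t)\le\tfrac{PR-\alpha\beta}{\beta}\,m(t)$, whence Grönwall's inequality gives $m(t)\le m(0)\,e^{\frac{PR-\alpha\beta}{\beta}t}$; taking square roots produces the asserted estimate $\lVert\psi(t,\cdot)\rVert_{L^2}\le\lVert\psi_0\rVert_{L^2}\,e^{-\kappa t}$ with decay rate $\kappa=\tfrac12\bigl(\alpha-\tfrac{PR}{\beta}\bigr)>0$, positive precisely under the stated condition.

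The only point requiring care, and the place I would spend most of the write-up, is the rigorous justification of the mass identity for a solution that is merely in $C([0,\infty);H^1(\T))$: the differentiation of $m(t)$ and the integration by parts implicit in discarding the $-\tfrac12\partial_x^2\psi$ term are not immediately licit at this regularity. As elsewhere in the paper (cf. the proofs of Lemmas \ref{lem:mas} and \ref{lem:n_inf}), I would first establish the identity for smooth solutions arising from, say, $H^3$ initial data, where all manipulations are justified, and then pass to general $H^1$ data by density together with the continuous dependence on the initial data. Once this approximation argument is in place the remaining estimate is immediate, so the genuine analytic content is confined to the coefficient bound and Grönwall, while the main (if routine) obstacle is the density/regularity justification of the underlying mass balance.
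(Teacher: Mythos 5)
Your proof is correct and follows essentially the same route as the paper: multiply \eqref{GPD} by $\overline{\psi}$, integrate, note that the self-adjoint (dispersive, cubic, and $\lambda$-coupling) terms drop out of the mass balance, bound $\frac{PR}{\beta+R|\psi|^2}\le\frac{PR}{\beta}$ pointwise, and conclude by Gr\"onwall. One remark: the rate you derive, $\kappa=\tfrac12\bigl(\alpha-\tfrac{PR}{\beta}\bigr)=\tfrac{\alpha\beta-PR}{2\beta}$, is what the computation actually yields and is positive precisely when $PR-\alpha\beta<0$, whereas the constant $\tfrac{\alpha-PR}{2\beta}$ printed in the statement appears to be a typo.
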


\begin{proof}
As before, we multiply \eqref{GPD} by $\overline{\psi}$, integrate over the spatial domain, and take the imaginary part, to obtain
\begin{equation*}\label{GPDA}
\frac{1}{2} \, \frac{\partial}{\partial t} \int_\T |\psi|^2 dx = -\frac{1}{2} \, \im \, \int_\T \partial_x^2 \psi \overline{\psi} \, dx 
+ \frac{PR}{2} \int_\T \frac{|\psi|^2}{\beta + R|\psi|^2} \, dx - \frac{\alpha}{2} \int_\T |\psi|^2\, dx.
\end{equation*}
Here, the first term on the right hand side vanishes after an integration by parts, and since $R >0$, we have
\begin{align*}
\frac{\partial}{\partial t} \int_\T |\psi|^2 \, dx &=  PR \int_\T \frac{|\psi|^2}{\beta + R|\psi|^2} \, dx - \alpha \int_\T  |\psi|^2 \, dx \\
&\le \left(\frac{P R}{\beta} - \alpha \right) \int_\T |\psi|^2 \, dx.
\end{align*}
This directly yields the exponential bound stated above. 
\end{proof}

As before, one may look for spatially homogenous solutions, $\psi = \sqrt{\rho} e^{i \phi}$, of \eqref{GPD}, which yields the following ordinary differential equation for the particle density:
\begin{equation}\label{odea}
\dot \rho = \frac{1}{2}\left(\frac{PR}{\beta+ R\rho}-\alpha\right)\rho,\quad \rho|_{t=0} = \rho_0>0.
\end{equation}
This equation can be solved by a lengthy, but straightforward computation, to give:

\begin{lemma} For every $\rho_0>0$, \eqref{odea} admits a unique solution $\rho(t)\in C^1[0,\infty)$ which is positive. 
Moreover, there are two steady states given by
\[
\rho_1^* = 0, \quad \text{and} \quad  \rho_2^*= \frac{PR-\beta \alpha}{R\alpha},
\]
which are consistent with the ones found in Theorem \ref{thm:ODE}.
\end{lemma}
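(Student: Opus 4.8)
The plan is to analyze the scalar autonomous ODE \eqref{odea} directly, since its right-hand side depends only on $\rho$. First I would verify the two asserted steady states by setting $\dot\rho=0$: the factor $\rho$ gives $\rho_1^*=0$, while the vanishing of $\frac{PR}{\beta+R\rho}-\alpha$ gives $\alpha(\beta+R\rho)=PR$, i.e. $\rho_2^*=\frac{PR-\beta\alpha}{R\alpha}$, matching $\rho_1^*$ of Theorem \ref{thm:ODE}. (Note $\rho_2^*>0$ exactly when $PR-\alpha\beta>0$, so in the vanishing-condensate regime $\rho_2^*$ is not a physically admissible positive state; this is the consistency one wants to flag.) The existence and uniqueness of a local $C^1$ solution for $\rho_0>0$ follows from the standard Picard--Lindel\"of theorem, since the vector field $f(\rho)=\frac12\big(\frac{PR}{\beta+R\rho}-\alpha\big)\rho$ is smooth (in fact real-analytic) on $\{\rho>-\beta/R\}$, hence locally Lipschitz near any $\rho_0>0$.

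The key qualitative step is to establish global existence together with strict positivity. For positivity I would argue that $\rho\equiv 0$ is itself a solution through the equilibrium $\rho_1^*=0$, so by uniqueness any trajectory starting at $\rho_0>0$ can never reach $0$ in finite time; thus $\rho(t)>0$ for as long as it is defined. For global existence (no finite-time blow-up), I would derive an a priori upper bound. Since $\rho>0$ and $\beta+R\rho\ge\beta$, one has
\[
\dot\rho=\frac{1}{2}\Big(\frac{PR}{\beta+R\rho}-\alpha\Big)\rho\le\frac{1}{2}\Big(\frac{PR}{\beta}-\alpha\Big)\rho,
\]
so $\rho(t)$ grows at most exponentially and cannot escape to $+\infty$ in finite time; combined with the positivity just shown, the solution remains in a compact subset of $(0,\infty)$ on any finite interval, and the standard continuation criterion yields a unique solution on $[0,\infty)$.

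The main obstacle, if one wants the sharp statement, is really the explicit solvability alluded to by ``a lengthy but straightforward computation.'' I would separate variables, writing
\[
\frac{2(\beta+R\rho)}{(PR-\alpha\beta-\alpha R\rho)\,\rho}\,d\rho=dt,
\]
and then use partial fractions to integrate the left-hand side, obtaining an implicit relation of the form $A\ln\rho+B\ln\lvert PR-\alpha\beta-\alpha R\rho\rvert = t+c$ for suitable constants $A,B$ depending on the parameters. The bookkeeping of these constants and of the sign of $PR-\alpha\beta-\alpha R\rho$ along the trajectory is the only delicate part, but it is not needed for the existence, uniqueness, and positivity claims, which follow from the soft dynamical-systems argument above. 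I would therefore present the ODE-theoretic proof as the main line and relegate the explicit quadrature to a remark, noting that monotonicity of $f$ forces every positive trajectory to converge to whichever of $\rho_1^*,\rho_2^*$ is the stable equilibrium in the relevant parameter regime, consistent with Theorem \ref{thm:ODE}.
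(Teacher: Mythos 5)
Your proof is correct, but it takes a different route from the paper: the paper offers no argument at all for this lemma, merely asserting that the ODE ``can be solved by a lengthy, but straightforward computation,'' i.e.\ it leans entirely on the explicit quadrature that you relegate to a remark. Your soft dynamical-systems argument --- Picard--Lindel\"of for local well-posedness on $\{\rho>-\beta/R\}$, positivity via uniqueness through the equilibrium $\rho\equiv 0$, and global existence from the a priori bound $\dot\rho\le\frac12\bigl(\frac{PR}{\beta}-\alpha\bigr)\rho$ --- is cleaner and arguably more rigorous than the paper's implicit reliance on the separated form, since it sidesteps the bookkeeping of signs and integration constants (and the degenerate case $PR-\alpha\beta=0$, where the partial-fraction decomposition you write down changes form because the two roots of the denominator coincide at $\rho=0$). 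The explicit integration buys a closed-form implicit solution and immediate convergence rates, while your approach buys exactly the existence, uniqueness, and positivity claims of the lemma with minimal computation; your identification of the steady states and the consistency check against Theorem \ref{thm:ODE} (with the indices swapped relative to $\xi_1,\xi_2$) are also correct. The concluding remark about convergence to the stable equilibrium goes beyond what the lemma asserts, but it is sound for a scalar autonomous ODE with bounded positive trajectories.
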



\bibliographystyle{amsplain}


\appendix

\section{Local existence of smooth solutions}\label{sec:LWPH1}

In this appendix, we shall give the proof of Proposition \ref{prop:local}. For simplicity, we 
denote
\[
U(t,x)\equiv 
\left (
\begin{array}{c}
\psi(t,x) \\
n(t,x)
\end{array} \right)
\]
for $t\in [0,\infty)$ and $x \in \T$. Using this, we can rewrite \eqref{GP}-\eqref{ini} in the following form
\begin{equation}\label{GPU}
\partial_t U = AU + f(U), \quad U|_{t=0} =U_0(x),
\end{equation}
with $U_0 = (\psi_0, n_0)^\top$, and 
\begin{equation}\label{A}
A = \left(
\begin{array}{cc}
\frac{i}{2}\partial_x^2 & 0 \\
0 & -\beta
\end{array} \right), 
\end{equation}
as well as
\begin{equation}\label{f}
f(U) = \left(
\begin{array}{cc}
-ig |\psi|^2 \psi -i\lambda n \psi + \frac{1}{2}(Rn-\alpha)\psi \\
P -R|\psi|^2n
\end{array} \right).
\end{equation}
Note that $f(0) = (0,P)^\top$. By means of Duhamel's formula, we can rewrite \eqref{GPU} as an integral equation for $U$, i.e.
\begin{equation} \label{DF}
U(t,x) = e^{tA}U_0(x) + \int_0^t e^{(t-\tau)A}f(U(\tau, x)) \, d\tau\equiv \Phi(U)(t).
\end{equation}
We shall prove that for some (sufficiently small) time $t>0$, $\Phi$ is a contraction mapping on $\mathcal{H}^s = H^s(\T) \oplus H^s(\T)$, provided $s>\frac{1}{2}$.
To this end, the following lemma is key.

\begin{lemma}\label{lem:lip}
For $s>\frac{1}{2}$, the nonlinear map $f:\mathcal{H}^s \rightarrow \mathcal{H}^s$ given by \eqref{f} is locally Lipschitz continuous in $U$, uniformly for $t\in [0, \infty)$.
\end{lemma}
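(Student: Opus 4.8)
The plan is to exploit the fact that, in one space dimension, $H^s(\T)$ is a Banach algebra for every $s>\frac12$: there is a constant $C=C(s)>0$ with
\[
\|uv\|_{H^s}\le C\,\|u\|_{H^s}\|v\|_{H^s},\qquad \|\overline{u}\|_{H^s}=\|u\|_{H^s},
\]
for all $u,v\in H^s(\T)$. This single estimate is the engine of the whole argument. Since every component of $f$ is a polynomial expression in $\psi$, $\overline\psi$ and $n$ (plus the constant $P$), the algebra property shows at once that $f$ indeed maps $\mathcal{H}^s$ into itself, and it will also let me control differences.

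First I would fix $R>0$ and work on the closed ball $B_R=\{U\in\mathcal{H}^s:\|U\|_{\mathcal{H}^s}\le R\}$, taking two arbitrary points $U_j=(\psi_j,n_j)^\top$, $j=1,2$, inside it; the goal is a bound $\|f(U_1)-f(U_2)\|_{\mathcal{H}^s}\le L(R)\,\|U_1-U_2\|_{\mathcal{H}^s}$. I would treat $f$ term by term. The cubic nonlinearity is handled by the standard telescoping identity
\[
|\psi_1|^2\psi_1-|\psi_2|^2\psi_2=(\psi_1-\psi_2)\overline{\psi_1}\psi_1+\psi_2(\overline{\psi_1}-\overline{\psi_2})\psi_1+\psi_2\overline{\psi_2}(\psi_1-\psi_2),
\]
applying the algebra property to each summand and bounding the spectator factors by $R$, which yields a bound by $3C^2R^2\|\psi_1-\psi_2\|_{H^s}$. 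The bilinear terms $n\psi$ and $|\psi|^2n$ are treated identically by adding and subtracting, e.g.
\[
n_1\psi_1-n_2\psi_2=(n_1-n_2)\psi_1+n_2(\psi_1-\psi_2),
\]
each factor again controlled in $H^s$. The linear term $-\tfrac{\alpha}{2}\psi$ is trivially Lipschitz, and the constant $P$ in the second component cancels in the difference.

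Collecting the contributions of both components and summing the resulting $H^s$ norms gives the estimate with a Lipschitz constant $L(R)$ that is a polynomial in $R$, of the form $C_1R^2+C_2R+C_3$, with coefficients depending only on $g,\lambda,R,\alpha,\beta$ and the algebra constant $C$. Because $f$ carries no explicit $t$-dependence, this constant is automatically uniform in $t\in[0,\infty)$, which is the assertion of the lemma. I do not anticipate a genuine obstacle: the only subtlety is the hypothesis $s>\frac12$, which is precisely the threshold guaranteeing the Banach-algebra structure (equivalently $H^s(\T)\hookrightarrow L^\infty(\T)$) that closes every product estimate; below it the cubic term already fails to map $H^s$ into itself.
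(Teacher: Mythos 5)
Your proposal is correct and follows essentially the same route as the paper: restrict to a closed ball in $\mathcal{H}^s$, invoke the Banach-algebra property of $H^s(\T)$ for $s>\tfrac12$, and telescope the polynomial differences (the paper uses the symmetric factorization $\psi n-\phi m=\tfrac12\bigl((\psi+\phi)(n-m)+(n+m)(\psi-\phi)\bigr)$, which is interchangeable with your decomposition). The only cosmetic issue is that you reuse the letter $R$ both for the ball radius and for the scattering-rate parameter already present in $f$; pick a different symbol for the radius.
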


\begin{proof}
Let $U=(\psi, n)^\top$, $V=(\phi, m)^\top$ and suppose that $U,V \in \overline{B_M(0)}\subset \mathcal{H}^s$, for some $M >0$. Then, by triangle inequality
\begin{align*}
& \lVert f(U)-f(V) \rVert_{\mathcal{H}^s}^2= \\
&= \big \lVert -ig(|\psi|^2\psi - |\phi|^2 \phi) -i\lambda (n\psi - m \phi) + \frac{1}{2}[(Rn-\alpha)\psi - (Rm-\alpha)\phi] \big \rVert_{H^s(\mathbb{T})}^2 \\
&\quad + \lVert -R (|\psi|^2n - |\phi|^2m) \rVert_{H^s(\mathbb{T})}^2 \\
&\le g^2 \lVert |\psi|^2\psi - |\phi|^2 \phi \rVert_{H^s(\mathbb{T})}^2 + \lambda^2 \lVert n\psi - m \phi \rVert_{H^s(\mathbb{T})}^2 + \frac{1}{4} R^2 \lVert n\psi - m\phi \rVert_{H^s(\mathbb{T})}^2\\
&\quad  + \frac{1}{4}\alpha^2 \lVert \psi - \phi \rVert_{H^s(\mathbb{T})}^2 + R^2 \lVert (|\psi|^2n - |\phi|^2m) \rVert_{H^s(\mathbb{T})}^2.
\end{align*}
Repeated use of the facts that (i) $H^s(\mathbb{T})$ is an algebra for all $s > 1/2$, and (ii) polynomials of the form $\psi n -\phi m$ can be factored as
\[
\psi n -\phi m = \frac{1}{2}\big((\psi+\phi)(n-m)+(n+m)(\psi-\phi)\big),
\]
together with the assumption $\psi,\phi,n,m \in \overline{B_M (0)}$ yields
\begin{align*}
\lVert f(U)-f(V) \rVert_{\mathcal{H}^s}^2 & \le C_{M} \left( \lVert \psi -\phi \rVert_{H^s(\mathbb{T})}^2 +  \lVert n -m \rVert_{H^s(\mathbb{T})}^2 \right) \\
&= C_M \lVert U-V \rVert_{\mathcal{H}^s}^2,
\end{align*}
where $C_M$ is a constant depending only on $M$.
\end{proof}

With this lemma in hand, we can now give the proof of Proposition \ref{prop:local}:

\begin{proof}
We first prove existence and uniqueness. We define the Banach space $X := C([0,T);\mathcal{H}^s)$, where $T >0$ will be determined below. 
Let $\| U_0\|_{\mathcal H^s} \le M$ and consider the subspace
    \[
    K = \{U \in X : \lVert U \rVert_X \le 2M\},
    \]
    Then $K$ is a closed subspace of $X$, 
    so it is a complete metric space, and we can apply Banach's fixed point theorem, provided 
    $\Phi$ maps $K$ into itself and there exists $\theta \in (0,1)$ such that
    \[
    \lVert \Phi(U) - \Phi(V) \rVert_X \le \theta \lVert U-V \rVert_X, \quad \forall \, U,V \in K.
    \]
    
To this end, we first notice that, since $e^{-\frac{i}{2}t\partial_x^2}$ is a unitary group on every $H^s(\T)$, $s\in \R$, and $\beta >0$, we have 
the following bound on the linear time-evolution generated by \eqref{A}:
\[
\lVert e^{tA}U(t) \rVert_{\mathcal H^s} \le \lVert U(t) \rVert_{\mathcal H^s}, \quad \forall \, t\ge 0.
\]
It follows that
    \begin{align*}
    \lVert \Phi(U) \rVert_X  & = \sup_{0 \le t < T} \left \lVert e^{tA}U_0 + \int_0^t e^{(t-\tau)A}f(U(\tau)) \, d\tau \right \rVert_{\mathcal{H}^s} \\
    &\le \lVert U_0 \rVert_{\mathcal{H}^s} +  T \lVert f(U) \rVert_X,
    \end{align*}
where we have used the triangle inequality and Minkowski's inequality. We now invoke Lemma \ref{lem:lip}, which gives
\begin{equation}\label{Lest}
\lVert f(U)-f(V) \rVert_{X} \le C \lVert U-V \rVert_{X},
\end{equation}
so that, taking $V = 0$, we have, by triangle inequality
\begin{align*}
\lVert f(U) \rVert_{X} \le \| f(u) - f(0)\|_X + \| f(0)\|_X\le \lVert f(0) \rVert_{X} + C \lVert U \rVert_{X}.
\end{align*}
Note that in the case of a constant exciton creation rate $P>0$, we explicitly have:
\begin{align}\label{eq:pump}
    \lVert f(0) \rVert_X & \equiv \sup_{0 \le t \le T} \lVert (0,P)^\top \rVert_{\mathcal H^s} = \lVert P \rVert_{L^2(\T)} = P \sqrt{|\T|}.
    \end{align}
We use this, together with the assumption that $U \in K$, to obtain
    \begin{align*}
    \lVert \Phi(U) \rVert_X &\le M + T(\lVert f(0) \rVert_X + C \lVert U \rVert_X) \\
    &\le M + T\big (P \sqrt{|\T|} + 2CM \big),
    \end{align*}
    where $C$ is a constant depending on $M$. Therefore, choosing
    \[
    T = \frac{M}{2CM + P \sqrt{|\T|}},
    \]
    we have that $\Phi(K) \subseteq K$. To show that $\Phi$ is a contraction on $K$, we again use \eqref{Lest}, to obtain
    \begin{align*}
    \lVert \Phi(U) - \Phi(V) \rVert_{X} &= \sup_{0\le t\le T} \left \lVert \int_0^t e^{(t-\tau)A}(f(U(\tau))-f(V(\tau))) \, d\tau \right \rVert_{\mathcal{H}^s} \\
    &\le CT \lVert U-V \rVert_{X}\\
    &\le \frac{1}{2} \lVert U-V \rVert_X,
    \end{align*}
    with the same $T$ chosen above. Banach's fixed point theorem consequently implies that 
    there exists a unique fixed point $U \in K$ such that $\Phi(U) = U$. This $U$ is the unique solution of \eqref{DF} in $K$.
    
In fact, the solution $U$ is unique in $X$, not only in $K$. This is because our choice of $T$, together with the fact that we have chosen 
$ \lVert U_0 \rVert_{\mathcal{H}^s}\le M$, ensures that any solution $U \in X$ actually belongs to $K$. To see this, let $U$ be a solution of \eqref{DF}. Then we have
    \begin{align*}
    \lVert U \rVert_X &= \sup_{0 \le t \le T} \left \lVert  e^{tA}U_0 + \int_0^t e^{(t-\tau)A}f(U(\tau)) \, d\tau \right \rVert_{\mathcal{H}^s} \\
    &\le \lVert U_0 \rVert_{\mathcal{H}^s} + \sup_{0 \le t \le T} \int_0^t \lVert f(U(\tau))\rVert_{\mathcal{H}^s} \, d\tau \\
    &\le M + \frac{1}{2}\lVert U \rVert_X.
    \end{align*}
    It follows that $\lVert U \rVert_X \le 2M$, so $U \in K$.

Having obtained a unique local solution for $t < T$, we can now extend it (uniquely) to a maximal solution on some time interval $[0,T_{\mathrm{max}})$, 
where either (i) $T_{\mathrm{max}} = \infty$, or (ii) $T_{\mathrm{max}} < \infty$, and $$\lim_{t \rightarrow T_{\mathrm{max}}} \lVert U(t) \rVert_{\mathcal H^s} = \infty,$$ 
since otherwise the solution could be extended, by continuity, past $T_{\mathrm{max}}$, which is a contradiction. 

Finally, 
continuous dependence on initial data follows by a classical Gronwall-argument: Indeed, for $U_0, V_0\in \mathcal H^s$, we find
\begin{align*}
\| U(t, \cdot) - V(t, \cdot) \|_{\mathcal H^s} \le \| U_0 - V_0 \|_{\mathcal H^s}+ C\int_0^t \| U(\tau, \cdot) - V(\tau, \cdot)\|_{\mathcal H^s} \, d\tau,
\end{align*}
where $C>0$ is the same Lipschitz constant as before. Thus 
\[
\| U(t, \cdot) - V(t, \cdot) \|_{\mathcal H^s} \le  \| U_0 - V_0 \|_{\mathcal H^s} e^{Ct}, \quad \forall \, 0\le t \le T< T_{\rm max},
\]
which proves the claim.
\end{proof}

This proof straightforwardly extends to the higher dimensional setting by requiring $s>d/2$.


\end{document}